\documentclass[11pt]{amsart}
\usepackage[margin=1.05in]{geometry}
\usepackage{amsmath,amssymb,amsthm,mathtools}
\usepackage{booktabs,longtable,array}
\usepackage[hidelinks]{hyperref}
\usepackage{enumitem}
\usepackage{etoolbox}

\makeatletter
\patchcmd{\@setaddresses}{\addvspace\bigskipamount}{\addvspace\smallskipamount}{}{}
\patchcmd{\@setaddresses}{\addvspace\bigskipamount}{\addvspace\smallskipamount}{}{}
\makeatother

\newtheorem{theorem}{Theorem}[section]
\newtheorem{proposition}[theorem]{Proposition}
\newtheorem{lemma}[theorem]{Lemma}
\newtheorem{corollary}[theorem]{Corollary}
\theoremstyle{definition}
\newtheorem*{maintheorem}{Main Theorem}
\theoremstyle{remark}
\newtheorem{remark}[theorem]{Remark}
\newtheorem{question}[theorem]{Question}

\newcommand{\Z}{\mathbb Z}
\newcommand{\Q}{\mathbb Q}

\newcommand{\R}{\mathbb R}
\newcommand{\Sp}{\operatorname{Sp}}
\newcommand{\Tor}{\operatorname{Tor}}

\newcommand{\coker}{\operatorname{coker}}
\newcommand{\rk}{\operatorname{rk}}

\title[Higher mixed sums]{Higher Braided Mixed Sums, Intersection Lattices, and Symplectic Four-Manifolds}
\makeatletter
\def\@settitle{\begin{center}%
  \baselineskip14\p@\relax
  \bfseries
  \uppercasenonmath\@title
  \@title\par
  \end{center}}
\makeatother
\author{Anar Akhmedov}
\address{School of Mathematics, University of Minnesota, Minneapolis, MN 55455, USA}
\address{Department of Mathematics, Harvard University, Cambridge, MA 02138, USA}
\email{akhmedov@umn.edu}
\author{Azer Akhmedov}
\address{Department of Mathematics, North Dakota State University, Fargo, ND 58108, USA}
\email{azer.akhmedov@ndsu.edu}
\date{}
\subjclass[2020]{Primary 20E08, 57R17; Secondary 20H10, 57R57, 14J29}
\keywords{amalgamated product, Bass--Serre tree, lattice, symplectic sum, braided multisection, cyclic cover, four-manifold}

\begin{document}
\begin{abstract}
We study mixed symplectic sums along connected unbranched cyclic multisections of degrees $p,q\geq 2$ in ruled surfaces.  With
\[
p(g-1)=q(h-1)=n,
\]
the resulting four-manifold has signature zero and Euler characteristic
\[
e=4n\left(1-\frac1p-\frac1q\right).
\]
We classify the types having the rational cohomology ring of $\#_k(S^2\times S^2)$.  They occur only for odd $k$ and, with $t=(k+1)/2$, $a=g-1$, and $b=h-1$, are exactly the solutions of
\[
n=a+b+t,\qquad a\mid b+t,\qquad b\mid a+t.
\]
For each odd $k$ there are finitely many types; we give an explicit parametrization and tables for $k\leq 15$.  Every type admits an equatorially product-framed symplectic realization and, possibly with different framings, a spin realization whose free integral intersection form is $kH$.  We prove minimality, boundary incompressibility, an integral Mayer--Vietoris presentation, and a regular cyclic cover of degree $\operatorname{lcm}(p,q)$.  The boundary subgroup $\pi_1(\Sigma_G)\times\mathbb Z$ embeds in the fundamental group; this gives a nontrivial Bass--Serre splitting and rules out word-hyperbolicity, cocompact real- or complex-hyperbolic lattices, and torsion-free cocompact irreducible bidisk lattices.  There are five types for $k=3$ and seven for $k=5$.  The $k=1$ cases are treated in \cite{AkhmedovBraided,AkhYeung}.
\end{abstract}

\maketitle
\enlargethispage{3pt}

\section{Introduction}

The use of connected symplectic representatives of multiple-section classes in ruled surfaces goes back to Smith's work on braided symplectic submanifolds \cite{Smith}.  It was used by the first author in \cite{AkhNote}, in particular for a connected genus-three representative of the class $2[\Sigma_2\times\{\mathrm{pt}\}]$, and was developed into the explicit cyclic multisection construction used here in \cite{AkhmedovBraided}.

In \cite{AkhmedovBraided}, mixed fiber sums along explicit cyclic multisections were used to construct symplectic four-manifolds with the rational cohomology of $S^2\times S^2$.  If
\[
F_{g,p}\subset \Sigma_g\times S^2,\qquad F_{h,q}\subset \Sigma_h\times S^2
\]
are connected unbranched multisections of degrees $p$ and $q$, respectively, and have the same genus, then
\[
p(g-1)=q(h-1)=n
\]
and the common genus is $G=n+1$.  The mixed sum
\[
Z^{p,q}_{g,h}(\varphi)=
(\Sigma_g\times S^2\setminus \operatorname{int}\nu F_{g,p})
\cup_{\varphi}
(\Sigma_h\times S^2\setminus \operatorname{int}\nu F_{h,q})
\]
has
\[
e(Z)=4n\left(1-\frac1p-\frac1q\right),\qquad \sigma(Z)=0.
\]
The condition $e(Z)=4$ leads to the three possibilities
\[
(2,3;4,3;7),\qquad (2,4;3,2;5),\qquad (3,3;2,2;4),
\]
where we list $(p,q;g,h;G)$.  For suitable gluings, all three have the rational cohomology ring of $S^2\times S^2$.  The same paper asked for the higher mixed sums having the rational cohomology ring of
\[
\#_k(S^2\times S^2).
\]

The relation with the earlier constructions is as follows.  The paper
\cite{AkhNote} supplied an earlier use of connected multiple-section
classes in ruled surfaces in symplectic-sum constructions, while
\cite{AkhGokova} constructed minimal symplectic manifolds with the integral
cohomology of $S^2\times S^2$ by different surgery and twisted-sum methods.
The explicit root multisections, their complement topology, and the three
$k=1$ mixed sums were established in \cite{AkhmedovBraided}.  The new
points here are the complete higher numerical classification, the
rectangular Mayer--Vietoris realization for every admissible type, the
general spin and lattice theorem, and the higher cyclic-cover and
Bass--Serre analysis.  The spin and cyclic-cover viewpoint for the three
smallest cases is developed jointly in \cite{AkhYeung}; here it is extended
to every higher numerical type.

The higher problem posed in \cite{AkhmedovBraided} asks which mixed sums can have the rational cohomology ring of
\[
\#_k(S^2\times S^2),\qquad k\geq 3.
\]
The classification below is complete for every $k$, rather than only for a bounded range.  If
\[
t=\frac{k+1}{2},\qquad a=g-1,\qquad b=h-1,
\]
then the mixed-sum equation becomes
\[
n=a+b+t,
\]
and the covering degrees are integral precisely when
\[
a\mid b+t,\qquad b\mid a+t.
\]
Thus the geography problem becomes a finite divisibility problem, and no numerical type occurs for even $k$.

The realization theorem shows that every numerical type allowed by the divisibility conditions admits an equatorially product-framed symplectic gluing with vanishing rational first homology, and hence is represented by a symplectic four-manifold with the rational cohomology ring of $\#_k(S^2\times S^2)$.  For $k>1$ the Mayer--Vietoris surface map is rectangular, so the determinant condition of the smallest case is replaced by a maximal-rank condition.

The topology surviving the gluing is controlled by the complement
\[
M_{g,p}=(\Sigma_g\times S^2)\setminus\operatorname{int}\nu F_{g,p}
\]
which is a $p$-punctured-sphere bundle over $\Sigma_g$ and satisfies
\[
H_1(M_{g,p};\mathbb Z)\cong\mathbb Z^{2g}\oplus\mathbb Z/p.
\]
Section~5 gives the integral Mayer--Vietoris presentation for the higher sums.  The boundary is incompressible, so every direct sum contains
\[
\pi_1(\Sigma_G)\times\mathbb Z
\]
in its fundamental group.  In particular, changing the boundary gluing alone cannot make a direct sum simply connected.  This subgroup also gives the real-hyperbolic and irreducible bidisk-lattice obstructions discussed later.

The spin argument developed in \cite{AkhYeung} extends to every higher numerical type.  After choosing spin-compatible symplectic normal framings, the gluing can be chosen so that $H_1(Z;\mathbb Q)=0$ and $w_2(Z)=0$.  These framings need not be the equatorial framings used for the split integral formula in Section~5.  The free integral intersection lattice is then
\[
Q_{Z,\mathrm{free}}\cong kH.
\]
All the symplectic direct sums considered here are minimal.  We also construct a cyclic cover of degree $\operatorname{lcm}(p,q)$ after fixing the cyclic characters and their embeddings in the deck group, and determine the numerical restrictions on any orientation-compatible complex structure.

The main statements are collected in the following theorem.

\begin{maintheorem}
Let $p,q\geq 2$ and $g,h\geq 2$, and suppose
\[
p(g-1)=q(h-1)=n.
\]
A direct cyclic mixed sum can have the rational cohomology ring of $\#_k(S^2\times S^2)$ only when $k$ is odd.  For odd $k$, put $t=(k+1)/2$, $a=g-1$, and $b=h-1$.  Then the numerical types are exactly the solutions of
\[
n=a+b+t,\qquad a\mid b+t,\qquad b\mid a+t.
\]
For every such type there is a minimal equatorially product-framed symplectic realization $Z$ with
\[
H^*(Z;\mathbb Q)\cong H^*(\#_k(S^2\times S^2);\mathbb Q).
\]
After a possibly different choice of spin-compatible symplectic normal framings, the type also admits a spin realization, product-framed relative to those framings, for which $Q_{Z,\mathrm{free}}\cong kH$.  For every direct gluing,
\[
\pi_1(\Sigma_G)\times\mathbb Z\hookrightarrow\pi_1(Z).
\]
\end{maintheorem}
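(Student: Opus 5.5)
The proof splits into four parts: a numerical classification, a Mayer--Vietoris rank argument, a spin/lattice refinement, and a $\pi_1$ injectivity argument via incompressibility.

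\emph{Numerical classification.} We start from $\sigma(Z)=0$ and $e(Z)=4n(1-1/p-1/q)$, both computed additively over the two complements glued along $\Sigma_G\times S^1$. Rational cohomology of $\#_k(S^2\times S^2)$ forces $b_1=b_3=0$ and $b_2=2k$, so $e(Z)=2+2k$. Substituting $p=n/a$ and $q=n/b$ gives
\[
4(n-a-b)=2k+2, \qquad\text{i.e.}\qquad n=a+b+t \quad\text{with } t=(k+1)/2 .
\]
So $k$ must be odd. Integrality of $p=n/a=1+(b+t)/a$ is exactly the condition $a\mid b+t$, and similarly $q\in\mathbb Z$ is exactly $b\mid a+t$. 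Conversely, each solution gives $p,q\ge 2$, because $b+t\ge a$ follows from divisibility with $b+t>0$. The multisections exist by the root-multisection construction of \cite{AkhmedovBraided}, with genus $G=n+1$ by Riemann--Hurwitz for an unbranched degree-$p$ cover. Finiteness follows because $a\le b+t$ and $b\le a+t$, together with the divisibility, bound $a$ and $b$ in terms of $t$. This is routine.

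\emph{Realization.} We use the Mayer--Vietoris sequence for $Z=M_{g,p}\cup M_{h,q}$, with
\[
H_1(M_{g,p};\mathbb Z)\cong\mathbb Z^{2g}\oplus\mathbb Z/p .
\]
Rationally, $H_1(Z;\mathbb Q)$ is the cokernel of
\[
H_1(\Sigma_G\times S^1;\mathbb Q)=\mathbb Q^{2G+1}\to\mathbb Q^{2g}\oplus\mathbb Q^{2h}.
\]
The $S^1$ (meridian) class dies rationally on both sides, since it is torsion of order $p$ and $q$ respectively. The surface part maps by the covering transfers $\pi_*\colon H_1(\Sigma_G)\to H_1(\Sigma_g)$ and $H_1(\Sigma_G)\to H_1(\Sigma_h)\circ\varphi_*$, and each is surjective rationally. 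The target has dimension $2g+2h=2a+2b+4\le 2n+2=2G$, since $t\ge 1$; this is the rectangular situation mentioned in the introduction.

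The main obstacle is to choose the gluing diffeomorphism $\varphi$ (an equatorial product $\psi\times\mathrm{id}$, with $\psi$ a surface diffeomorphism) so that the combined map $(\pi_{g*},\pi_{h*}\psi_*)\colon\mathbb Q^{2G}\to\mathbb Q^{2g+2h}$ has full rank. The plan is to identify $\ker\pi_{g*}$, which has dimension $2G-2g\ge 2h$. Then we choose $\psi$ via a symplectic basis, using realizability of $\Sp(2G,\mathbb Z)$ by mapping classes, so that $\psi_*^{-1}$ carries a complement of $\ker\pi_{h*}$ into a subspace meeting $\ker\pi_{g*}$ transversally; a generic element of $\Sp$ achieves this because the bad set is Zariski-closed and proper. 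Once $b_1=0$, duality gives $b_3=0$ and $b_2=2k$ from the Euler characteristic. Since $\sigma=0$, the form is odd or even indefinite of rank $2k$. The rational ring matches $kH$ over $\mathbb Q$ once we know there is an isotropic half, and indefinite rational forms of signature zero and the same discriminant class are isometric to $k\langle1\rangle\oplus k\langle-1\rangle\cong_{\mathbb Q}kH$. I will check the discriminant via Hasse invariants; this is a mild secondary obstacle.

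\emph{Spin realization, minimality, and $\pi_1$.} For the spin realization we cite the spin argument of \cite{AkhYeung}. Spin-compatible normal framings on both sides make $w_2$ vanish, so the form is even, unimodular on the free part, and of signature zero, hence $kH$ by the classification of indefinite even unimodular forms. The same genericity in $\psi$ still gives $b_1=0$. Minimality follows from Usher's theorem on symplectic sums, since neither summand's complement contains an exceptional sphere, the genus being $\ge 2$. Finally, $\Sigma_G\times S^1$ is $\pi_1$-injective in each $M$: the fiber punctured sphere has free $\pi_1$, and the meridian has infinite order because the multisection is connected with a boundary torus. Van Kampen then gives an amalgamated product into which both factors, and hence $\pi_1(\Sigma_G)\times\mathbb Z$, inject.
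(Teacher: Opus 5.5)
Your numerical classification is the paper's argument, and the outline of the realization step also matches: Mayer--Vietoris, a Zariski-open maximal-rank locus, and an integral matrix realized by a mapping class. The gaps are in the steps you call generic or routine.

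\textbf{The rank step.} Since $\Theta=(\pi_{g*},-\pi_{h*}\psi_*)$ has kernel $\ker\pi_{g*}\cap\psi_*^{-1}\ker\pi_{h*}$, surjectivity means exactly $\ker\pi_{g*}+\psi_*^{-1}\ker\pi_{h*}=V$. Your condition (a pulled-back complement of $\ker\pi_{h*}$ meeting $\ker\pi_{g*}$ transversally) is not equivalent to this. More importantly, that the bad locus is a \emph{proper} closed subset is the whole content of the step. It does not follow from closedness, because $\Sp(V)$ is not transitive on subspaces of a fixed dimension; you must exhibit one good $T$. The paper shows that $\ker q_*=(\operatorname{im}q^!)^{\perp}$ is a symplectic subspace (transfer, $q_*q^!=p$; Lemma~\ref{lem:covermap}). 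It then positions the two symplectic kernels by Lemma~\ref{lem:sympposition}. Alternatively, $(\ker\pi_{g*})^\perp$ and $(\ker\pi_{h*})^\perp$ have total dimension $2g+2h\le 2G$, so a dimension count using transitivity of $\Sp(V)$ on nonzero vectors makes them disjoint after a generic $T$. You also need Borel density to put an integral matrix in the open set, and Moser to make $\psi$ area-preserving.

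\textbf{The spin step.} This omission is more serious. Spin-compatible framings on the two sides do not by themselves give $w_2(Z)=0$. The complement spin structures glue only if $\psi$ carries the induced quadratic refinement on one copy of $\Sigma_G$ to the one on the other. That requires equal Arf invariants, which the paper arranges via Lemmas~\ref{lem:spinpull} and~\ref{lem:spinframing}. It then confines $\psi_*$ to a translate $T_0\Gamma(2)$, where $T_0$ comes from transitivity of $\Sp(2G,\Z/2)$ on refinements of fixed Arf invariant and surjectivity of reduction mod $2$. Only because $T_0\Gamma(2)$ is Zariski dense does ``the same genericity'' give $b_1=0$. The spin argument you cite treats the $k=1$ cases, and this coset step is precisely what extends it.

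\textbf{Incompressibility.} Infinite order of the meridian does not give injectivity of $\pi_1(\Sigma_G)\times\Z$: it says nothing about the surface factor or about mixed elements $(\gamma,m^j)$. Your stated reason is also not valid: the boundary is $\Sigma_G\times S^1$, not a torus, and the meridian has order $p$ in $H_1$. The correct reason is that $\pi_1(P_p)\hookrightarrow\pi_1(M_{g,p})$ since $\pi_2(\Sigma_g)=0$, and a boundary loop is nontrivial in the free group $\pi_1(P_p)$ for $p\ge2$. What is missing is the base. With normal disks in the sphere fibers, $\partial\nu F\to M_{g,p}\to\Sigma_g$ factors as $\partial\nu F\to F\xrightarrow{q}\Sigma_g$. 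This gives a map of extensions from $1\to\Z\to\pi_1(\Sigma_G)\times\Z\to\pi_1(\Sigma_G)\to1$ to $1\to\pi_1(P_p)\to\pi_1(M_{g,p})\to\pi_1(\Sigma_g)\to1$. Its outer maps are injective ($q_*$ is injective for a covering), so the middle map is injective. That is a valid, slightly more direct alternative to the paper's route. The paper instead trivializes $q^*M_{g,p}\cong\Sigma_G\times P_p$ (Lemma~\ref{lem:productcover}) and lifts the boundary homeomorphically to $\Sigma_G\times(\text{boundary circle})$.

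\textbf{Minor points.} For minimality you check only the exceptional-sphere alternative in Usher's theorem. Since both summands are $S^2$-bundles, the relevant alternative is the ruled one (sum surface a section). It is excluded because $F_{g,p}$ has degree $p\ge2$ and genus $p(g-1)+1>g$. The Hasse-invariant detour is unnecessary, and signature plus discriminant do not classify rational forms anyway. The form on $H_2/\Tor$ is unimodular, hence $k\langle1\rangle\oplus k\langle-1\rangle$ or $kH$, and both are rationally $kH$.
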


For $k=3$ there are five numerical types, and for $k=5$ there are seven.  We study the $k=3$ cases in detail, concentrating on the symmetric degree pairs $(4,4)$ and $(3,3)$ and the asymmetric pair $(6,2)$.  For each we describe the complements, the integral Mayer--Vietoris map, the spin lattice, and the associated cyclic cover.  The $(3,3)$ and $(6,2)$ examples have the same gluing genus but different complement homology and lifted decompositions; the gluing genus alone does not determine the higher type.

Boundary incompressibility also connects the construction with geometric group theory.  The direct sum is a nontrivial amalgamated product over $\pi_1(\Sigma_G)\times\mathbb Z$, and each vertex group has a finite-index subgroup $\pi_1(\Sigma_G)\times F_r$.  The associated Bass--Serre action has an edge group containing $\mathbb Z^2$, which excludes cocompact real- and complex-hyperbolic lattices; a separate centralizer argument excludes torsion-free cocompact irreducible bidisk lattices.  Section~6 compares this splitting with the real-hyperbolic hybrids of Gromov--Piatetski-Shapiro and with Deligne--Mostow lattices and complex-hyperbolic hybrid constructions.  The comparison concerns their graph-of-groups structure, not their lattice constructions.

The paper is organized as follows.  Section 2 recalls the cyclic multisections and the mixed sum.  Section 3 solves the higher numerical classification problem.  Sections 4 and 5 develop the rational and integral Mayer--Vietoris calculations.  Section 6 proves boundary incompressibility, describes the Bass--Serre splitting, and establishes the lattice obstructions; Section 7 proves minimality.  Section 8 gives the spin realization theorem and the free intersection lattice.  Sections 9 and 10 treat cyclic covers associated to the mixed-sum presentation and the twisted ruled analogue.  Section 11 studies the first two higher levels, with detailed treatment of three $k=3$ models.  Section 12 gives integral and complex-geometric consequences.  The final section records the remaining questions, and Appendix A contains the complete numerical list through $k=15$.

\section{Cyclic multisections and the mixed sum}

We recall the construction in the form needed below.  Let $g\geq 1$ and choose a primitive class $\xi\in H^1(\Sigma_g;\Z)$.  Represent it by a smooth map
\[
u:\Sigma_g\longrightarrow S^1.
\]
Identify $S^1$ with an equator in $S^2$.  For $p\geq 2$, set
\[
F_{g,p}=\{(x,z)\in \Sigma_g\times S^1:z^p=u(x)\}.
\]

\begin{proposition}\label{prop:multi}
The surface $F_{g,p}$ is connected, embedded, and symplectic.  The projection $F_{g,p}\to \Sigma_g$ is an unbranched cyclic cover of degree $p$, and
\[
[F_{g,p}]=p[\Sigma_g\times\{\mathrm{pt}\}],\qquad F_{g,p}^2=0,
\]
while
\[
g(F_{g,p})=p(g-1)+1.
\]
\end{proposition}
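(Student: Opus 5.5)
We view $F_{g,p}$ as the preimage of the diagonal under a covering-type map and read off all properties from that description. First, embeddedness and the covering structure. Consider $\Phi:\Sigma_g\times S^1\to S^1$, $\Phi(x,z)=z^p\,u(x)^{-1}$. Then $F_{g,p}=\Phi^{-1}(1)$. Since $z\mapsto z^p$ is a submersion, $\partial\Phi/\partial z\neq 0$ everywhere, so $1$ is a regular value. Hence $F_{g,p}$ is a closed embedded surface in the $3$-manifold $\Sigma_g\times S^1\subset\Sigma_g\times S^2$.

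For each $x$, the fiber over $x$ is the set of $p$ distinct $p$-th roots of $u(x)$. Locally on $\Sigma_g$ we can choose a continuous branch of $u^{1/p}$, which trivializes the projection. So $F_{g,p}\to\Sigma_g$ is an unbranched $p$-fold cover. The deck action $z\mapsto \zeta_p z$ acts freely and transitively on fibers, so the cover is regular cyclic. It corresponds to the kernel of $\pi_1(\Sigma_g)\to\Z\to\Z/p$, where the first map is $\xi=u_*$.

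Connectedness follows because the monodromy of a loop $\gamma$ is rotation by $\zeta_p^{\xi(\gamma)}$. Primitivity of $\xi$ gives a loop with $\xi(\gamma)=1$, so the monodromy surjects onto $\Z/p$ and $F_{g,p}$ is connected. Riemann--Hurwitz for an unbranched cover gives $\chi(F_{g,p})=p(2-2g)$, hence $g(F_{g,p})=p(g-1)+1$.

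Next, the homology class and self-intersection. The projection to $\Sigma_g$ has degree $p$ on $F_{g,p}$, so pairing with the fiber class $[\{\mathrm{pt}\}\times S^2]$ gives $p$. Pairing with the section class $[\Sigma_g\times\{\mathrm{pt}\}]$ is computed by taking a point of $S^2$ off the equator, where the intersection is empty, so this pairing is $0$. Since $H_2(\Sigma_g\times S^2)$ is spanned by these two classes, $[F_{g,p}]=p[\Sigma_g\times\{\mathrm{pt}\}]$. Then $F_{g,p}^2=p^2[\Sigma_g]^2=0$.

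Symplecticity is the main point. Take the product form $\omega=\omega_\Sigma\oplus\omega_{S^2}$. Locally $F_{g,p}$ is the graph of a branch $s=u^{1/p}:U\to S^1\subset S^2$. The restriction of $\omega$ to a graph is $\omega_\Sigma+s^*\omega_{S^2}$. Since $s$ takes values in the $1$-dimensional equator, $s^*\omega_{S^2}=0$. So $\omega|_{F_{g,p}}$ pulls back to $\pi^*\omega_\Sigma$, which is positive because $\pi|_F$ is a local diffeomorphism. Hence $F_{g,p}$ is symplectic for every product form, with no rescaling needed. The only care required is orientation: we orient $F_{g,p}$ by pullback from $\Sigma_g$, which is consistent with the degree count above.
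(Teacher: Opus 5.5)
Your proof is correct and follows essentially the same route as the paper. The regular-value argument for $\Phi$ is the paper's observation that the fiber product with the local diffeomorphism $z\mapsto z^p$ is smooth even over critical points of $u$; connectedness via primitivity, Riemann--Hurwitz, the intersection numbers with a sphere fiber and with an off-equator section, and the vanishing of the $S^2$-area term on a surface whose vertical tangent direction lies in the equator all match the paper's argument, with your monodromy computation and orientation remark simply spelling out steps the paper states briefly.
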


\begin{proof}
The equation $z^p=u(x)$ is the pullback of the standard degree-$p$ covering $S^1\to S^1$.  Since $z\mapsto z^p$ is a local diffeomorphism, the fiber product is a smooth embedded surface even over critical points of $u$.  Primitivity of $\xi$ makes the induced character onto $\Z/p$, hence the pullback cover is connected.  If
\[
[F_{g,p}]=a[\Sigma_g\times\{\mathrm{pt}\}]+b[\{\mathrm{pt}\}\times S^2],
\]
intersection with a sphere fiber gives $a=p$, while intersection with a section whose $S^2$-coordinate is disjoint from the equator gives $b=0$.  Thus $[F_{g,p}]=p[\Sigma_g\times\{\mathrm{pt}\}]$ and $F_{g,p}^2=0$.  The genus formula is Riemann--Hurwitz for an unbranched cover.  For a product symplectic form, the vertical component of the tangent plane lies in the one-dimensional tangent space of the equator.  The $S^2$-area term therefore restricts trivially, while the pullback of the area form on the base is positive.  Hence $F_{g,p}$ is symplectic.
\end{proof}

Suppose now that $F_{g,p}$ and $F_{h,q}$ have the same genus.  Then
\begin{equation}\label{eq:n}
p(g-1)=q(h-1)=n,
\end{equation}
and the common genus is
\[
G=n+1.
\]
Choose an orientation-preserving diffeomorphism of the two multisections and reverse the normal-circle orientation.  The resulting symplectic sum will be denoted
\[
Z^{p,q}_{g,h}(\varphi).
\]

\begin{proposition}\label{prop:geo}
The mixed sum satisfies
\[
e(Z^{p,q}_{g,h})=4n\left(1-\frac1p-\frac1q\right),\qquad
\sigma(Z^{p,q}_{g,h})=0.
\]
\end{proposition}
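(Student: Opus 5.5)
Both invariants follow from the standard additivity formulas for a symplectic (Gompf) sum along surfaces of self-intersection zero.

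\emph{Euler characteristic.} Let $X,Y$ be closed four-manifolds and $F\subset X$, $F'\subset Y$ surfaces of common genus $G$ and square zero. Then
\[
e(X\#_{F=F'}Y)=e(X)+e(Y)-2e(F).
\]
This follows from additivity of $e$ over the decomposition
\[
X=(X\setminus\nu F)\cup\nu F,
\]
since $e(\nu F)=e(F)$ and the gluing region $F\times S^1$ has Euler characteristic zero. Applying it with $X=\Sigma_g\times S^2$ and $Y=\Sigma_h\times S^2$ gives
\[
e(Z)=2(2-2g)+2(2-2h)-2(2-2G).
\]
Now substitute $g-1=n/p$, $h-1=n/q$ and $G-1=n$ from \eqref{eq:n} and Proposition~\ref{prop:multi}. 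This yields
\[
e(Z)=-4\frac np-4\frac nq+4n=4n\Bigl(1-\frac1p-\frac1q\Bigr),
\]
as claimed.

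\emph{Signature.} Novikov additivity for gluing along the closed $3$-manifold $F\times S^1$ gives
\[
\sigma(Z)=\sigma(M_{g,p})+\sigma(M_{h,q}).
\]
Because $F^2=0$, the tubular neighborhood $\nu F\cong F\times D^2$ has zero signature. Applying Novikov additivity once more, to $\Sigma_g\times S^2=M_{g,p}\cup\nu F$, gives
\[
\sigma(M_{g,p})=\sigma(\Sigma_g\times S^2)=0.
\]
The same argument gives $\sigma(M_{h,q})=0$, so $\sigma(Z)=0$. Neither invariant depends on the gluing $\varphi$, because both formulas only use the pieces and not how they are identified.

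\emph{Main subtlety.} The one point that needs checking is the normal bundle. The sum must be formed with trivializations of $\nu F_{g,p}$ and $\nu F_{h,q}$, which requires the normal Euler numbers to be opposite. Here both vanish by Proposition~\ref{prop:multi}, since $F_{g,p}^2=F_{h,q}^2=0$. So both normal bundles are trivial, the gluing region is really $F\times S^1$, and the additivity arguments above apply without correction terms.
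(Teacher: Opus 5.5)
Your proof is correct and follows the paper's argument: the fiber-sum Euler characteristic formula $e(X)+e(Y)-2e(F)$ with the substitutions $g-1=n/p$, $h-1=n/q$, $G-1=n$, and Novikov additivity for the signature. The only difference is that you spell out the decomposition details (the Euler characteristic of $\nu F$ and of $F\times S^1$, and $\sigma(F\times D^2)=0$) that the paper leaves implicit.
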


\begin{proof}
Since $g=1+n/p$, $h=1+n/q$, and $e(\Sigma_G)=2-2G=-2n$, the fiber-sum formula gives
\begin{align*}
e(Z)&=(4-4g)+(4-4h)-2(2-2G)\\
&=4n\left(1-\frac1p-\frac1q\right).
\end{align*}
Both product ruled surfaces have signature zero, so Novikov additivity gives $\sigma(Z)=0$.
\end{proof}

We fix the gluing terminology used below.  After choosing oriented normal
framings, write \(\partial M_{g,p}=F_{g,p}\times S^1\).  A
\emph{direct gluing} is a boundary
diffeomorphism which identifies the two multisections by an
orientation-preserving diffeomorphism and reverses the orientation of the
normal circle.  In homology it has the form
\[
 (v,r)\longmapsto (Sv,\lambda(v)-r),
\]
where \(S\in\Sp(2G,\mathbb Z)\) and
\(\lambda:H_1(\Sigma_G;\mathbb Z)\to\mathbb Z\) records a change of normal
framing relative to the specified trivializations.  A direct gluing is
\emph{product-framed} relative to those framings when \(\lambda=0\).
Every integral framing term is represented by a map to
$SO(2)=U(1)$ and hence by a symplectic normal gauge transformation.
After the areas of the two multisections have been matched, these boundary
maps may be used in the symplectic sum construction \cite{Gompf}: an
orientation-preserving surface diffeomorphism may be isotoped, after
matching the area forms, to an area-preserving one.  Unless a framing
change is explicitly mentioned, all realization results below use the
product framing.  Section~5 uses the adapted equatorial framings, whereas
Section~8 allows spin-compatible symplectic framings.

\section{The higher classification equation}

We begin with the numerical classification.

\begin{theorem}\label{thm:classification}
Assume $k\geq1$, $p,q\geq 2$, $g,h\geq 2$, and \eqref{eq:n}.  Suppose that a mixed sum has the rational cohomology ring of $\#_k(S^2\times S^2)$.  Then $k$ is odd.  Put
\[
t=\frac{k+1}{2},\qquad a=g-1,\qquad b=h-1.
\]
Then
\begin{equation}\label{eq:class}
n=a+b+t,
\end{equation}
and
\begin{equation}\label{eq:div}
a\mid b+t,\qquad b\mid a+t.
\end{equation}
Conversely, every pair of positive integers $(a,b)$ satisfying \eqref{eq:div}, with $n$ defined by \eqref{eq:class}, determines integers
\[
p=\frac na,\qquad q=\frac nb,
\]
with $p,q\geq 2$, and the resulting mixed sum has
\[
e=2+2k,\qquad \sigma=0.
\]
\end{theorem}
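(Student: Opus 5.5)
The argument is purely numerical: it combines Proposition~\ref{prop:geo} with the Euler characteristic of $\#_k(S^2\times S^2)$ and then rewrites the resulting equation in terms of $a,b,t$. The one topological input is that the rational cohomology ring of $\#_k(S^2\times S^2)$ gives $b_1=b_3=0$ and $b_2=2k$. Hence
\[
e(Z)=2+2k
\]
is forced. By Proposition~\ref{prop:geo}, together with $n/p=g-1=a$ and $n/q=h-1=b$, we have
\[
e(Z)=4n\Bigl(1-\tfrac1p-\tfrac1q\Bigr)=4(n-a-b).
\]
Setting $4(n-a-b)=2+2k$ gives $n-a-b=(k+1)/2$. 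Since the left side is an integer, $k$ must be odd, and with $t=(k+1)/2$ this is exactly \eqref{eq:class}. The signature condition is automatic, because $\sigma(Z)=0$ by Proposition~\ref{prop:geo}, and this matches $\sigma(\#_k(S^2\times S^2))=0$.

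For the divisibility conditions, \eqref{eq:n} gives $a\mid n$ and $b\mid n$. Substituting $n=a+b+t$, we get $a\mid n$ if and only if $a\mid b+t$, and $b\mid n$ if and only if $b\mid a+t$. This proves \eqref{eq:div}.

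For the converse, take positive integers $a,b$ satisfying \eqref{eq:div} and define $n=a+b+t$. Then $a\mid n$ and $b\mid n$, so $p=n/a$ and $q=n/b$ are integers. Because $b+t\geq 1$, we have $n>a$, so $p\geq 2$, and similarly $q\geq 2$. Put $g=a+1\geq 2$ and $h=b+1\geq 2$. Then $p(g-1)=q(h-1)=n$, so \eqref{eq:n} holds. Proposition~\ref{prop:multi} supplies the multisections $F_{g,p}$ and $F_{h,q}$, both of common genus $n+1$, so the mixed sum is defined. Reading the computation above backwards, Proposition~\ref{prop:geo} gives $e=4(n-a-b)=4t=2+2k$ and $\sigma=0$.

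No step is a real obstacle. The only point needing care is the first one: the converse claims only the values of $e$ and $\sigma$, not the full rational cohomology ring, so the argument uses just the Euler characteristic consequence of the hypothesis. Realizing the actual cohomology ring is deferred to the later Mayer--Vietoris sections.
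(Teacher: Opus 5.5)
Your proposal is correct and follows essentially the same route as the paper. Like the paper, you equate $4(n-a-b)$ from Proposition~\ref{prop:geo} with $2+2k$ to get oddness of $k$ and \eqref{eq:class}, translate integrality of $p,q$ into \eqref{eq:div}, and for the converse use $n>a,b$ to get $p,q\geq 2$ and $e=4t$; your explicit appeal to Proposition~\ref{prop:multi} to show the sum is defined is a harmless addition.
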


\begin{proof}
The connected sum $\#_k(S^2\times S^2)$ has Euler characteristic $2+2k$ and signature zero.  Proposition~\ref{prop:geo} therefore gives
\[
4n\left(1-\frac1p-\frac1q\right)=2+2k.
\]
Since
\[
n\left(1-\frac1p-\frac1q\right)=n-a-b\in\mathbb Z,
\]
the left side is divisible by four.  Thus $2+2k$ is divisible by four and $k$ is odd.  Writing $t=(k+1)/2$ gives
\[
n-\frac np-\frac nq=t.
\]
Since $a=n/p$ and $b=n/q$, this is exactly \eqref{eq:class}.  The conditions $p,q\in\Z$ are equivalent to $a\mid n$ and $b\mid n$.  Using $n=a+b+t$, these become \eqref{eq:div}.

Conversely, if \eqref{eq:div} holds, then $a\mid n$ and $b\mid n$, so $p=n/a$ and $q=n/b$ are integers.  Since $n>a,b$, both are at least two.  Substitution in Proposition~\ref{prop:geo} gives $e=4t=2+2k$.
\end{proof}

The divisibility form gives an explicit finite parametrization.

\begin{theorem}\label{thm:param}
Fix $t\geq 1$.  Up to interchanging the two summands, assume $a\leq b$ and write
\[
b=a+d,\qquad d\geq 0.
\]
Then $0\leq d\leq t$.  If $0\leq d<t$, all solutions are obtained by choosing $a>0$ such that
\begin{equation}\label{eq:param1}
a\mid t+d,\qquad a+d\mid t-d.
\end{equation}
If $d=t$, all solutions are obtained by choosing
\begin{equation}\label{eq:param2}
a\mid 2t,
\end{equation}
and setting $b=a+t$.
In particular, for each fixed odd $k$ there are only finitely many numerical mixed-sum types.
\end{theorem}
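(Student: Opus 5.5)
We start from the divisibility conditions \eqref{eq:div} of Theorem~\ref{thm:classification}, $a\mid b+t$ and $b\mid a+t$ with $a,b\geq1$. The plan is to substitute $b=a+d$ with $d\geq0$ (we may assume $a\leq b$, since the conditions are symmetric under $a\leftrightarrow b$), bound $d$, and then rewrite each divisibility in terms of $a$, $d$, $t$.

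\textbf{Bounding $d$.} Since $b\mid a+t$ and $a+t>0$, we get $b\leq a+t$, that is, $d\leq t$.

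\textbf{Reducing the conditions.} Substituting $b=a+d$ gives:
\begin{itemize}[leftmargin=*]
\item $a\mid b+t=a+d+t$, which is equivalent to $a\mid t+d$.
\item $a+d\mid a+t$. Since $a+t=(a+d)+(t-d)$, this is equivalent to $a+d\mid t-d$.
\end{itemize}
This gives exactly \eqref{eq:param1}.

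\textbf{Splitting on $d$.}
\begin{itemize}[leftmargin=*]
\item If $d<t$, then $t-d>0$. Both conditions are genuine divisibilities of positive integers, and the solutions are as stated.
\item If $d=t$, the second condition $a+t\mid 0$ holds automatically. The first becomes $a\mid 2t$, which is \eqref{eq:param2}, with $b=a+t$.
\end{itemize}
Conversely, any $a$ satisfying these conditions gives back \eqref{eq:div}, because each reduction step above is an equivalence.

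\textbf{Finiteness.} Fix $k$, so $t$ is fixed.
\begin{itemize}[leftmargin=*]
\item The parameter $d$ ranges over the finite set $\{0,\dots,t\}$.
\item For each $d$, the value $a$ divides the positive integer $t+d$, so there are finitely many choices of $a$. In the case $d=t$, $a$ divides $2t$.
\end{itemize}
Then $b=a+d$ and $n=a+b+t$ are determined. Theorem~\ref{thm:classification} then recovers $p=n/a$ and $q=n/b$, and so the numerical type $(p,q;g,h)$. Hence there are finitely many types.

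There is no real obstacle. The only point needing care is the degenerate case $d=t$, where the divisibility $a+d\mid t-d$ becomes vacuous. For that reason we state it separately instead of relying on $a+d\leq t-d$.
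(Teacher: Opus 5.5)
Your proof is correct and follows essentially the same route as the paper: substitute $b=a+d$, reduce the conditions to $a\mid t+d$ and $a+d\mid t-d$, and treat the degenerate case $d=t$ separately. The differences are cosmetic. You bound $d$ directly from $b\mid a+t$ (so $b\le a+t$), whereas the paper gets $d\le t$ from $a+d\mid t-d$ together with $0<d-t<a+d$. You also write out the finiteness count, which the paper leaves implicit.
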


\begin{proof}
From \eqref{eq:div}, with $b=a+d$, we have
\[
a\mid d+t
\]
and
\[
a+d\mid a+t.
\]
Subtracting $a+d$ from the second dividend gives
\[
a+d\mid t-d.
\]
If $d>t$, then $0<d-t<a+d$, so this divisibility is impossible.  Thus $d\leq t$.

If $d<t$, the two divisibility conditions are exactly \eqref{eq:param1}.  If $d=t$, the second condition is automatic and the first becomes $a\mid 2t$.  These conditions are also sufficient by reversing the calculation.
\end{proof}

\begin{remark}\label{rem:degrees}
For $d<t$ one may write the corresponding degrees directly as
\[
p=2+\frac{t+d}{a},\qquad
q=2+\frac{t-d}{a+d}.
\]
For $d=t$ one has
\[
p=2+\frac{2t}{a},\qquad q=2.
\]
Thus, for $d<t$, the solutions are generated from the indicated divisor conditions involving $t-d$ and $t+d$; for $d=t$, they are generated by the positive divisors of $2t$.
\end{remark}

\subsection{The first numerical cases}

The case $k=1$, $t=1$, recovers the three numerical types of \cite{AkhmedovBraided}.  Since the first two higher levels are studied in detail below, we record here only the cases $k=1,3,5$.  The complete list through $k=15$ is given in Appendix~\ref{app:table}.  Each row records $(p,q;n;g,h;G)$.

\begin{center}
\small
\begin{tabular}{c c c c c c c}
\toprule
$k$ & $p$ & $q$ & $n$ & $g$ & $h$ & $G$\\
\midrule
1&3&3&3&2&2&4\\
1&4&2&4&2&3&5\\
1&3&2&6&3&4&7\\
\midrule
3&4&4&4&2&2&5\\
3&3&3&6&3&3&7\\
3&6&2&6&2&4&7\\
3&4&2&8&3&5&9\\
3&3&2&12&5&7&13\\
\midrule
5&5&5&5&2&2&6\\
5&3&3&9&4&4&10\\
5&6&3&6&2&3&7\\
5&8&2&8&2&5&9\\
5&5&2&10&3&6&11\\
5&4&2&12&4&7&13\\
5&3&2&18&7&10&19\\
\bottomrule
\end{tabular}
\end{center}

\begin{corollary}\label{cor:counts}
For $k=1,3,5,7,9,11,13,15$, the numbers of unordered numerical types are respectively
\[
3,5,7,7,9,11,10,10.
\]
\end{corollary}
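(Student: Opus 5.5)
The counts in Corollary~\ref{cor:counts} follow by a finite enumeration based on Theorem~\ref{thm:param}. For each $k\in\{1,3,\dots,15\}$ put $t=(k+1)/2$, so $t$ ranges over $1,\dots,8$. By Theorem~\ref{thm:classification}, unordered numerical types correspond bijectively to unordered pairs $\{a,b\}$ of positive integers satisfying \eqref{eq:div}. Indeed, the pair determines $n=a+b+t$, then $p=n/a$ and $q=n/b$, and then $g=a+1$ and $h=b+1$. Interchanging the summands swaps $(a,p,g)$ with $(b,q,h)$. So it suffices to count pairs with $a\le b$, written as $b=a+d$ with $0\le d\le t$, and to count each $d$ separately.

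For each $d$ the count is read off from Theorem~\ref{thm:param}.
\begin{itemize}
\item If $d=t$, the admissible $a$ are exactly the positive divisors of $2t$, giving $\tau(2t)$ solutions.
\item If $d<t$, we need $a\mid t+d$ and $a+d\mid t-d$. Since $t-d>0$, the second condition forces $a+d\le t-d$. So we list the divisors $m$ of $t-d$ with $m\ge d+1$, set $a=m-d$, and keep those $a$ with $a\mid t+d$.
\end{itemize}
The total for the given $t$ is the sum over $d=0,\dots,t$. Distinct $(a,d)$ give distinct pairs $(a,b)$ with $a\le b$. The symmetric case $a=b$ occurs only at $d=0$, so there is no double counting.

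The main step is simply carrying out this enumeration carefully for $t=1,\dots,8$. As a check at $t=2$:
\begin{itemize}
\item $d=0$: $a\mid 2$ gives $a=1,2$.
\item $d=1$: $a+1\mid 1$ is impossible.
\item $d=2$: $a\mid 4$ gives $a=1,2,4$.
\end{itemize}
This yields $5$, matching the five $k=3$ rows of the table. I would similarly confirm $t=1$ gives $3$ and $t=3$ gives $7$ against the displayed table. Then I would tabulate $t=4,\dots,8$, expecting $7,9,11,10,10$. For each $t$ this amounts to at most a few dozen divisor checks, which can be cross-checked against the complete list in Appendix~\ref{app:table}.

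The only real obstacle is bookkeeping: avoiding missed divisors in the $d<t$ cases and making sure the boundary case $d=t$ is counted as $\tau(2t)$ rather than by the $d<t$ recipe, where $t-d=0$ would make the condition vacuous in the wrong way. I would present the final count for each $t$ as a short line listing, for each $d$, the admissible values of $a$.
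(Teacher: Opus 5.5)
Your proposal is correct and follows the paper's own argument, whose proof is simply to apply Theorem~\ref{thm:param} for $t=1,\ldots,8$; your enumeration over $d$, with $\tau(2t)$ contributions at $d=t$ and the divisor test $a+d\mid t-d$, $a\mid t+d$ for $d<t$, is that application. Carrying it out does give the stated counts: for instance, at $t=8$ only $d=0$, $d=2$ (with $a=1$) and $d=8$ contribute, for a total of $4+1+5=10$.
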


\begin{proof}
Apply Theorem~\ref{thm:param} for $t=1,\ldots,8$.
\end{proof}

\section{Rational homology realization}

Put
\[
M_{g,p}=(\Sigma_g\times S^2)\setminus \operatorname{int}\nu F_{g,p}.
\]
The projection to the base makes $M_{g,p}$ a bundle over $\Sigma_g$ with fiber the $p$-punctured sphere $P_p$.  The cyclic monodromy permutes the boundary circles.  As in \cite{AkhmedovBraided},
\begin{equation}\label{eq:H1comp}
H_1(M_{g,p};\Z)\cong H_1(\Sigma_g;\Z)\oplus \Z/p.
\end{equation}
Here and below the finite summand is generated by a meridian of the
multisection.  We include the calculation because it is used in every
integral argument.  Choose a symplectic basis
\(a_1,b_1,\ldots,a_g,b_g\) so that the covering character sends \(a_1\)
to \(1\in\mathbb Z/p\) and all other basis elements to zero.  If
\(x_1,\ldots,x_p\) are the oriented boundary curves of \(P_p\), the only
nontrivial monodromy cyclically permutes the \(x_j\), and
\(x_1+\cdots+x_p=0\) in \(H_1(P_p;\mathbb Z)\).  In the coinvariants all
the \(x_j\) become a single class \(\mu_p\), and the last relation becomes
\(p\mu_p=0\).  The north-pole section of the complement bundle splits the
map to the base.  Equivalently, it splits the fundamental-group extension,
so abelianization is the direct sum of the base homology and the fiber
coinvariants; in particular the possible transgression vanishes.  Hence
\[
 H_1(M_{g,p};\mathbb Z)
 =\langle a_1,b_1,\ldots,a_g,b_g,\mu_p\mid p\mu_p=0\rangle,
\]
which proves \eqref{eq:H1comp} and identifies the meridional class.
Let
\[
Q_{p,g}:H_1(F_{g,p};\Z)\longrightarrow H_1(\Sigma_g;\Z)
\]
be the push-forward of the covering map.  We shall use the following elementary observation.

\begin{lemma}\label{lem:covermap}
For the degree-$p$ covering $q:F_{g,p}\to\Sigma_g$, the map
\[
Q_{p,g}=q_*:H_1(F_{g,p};\mathbb Q)\longrightarrow H_1(\Sigma_g;\mathbb Q)
\]
is surjective.  Its kernel is a symplectic subspace of dimension $2G-2g$ in $H_1(F_{g,p};\mathbb Q)$.
\end{lemma}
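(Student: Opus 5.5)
We use the transfer map of the finite covering together with the projection formula for intersection pairings. Write $q^!:H_1(\Sigma_g;\Q)\to H_1(F_{g,p};\Q)$ for the transfer. It is the Poincaré dual of the pullback $q^*$ on $H^1$, and on a closed curve it is given by taking the full preimage. The standard identity $q_*\circ q^!=p\cdot\mathrm{id}$ holds: the full preimage of a curve maps to that curve with total degree $p$. Since we work over $\Q$, this shows immediately that $Q_{p,g}=q_*$ is surjective. Since $\dim H_1(F_{g,p};\Q)=2G$ by Proposition~\ref{prop:multi}, the kernel has dimension $2G-2g$.

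For the symplectic statement we use the projection formula relating the two intersection forms:
\[
x\cdot q^!(y)=q_*(x)\cdot y,\qquad q^!(x)\cdot q^!(y)=p\,(x\cdot y).
\]
The first identity is the cohomological statement $\langle \alpha\cup q^*\beta,[F]\rangle=\langle q_*(\alpha)\cup\beta,[\Sigma_g]\rangle$ in Poincaré-dual form. The second follows from the first combined with $q_*q^!=p$. Put $W=\operatorname{im}q^!$. Because $q_*q^!=p$ is an isomorphism, $q^!$ is injective, so $\dim W=2g$. By the second identity, the intersection form restricted to $W$ is $p$ times the nondegenerate form of $\Sigma_g$. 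Hence $W$ is a symplectic subspace.

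We then identify the kernel with the symplectic complement of $W$. By the first identity, $x\in\ker q_*$ exactly when $x\cdot q^!(y)=0$ for all $y$, that is, exactly when $x\in W^{\perp}$. For a symplectic subspace $W$ of a symplectic vector space we have $V=W\oplus W^{\perp}$, and $W^\perp$ is itself symplectic. So $\ker Q_{p,g}=W^\perp$ is symplectic, of dimension $2G-2g$, which matches the count from surjectivity.

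The only step needing care is the projection formula with the correct signs and orientations. We plan to derive it from the cohomological identity $q_*(\alpha\cup q^*\beta)=q_*\alpha\cup\beta$, where $q_*$ on cohomology is the Gysin map. Since $q$ is orientation-preserving, this intersection-theoretic identity is standard, and no special feature of the cyclic cover is needed.
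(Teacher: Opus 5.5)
Your proposal is correct and follows essentially the same route as the paper: $q_*q^!=p\,\mathrm{id}$ gives surjectivity over $\mathbb Q$, and adjointness of $q_*$ and $q^!$ identifies $\ker q_*$ with the symplectic complement of $\operatorname{im}q^!$, on which the form is $p$ times the form on $\Sigma_g$. You also derive $q^!(x)\cdot q^!(y)=p\,(x\cdot y)$ explicitly from adjointness, which the paper states without comment.
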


\begin{proof}
The transfer, or equivalently the pullback on homology after identifying homology and cohomology by Poincar\'e duality, gives a map $q^!$ satisfying
\[
q_*q^!=p\,\mathrm{id}.
\]
Hence $q_*$ is surjective over $\mathbb Q$.  Moreover $q_*$ and $q^!$ are adjoint for the algebraic intersection pairings.  Thus
\[
\ker q_*=(\operatorname{im}q^!)^\perp.
\]
The restriction of the intersection form to $\operatorname{im}q^!$ is $p$ times the intersection form downstairs and is therefore nondegenerate.  Its symplectic orthogonal complement is consequently nondegenerate as well.  The dimension statement follows from surjectivity.
\end{proof}

\begin{lemma}\label{lem:integralimage}
Choose a symplectic basis $a_1,b_1,\ldots,a_g,b_g$ so that the covering
character sends $a_1$ to $1\in\mathbb Z/p$ and the other basis elements to
zero.  Then
\[
\operatorname{im}\bigl(q_*:H_1(F_{g,p};\mathbb Z)\to
H_1(\Sigma_g;\mathbb Z)\bigr)
=\langle pa_1,b_1,a_2,b_2,\ldots,a_g,b_g\rangle.
\]
In particular, this image is the kernel of the covering character on
$H_1(\Sigma_g;\mathbb Z)$ and its cokernel is $\mathbb Z/p$.
\end{lemma}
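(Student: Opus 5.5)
We identify the image of $q_*$ with the fundamental group of the connected regular cover $F_{g,p}\to\Sigma_g$. Since $F_{g,p}$ is the pullback of $z\mapsto z^p$ along $u$, it is the connected regular cover attached to the composite
\[
\chi:\pi_1(\Sigma_g)\longrightarrow H_1(\Sigma_g;\Z)\xrightarrow{\ \xi\ }\Z\longrightarrow\Z/p ,
\]
which is onto by the primitivity used in Proposition~\ref{prop:multi}. Covering space theory gives $q_*\pi_1(F_{g,p})=\ker\chi$. Because $\chi$ factors through abelianization, $\ker\chi$ contains the commutator subgroup of $\pi_1(\Sigma_g)$. Hence the image of $H_1(F_{g,p};\Z)$ in $H_1(\Sigma_g;\Z)$ is the image of $\ker\chi$ under abelianization, namely $\ker\bar\chi$, where $\bar\chi:H_1(\Sigma_g;\Z)\to\Z/p$ is the induced homomorphism. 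This uses that $H_1(F_{g,p};\Z)$ is the abelianization of $\pi_1(F_{g,p})$ and that $q_*$ on $H_1$ is induced from $q_*$ on $\pi_1$.

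Next we compute $\ker\bar\chi$ in the chosen basis. By hypothesis $\bar\chi(a_1)=1$, and $\bar\chi$ vanishes on $b_1,a_2,\dots,b_g$. An element $m_1a_1+\sum(\cdots)$ therefore lies in the kernel exactly when $p\mid m_1$. So the kernel is $\langle pa_1,b_1,a_2,b_2,\ldots,a_g,b_g\rangle$, and the quotient of $H_1(\Sigma_g;\Z)$ by it is $\Z/p$, generated by the class of $a_1$.

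For completeness we justify the choice of basis. A surjection $H_1(\Sigma_g;\Z)\to\Z/p$ can be written as $v\mapsto\omega(c,v)\bmod p$ for some class $c$. Since $\xi$ is primitive, $c$ may be taken primitive, and $\mathrm{Sp}(2g,\Z)$ acts transitively on primitive vectors. This gives a symplectic basis with $\bar\chi(a_1)=1$ and $\bar\chi=0$ on the other basis elements. The statement takes this basis as given, however, so nothing beyond the computation above is needed.

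The only point needing care is the first step: identifying the homology image with the abelianized kernel, including the fact that the image of $\ker\chi$ under abelianization is all of $\ker\bar\chi$. This holds because abelianization is surjective and $\ker\chi$ is the full preimage of $\ker\bar\chi$. The proof is short.
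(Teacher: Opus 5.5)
Your proof is correct and is essentially the paper's argument: both rest on the lifting criterion $q_*\pi_1(F_{g,p})=\ker\chi$ for the cyclic cover. The paper gets the reverse inclusion by lifting the explicit generators $pa_1,b_1,a_2,\ldots,b_g$ to closed curves, whereas you get it in one step from surjectivity of abelianization and the fact that $\ker\chi$ is the full preimage of $\ker\bar\chi$.
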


\begin{proof}
Every closed loop upstairs projects to a loop in the kernel of the covering
character.  Conversely, the curves representing $b_1,a_i,b_i$ for $i\geq2$
lift to closed curves, as does the $p$-fold traversal of $a_1$.  Their homology
classes generate the displayed kernel, which proves the assertion.
\end{proof}

\begin{lemma}\label{lem:sympposition}
Let $V$ be a symplectic vector space of dimension $2G$, and let
$K_1,K_2\subset V$ be symplectic subspaces of dimensions $r$ and $s$.
There is a symplectic automorphism $T$ such that
\[
\dim(K_1\cap T^{-1}K_2)=\max\{0,r+s-2G\}.
\]
\end{lemma}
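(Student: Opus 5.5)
The plan is to reduce everything to normal forms via symplectic bases and Witt's theorem. Since $K_1$ and $K_2$ are symplectic subspaces, $r=2r'$ and $s=2s'$ are even. The symplectic group $\Sp(V)$ acts transitively on symplectic subspaces of a given dimension. Indeed, a symplectic basis of $K_i$ extends to a symplectic basis of $V$, because $V=K_i\oplus K_i^{\perp}$ with $K_i^\perp$ symplectic. The linear map carrying one such extended basis to another is symplectic. So it suffices to fix a standard symplectic basis $e_1,f_1,\ldots,e_G,f_G$ of $V$ and exhibit two standard symplectic subspaces in good position.

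First I take $K_1'=\operatorname{span}\{e_1,f_1,\ldots,e_{r'},f_{r'}\}$. When $r'+s'\leq G$, I take $K_2'=\operatorname{span}\{e_{G-s'+1},f_{G-s'+1},\ldots,e_G,f_G\}$. These use disjoint index sets, so $K_1'\cap K_2'=0$. When $r'+s'>G$, I take $K_2'$ to be the span of the last $s'$ hyperbolic pairs. The overlap of index sets then has exactly $r'+s'-G$ elements, and the intersection is spanned by those pairs. Its dimension is $2(r'+s'-G)=r+s-2G$.

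Next I choose $A,B\in\Sp(V)$ with $AK_1=K_1'$ and $BK_2=K_2'$, using transitivity. I set $T=B^{-1}A$. Then $T^{-1}K_2=A^{-1}BK_2=A^{-1}K_2'$, so
\[
K_1\cap T^{-1}K_2=A^{-1}\bigl(K_1'\cap K_2'\bigr),
\]
which has the required dimension. Transitivity should be checked for the field actually in use. Over $\Q$ this is automatic by the extension argument. Over $\mathbb R$ the same argument applies, since all bases are linear-algebraic.

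The only real obstacle is the transitivity step, namely extending a symplectic basis of $K_i$ to one of $V$. This follows from nondegeneracy of $K_i$, which gives $V=K_i\oplus K_i^\perp$, together with the existence of symplectic bases on the nondegenerate space $K_i^\perp$. It needs no further input. The lemma does not require $T$ to be integral, and no claim of genericity or lower bounds is needed. The formula $\max\{0,r+s-2G\}$ is in any case the minimum possible, since $\dim(K_1+T^{-1}K_2)\leq 2G$.
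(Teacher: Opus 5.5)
Your proof is correct and follows essentially the same route as the paper: both exhibit a model pair of symplectic subspaces meeting in dimension $\max\{0,r+s-2G\}$ and then invoke transitivity of $\Sp(V)$ on symplectic subspaces of a fixed dimension. The only difference is cosmetic. You put both subspaces in coordinate position and compose two normalizing maps, whereas the paper keeps $K_1$ fixed and builds $K_2'$ inside $K_1^\perp$, enlarged by a symplectic piece of $K_1$ when $r+s>2G$; this is the same configuration written invariantly, and your justification of transitivity via $V=K_i\oplus K_i^\perp$ supplies a step the paper only asserts.
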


\begin{proof}
Use the symplectic orthogonal decomposition
\[
V=K_1\oplus K_1^\perp.
\]
If $s\leq 2G-r$, choose a symplectic subspace $K_2'\subset K_1^\perp$
of dimension $s$; then $K_1\cap K_2'=0$.  If $s>2G-r$, choose a
symplectic subspace $L\subset K_1$ of dimension $s-(2G-r)$ and put
\[
K_2'=K_1^\perp\oplus L.
\]
Then $K_2'$ is symplectic of dimension $s$ and
\[
\dim(K_1\cap K_2')=s-(2G-r)=r+s-2G.
\]
Every two symplectic subspaces of the same dimension are carried to one
another by a symplectic automorphism.  Choose one carrying $K_2'$ to
$K_2$, and take its inverse for $T$.
\end{proof}

Over $\Q$, put
\[
V=H_1(\Sigma_G;\Q),\qquad
A=Q_{p,g},\qquad B=Q_{q,h}.
\]
For a surface gluing inducing $T\in \Sp(V)$, the surface part of the Mayer--Vietoris map is
\[
\Theta_T=(A,-BT):V\longrightarrow H_1(\Sigma_g;\Q)\oplus H_1(\Sigma_h;\Q).
\]

\begin{lemma}\label{lem:gap}
For every numerical type in Theorem~\ref{thm:classification},
\[
G-g-h=t-1=\frac{k-1}{2}.
\]
In particular, $G\geq g+h$, with equality exactly when $k=1$.
\end{lemma}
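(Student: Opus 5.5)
The lemma is a direct substitution into the classification equation of Theorem~\ref{thm:classification}, so I will simply unwind the definitions. By definition $G=n+1$, $g=a+1$, and $h=b+1$, where $a=g-1$, $b=h-1$, and $t=(k+1)/2$. I compute
\[
G-g-h=(n+1)-(a+1)-(b+1)=n-a-b-1.
\]
Equation \eqref{eq:class} gives $n=a+b+t$, so this equals $t-1$. Since $t=(k+1)/2$, we get $t-1=(k-1)/2$.

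For the second assertion, $k\geq1$ is odd by Theorem~\ref{thm:classification}. Hence $t\geq1$ and $G-g-h=t-1\geq 0$. Equality holds if and only if $t=1$, that is, if and only if $k=1$.

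There is no real obstacle here. The only point needing care is to use the relation $n=a+b+t$ from the classification theorem, rather than rederiving it from the Euler characteristic formula of Proposition~\ref{prop:geo}. That relation was already obtained by rewriting $n-\tfrac np-\tfrac nq=t$ using $a=n/p$ and $b=n/q$.
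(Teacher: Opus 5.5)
Your proposal is correct and is the same substitution the paper uses: $G=n+1$, $g=a+1$, $h=b+1$, and $n=a+b+t$ give $G-g-h=t-1$. Your observation that $t\geq 1$, with equality exactly when $k=1$, settles the ``in particular'' clause, which the paper leaves implicit.
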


\begin{proof}
Using $G=n+1$, $g=a+1$, $h=b+1$, and $n=a+b+t$ gives
\[
G-g-h=(a+b+t+1)-(a+b+2)=t-1.
\]
\end{proof}

\begin{theorem}\label{thm:existence}
Every numerical type in Theorem~\ref{thm:classification} admits an equatorially product-framed symplectic gluing $\varphi$ for which
\[
H_1(Z^{p,q}_{g,h}(\varphi);\Q)=0.
\]
Consequently
\[
H^*(Z^{p,q}_{g,h}(\varphi);\Q)
\cong
H^*(\#_k(S^2\times S^2);\Q)
\]
as graded rings.
\end{theorem}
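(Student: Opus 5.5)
The proof will compute $H_1(Z;\Q)$ from the rational Mayer--Vietoris sequence for $Z=M_{g,p}\cup_\varphi M_{h,q}$, glued along $\partial M\cong\Sigma_G\times S^1$. By \eqref{eq:H1comp}, $H_1(M_{g,p};\Q)\cong H_1(\Sigma_g;\Q)$, because the meridian is torsion. The same holds for $M_{h,q}$. Hence rationally the normal circle class dies on both sides. The Mayer--Vietoris sequence then reduces to
\[
H_1(\Sigma_G;\Q)\oplus\Q\xrightarrow{\ (\Theta_T,\,0)\ } H_1(\Sigma_g;\Q)\oplus H_1(\Sigma_h;\Q)\to H_1(Z;\Q)\to H_0(\partial)\to H_0\oplus H_0 .
\]
The last map is injective, so $H_1(Z;\Q)\cong\coker\Theta_T$. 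We must check that, with the product framing, the image of the fiber circle in $H_1(M_{g,p};\Q)$ is exactly the meridian (or torsion), and that the surface classes map by $A=Q_{p,g}$. For the product equatorial framing, a pushoff of $F_{g,p}$ in the normal direction of the equator maps to $\Sigma_g$ by the covering. Any framing correction would land only in the meridian summand, which is torsion. So the surface part of the gluing is exactly $\Theta_T=(A,-BT)$.

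The key step is to choose $T\in\Sp(2G,\Z)$, realized by a surface diffeomorphism, so that $\Theta_T$ is surjective. Since $\dim V=2G$ and the target has dimension $2g+2h\le 2G$ by Lemma~\ref{lem:gap}, surjectivity is a maximal-rank condition. Its kernel is $\ker A\cap T^{-1}\ker B$, where $\ker A$ and $\ker B$ are symplectic of dimensions $r=2G-2g$ and $s=2G-2h$ by Lemma~\ref{lem:covermap}. Rank–nullity shows that $\Theta_T$ is surjective exactly when
\[
\dim(\ker A\cap T^{-1}\ker B)=2G-2g-2h=r+s-2G.
\]
This value is nonnegative by Lemma~\ref{lem:gap}. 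Lemma~\ref{lem:sympposition} supplies a rational symplectic $T$ achieving it.

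The main obstacle is passing from a rational $T\in\Sp(2G,\Q)$ to an integral one realized by a diffeomorphism. I would argue that the transversality condition is Zariski-open in $\Sp(2G)$ and nonempty, since it holds at the rational $T$ from Lemma~\ref{lem:sympposition}. Because $\Sp(2G,\Z)$ is Zariski-dense in $\Sp(2G)$ (Borel density, or directly because it is generated by transvections whose one-parameter groups generate), some integral $T$ satisfies the condition. A direct alternative is to take an explicit integral $T$ adapted to the bases of Lemma~\ref{lem:integralimage}. Every element of $\Sp(2G,\Z)$ is induced by an orientation-preserving diffeomorphism of $\Sigma_G$. After matching areas, this diffeomorphism is isotoped to a symplectomorphism, as recalled in Section~2, and we glue with the product framing and reversed normal circle, which gives a symplectic sum.

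Finally, with $H_1(Z;\Q)=0$ we also get $H^3(Z;\Q)=0$ by Poincaré duality. Then $b_2=e-2=2k$ by Theorem~\ref{thm:classification}, and $\sigma=0$. Since $Z$ is symplectic it is not spin-obstructed rationally, but more to the point the rational intersection form is nondegenerate, indefinite, of signature zero and rank $2k$. Over $\Q$ it is therefore isometric to $\langle1\rangle^{k}\oplus\langle-1\rangle^{k}\cong kH$ if it is odd-type diagonalizable. Here I would use that rational forms are classified by rank, signature, discriminant and Hasse invariants. The discriminant of a rank-$2k$ signature-zero form with the cohomology of $Z$ must be checked to agree with $(-1)^k$ up to squares. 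If that is awkward, I would instead invoke the integral unimodular form on $H^2(Z;\Z)/\mathrm{Tor}$, which is indefinite unimodular of signature zero and hence $\cong kH$ or $k\langle1\rangle\oplus k\langle-1\rangle$, both rationally equivalent to the form of $\#_k(S^2\times S^2)$. This gives the graded ring isomorphism.
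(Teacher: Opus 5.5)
Your proposal is correct and follows the paper's proof essentially step for step. The steps are: rational Mayer--Vietoris with the torsion meridian discarded, then Lemma~\ref{lem:sympposition} together with Lemma~\ref{lem:gap} to get a maximal-rank $\Theta_T$, Zariski density of $\Sp(2G,\Z)$, realization by a mapping class made area-preserving, and finally the ring identification via the unimodular form on $H^2(Z;\Z)/\Tor$. The Hasse-invariant detour in your last paragraph is unnecessary; your integral fallback is exactly the paper's argument, where the vectors $e+f$ and $(e-f)/2$ exhibit $\langle1\rangle\oplus\langle-1\rangle\cong H$ over $\Q$.
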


\begin{proof}
By Lemma~\ref{lem:covermap},
\[
K_A=\ker A,\qquad K_B=\ker B
\]
are symplectic subspaces of $V$, of dimensions
\[
2G-2g,\qquad 2G-2h.
\]
Lemma~\ref{lem:sympposition} gives $T\in\Sp(V)$ for which
\[
\dim(K_A\cap T^{-1}K_B)
=
\max\{0,2G-2g-2h\}.
\]
By Lemma~\ref{lem:gap}, $G\geq g+h$, and therefore
\[
\dim(K_A\cap T^{-1}K_B)=2G-2g-2h.
\]
It follows that
\[
\rk\Theta_T=2g+2h,
\]
so $\Theta_T$ is onto.
Indeed,
\[
\ker\Theta_T=K_A\cap T^{-1}K_B,
\]
and therefore
\[
\rk\Theta_T=2G-(2G-2g-2h)=2g+2h,
\]
the dimension of the target.

The maximal-rank condition is the nonvanishing of at least one $(2g+2h)\times(2g+2h)$ minor.  Hence it is a nonempty Zariski-open condition on the symplectic group.  Since $\Sp(2G,\Z)$ is Zariski dense in $\Sp(2G,\R)$ \cite{Borel}, an integral symplectic matrix can be chosen.  The mapping class group of $\Sigma_G$ surjects onto $\Sp(2G,\Z)$ \cite{FarbMargalit}, so this matrix is realized by a gluing diffeomorphism.  The two induced area forms on $\Sigma_G$ have the same total area.  By Moser's theorem the diffeomorphism may be isotoped, without changing its action on homology, so that it preserves the area form.  Together with the reversal of the normal circle, it therefore gives a symplectic sum gluing.

Over $\Q$ the meridional torsion in \eqref{eq:H1comp} vanishes.  Surjectivity of $\Theta_T$ therefore gives $H_1(Z;\Q)=0$.  Since
\[
e(Z)=2+2k,\qquad \sigma(Z)=0,
\]
we obtain
\[
b_2(Z)=2k,\qquad b_2^+(Z)=b_2^-(Z)=k.
\]
The integral intersection pairing on $H_2(Z;\Z)/\Tor$ is unimodular of rank $2k$ and signature zero.  If it is odd, the classification of indefinite odd unimodular forms gives $k\langle1\rangle\oplus k\langle-1\rangle$; if it is even, the indefinite even unimodular form of signature $(k,k)$ is $kH$; see \cite[Chapter~II, Sections~4--5]{MilnorHusemoller}.  After tensoring with $\Q$, both forms are isometric to $kH$: for one pair $\langle1\rangle\oplus\langle-1\rangle$ with basis $e,f$, the vectors $e+f$ and $(e-f)/2$ form a hyperbolic pair.  Poincar\'e duality gives $H_3(Z;\Q)\cong H_1(Z;\Q)=0$, and for a closed oriented four-manifold with $b_1=0$ the graded rational cohomology ring is determined by its intersection pairing.  This proves the stated ring isomorphism.
\end{proof}

\begin{remark}
For $k=1$ the map $\Theta_T$ is square.  For $k>1$ it is rectangular, with
\[
\dim V-(2g+2h)=k-1.
\]
Thus the determinant used in the smallest case is replaced by maximal minors.
\end{remark}

\section{Integral homology}

The rational calculation forgets the finite meridional classes in the two complements.  At the integral level these classes are essential.  We therefore keep the splitting
\[
H_1(M_{g,p};\mathbb Z)\cong H_1(\Sigma_g;\mathbb Z)\oplus\langle\mu_p\rangle,
\qquad p\mu_p=0,
\]
and the analogous splitting on the second side.  The surface part of the boundary map is the covering push-forward $Q_{p,g}$, while the normal circle maps to the meridian.  A change of normal framing can add a meridional component to a surface class.

Choose the adapted equatorial normal framings of \cite{AkhmedovBraided}.  In these coordinates the boundary gluing has the form
\[
\varphi_*(v,r)=(Sv,\lambda(v)-r),
\]
where $S\in\Sp(2G,\Z)$ is the action on the surface homology and $\lambda:H_1(\Sigma_G;\Z)\to\Z$ records the framing change.  Thus the four column blocks in the presentation below correspond to the boundary surface classes, the common meridian, and the two torsion relations $p\mu_p=0$ and $q\mu_q=0$.

\begin{lemma}\label{lem:boundarymap}
With the equatorial framing, the inclusion
\(F_{g,p}\times S^1\hookrightarrow M_{g,p}\) induces
\[
 (v,r)\longmapsto (Q_{p,g}v,r\mu_p)
\]
under the splitting \eqref{eq:H1comp}.  Replacing the framing by a map
\(\lambda:H_1(F_{g,p};\mathbb Z)\to\mathbb Z\) changes only the
meridional component, replacing \(r\) by \(r+\lambda(v)\).
\end{lemma}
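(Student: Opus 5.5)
The plan is to compute the map on $H_1$ by looking separately at the two kinds of generators of $H_1(F_{g,p}\times S^1;\mathbb Z)\cong H_1(F_{g,p};\mathbb Z)\oplus\mathbb Z$: the surface classes $v$, which we push off along the framing, and the normal circle class $r$. Throughout we use the splitting \eqref{eq:H1comp}. Its first component is the projection to the base $\pi:M_{g,p}\to\Sigma_g$. Its second component is the coinvariant class of the fiber boundary curves; it is read off using the north-pole section, which splits off the base.

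First, the normal circle. A fiber $\{x\}\times S^1$ of $\partial\nu F_{g,p}$ is a meridian of $F_{g,p}$. Inside the $p$-punctured sphere fiber $P_p$ over $\pi(x)$, this meridian is one of the boundary curves $x_j$. Hence its image is $(0,\mu_p)$, and by linearity $r\mapsto r\mu_p$. The base component vanishes because the meridian lies in a single fiber of $\pi$.

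Second, the surface classes. With the equatorial framing, the push-off of a curve $\gamma\subset F_{g,p}$ is obtained by moving each point $(x,z)$ slightly off the equator in the $S^2$ direction, say toward the north pole. This deformation is uniform, since the equatorial framing is the one induced by the normal direction of $S^1\subset S^2$. The pushed curve therefore lies in the region of $\Sigma_g\times S^2$ between the equator and the north pole, minus $\nu F$. Its base projection is $q\circ\gamma$, so the first component is $Q_{p,g}[\gamma]$. For the meridional component, I would use a fiberwise deformation retraction. The northern hemisphere minus the puncture neighborhoods can be slid to the north pole in each fiber. This gives a homotopy in $M_{g,p}$ from the push-off to the curve $x\mapsto(q\gamma(x),N)$ on the north-pole section, and that curve has zero meridional component by definition of the splitting. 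This gives $(v,0)\mapsto(Q_{p,g}v,0)$.

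This is the main obstacle. The punctures sit on the equator, so the northern closed hemisphere minus small half-disks around the punctures must be checked to retract onto the north pole within the complement. Equivalently, one must see that the push-off never winds around the boundary curves. I expect this to follow from the push-off staying strictly in the open northern hemisphere, which is disjoint from $\nu F$ once the normal disks are small and the push-off is slightly larger than their radius. That region is a trivial disk bundle over $\Sigma_g$ containing the section, so the retraction is available.

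Finally, a different framing, given by $\lambda:H_1(F_{g,p};\mathbb Z)\to\mathbb Z$, changes the push-off of $\gamma$ by adding $\lambda([\gamma])$ copies of the normal circle, i.e.\ meridians. Its base projection is unchanged. Combined with the meridian computation above, this replaces $r$ by $r+\lambda(v)$ and leaves $Q_{p,g}v$ untouched, which is the second assertion of the lemma.
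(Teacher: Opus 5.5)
Your proposal is correct and follows essentially the same route as the paper: the base projection gives $Q_{p,g}$, the north-pointing equatorial push-off has no meridional component, the normal circle is the peripheral loop $\mu_p$ in a fiber, and a framing twist adds $\lambda(v)$ meridians. Your homotopy of the push-off into the trivial polar-cap bundle over $\Sigma_g$ containing the north-pole section is a more explicit version of the paper's ``keeps it on one fixed side of the multisection'' step, and it matches directly how the splitting \eqref{eq:H1comp} is defined via that section.
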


\begin{proof}
The projection of a boundary surface loop to the base is the covering
push-forward \(Q_{p,g}\).  In the equatorial trivialization its normal
coordinate is constant, so it has no meridional component.  More
geometrically, choose along the equator the unit normal vector field in
$S^2$ orthogonal to the equator.  Pushing a loop of $F_{g,p}$ in this
direction keeps it on one fixed side of the multisection, and the resulting
push-off has zero meridional coordinate, equivalently zero linking in the
cyclic meridian group $\mathbb Z/p$.  Thus its class has no component along
the meridian.  The normal circle bounds a normal disk in $\nu F_{g,p}$
and represents the peripheral loop \(\mu_p\) in the punctured-sphere
fiber of the complement.  Twisting the
normal trivialization along a loop \(v\) adds \(\lambda(v)\) turns around
that peripheral loop, which gives the last assertion.
\end{proof}

\begin{proposition}\label{prop:MV}
With rows ordered as
\[
H_1(\Sigma_g;\mathbb Z)\oplus\mathbb Z\langle\mu_p\rangle
\oplus H_1(\Sigma_h;\mathbb Z)\oplus\mathbb Z\langle\mu_q\rangle
\]
and columns ordered as
\[
H_1(\Sigma_G;\mathbb Z)\oplus\mathbb Z\langle r\rangle
\oplus\mathbb Z\langle\rho_p\rangle\oplus\mathbb Z\langle\rho_q\rangle,
\]
where $r$ is the common normal-circle generator and $\rho_p,\rho_q$
record the two meridional torsion relations, the first homology of the
mixed sum is the cokernel of
\[
\begin{pmatrix}
Q_{p,g}&0&0&0\\
0&1&p&0\\
-Q_{q,h}S&0&0&0\\
-\lambda&1&0&q
\end{pmatrix}.
\]
Equivalently, if $d=\gcd(p,q)$, it is the cokernel of the homomorphism
\[
H_1(\Sigma_G;\mathbb Z)\oplus\mathbb Z
\longrightarrow
H_1(\Sigma_g;\mathbb Z)\oplus H_1(\Sigma_h;\mathbb Z)\oplus\mathbb Z
\]
represented by
\[
\begin{pmatrix}
Q_{p,g}&0\\
-Q_{q,h}S&0\\
\lambda&d
\end{pmatrix}.
\]
For a product-framed gluing, $\lambda=0$, and
\[
H_1(Z;\Z)\cong
\coker\begin{pmatrix}Q_{p,g}\\-Q_{q,h}S\end{pmatrix}
\oplus \Z/d.
\]
\end{proposition}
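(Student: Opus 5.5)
The plan is to apply the Mayer--Vietoris sequence to $Z=M_{g,p}\cup_\varphi M_{h,q}$ along $\partial M\cong F\times S^1$, whose first homology is $H_1(\Sigma_G;\Z)\oplus\Z\langle r\rangle$.

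Since $M_{g,p}$, $M_{h,q}$ and $Z$ are connected, the sequence ends with
\[
H_1(\Sigma_G)\oplus\Z\to H_1(M_{g,p})\oplus H_1(M_{h,q})\to H_1(Z)\to 0 .
\]
Hence $H_1(Z)$ is the cokernel of $(i_1)_*-(i_2)_*\varphi_*$. I will present each $H_1(M)$ as $H_1(\Sigma)\oplus\Z\langle\mu\rangle$ modulo $p\mu_p$ (respectively $q\mu_q$), using \eqref{eq:H1comp}. Lifting to free modules, the cokernel is that of the map from $H_1(\Sigma_G)\oplus\Z\langle r\rangle\oplus\Z\langle\rho_p\rangle\oplus\Z\langle\rho_q\rangle$, where $\rho_p\mapsto p\mu_p$ and $\rho_q\mapsto q\mu_q$.

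By Lemma~\ref{lem:boundarymap}, the first side sends $(v,r)\mapsto(Q_{p,g}v,\,r\mu_p)$. The second side is precomposed with the gluing $(v,r)\mapsto(Sv,\lambda(v)-r)$, giving $(Q_{q,h}Sv,(\lambda(v)-r)\mu_q)$. The difference gives the columns $(Q_{p,g},0,-Q_{q,h}S,-\lambda)$ for $v$ and $(0,1,0,1)$ for $r$. Overall signs on the $\mu_q$ row are immaterial after row operations (negating the $\mu_q$ generator). This is the first matrix, up to that sign convention.

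For the reduced form, I will eliminate the meridians. The $r$-column identifies $\mu_p$ with $-\mu_q$ (or $\mu_q$, depending on sign), so subtracting the $\mu_p$ row from the $\mu_q$ row and clearing the $r$ column removes one generator and one relation. The remaining meridian $\mu$ then has relations $p\mu$ and $q\mu$ (up to sign) and receives $\lambda(v)$ from the surface columns. Column operations on the two integer columns $p,q$ (the Euclidean algorithm) replace them by $d$ and $0$; the zero column is dropped. Flipping signs of the last row gives the stated $3\times2$ block matrix.

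When $\lambda=0$ the matrix is block diagonal, so the cokernel splits as $\coker\begin{pmatrix}Q_{p,g}\\-Q_{q,h}S\end{pmatrix}\oplus\Z/d$.

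The main point needing care is sign and orientation bookkeeping: the direction of $\mu_q$ relative to $r$ after the normal-circle reversal, and the claim that under the splitting \eqref{eq:H1comp} the boundary surface classes have no meridional component. The latter is exactly Lemma~\ref{lem:boundarymap}, so I expect the rest to be routine integral row and column reduction.
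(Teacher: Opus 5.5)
Your proposal is correct and follows the paper's proof essentially step for step: Mayer--Vietoris, the boundary formula of Lemma~\ref{lem:boundarymap} composed with the gluing, adjoining the relation columns $\rho_p,\rho_q$, eliminating the common meridian through the $r$-column, reducing $(p,q)$ to $(d,0)$, and splitting off $\Z/d$ when $\lambda=0$. In fact your column computation reproduces the first matrix exactly with no sign adjustment. One small justification should be corrected: exactness at $H_1(Z)$ needs $H_0(F\times S^1)\to H_0(M_{g,p})\oplus H_0(M_{h,q})$ to be injective, and this comes from the connectedness of the gluing boundary $F\times S^1$, not from the connectedness of $Z$.
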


\begin{proof}
The boundary and the two complements are connected, so the map on
$H_0$ in the Mayer--Vietoris sequence is injective.  Hence $H_1(Z)$ is
the cokernel of the boundary map on $H_1$.
By Lemma~\ref{lem:boundarymap}, the Mayer--Vietoris map on the boundary
surface and normal-circle generators is
\[
(v,r)\longmapsto
\bigl(Q_{p,g}v,r\mu_p,-Q_{q,h}Sv,
(r-\lambda(v))\mu_q\bigr),
\]
up to simultaneously changing the signs in one summand.  Adjoining the
two torsion relations gives the displayed four-block presentation from the
two splittings
\[
H_1(M_{g,p};\Z)=H_1(\Sigma_g;\Z)\oplus \Z/p,
\]
\[
H_1(M_{h,q};\Z)=H_1(\Sigma_h;\Z)\oplus \Z/q.
\]
The column containing the common normal circle gives
$\mu_p+\mu_q=0$.  Substituting $\mu_q=-\mu_p$ changes the entry
$-\lambda$ in the last row to $+\lambda$ in the reduced meridional row;
equivalently, this is an allowed integral sign operation followed by
elimination of the common normal-circle column.  The two remaining relations
\(p\mu=0\) and \(q\mu=0\) are equivalent, by integral elementary
operations, to \(d\mu=0\), where \(d=\gcd(p,q)\).  This yields the reduced
presentation.  When \(\lambda=0\), the meridional row is independent of
the surface rows and splits off as \(\mathbb Z/d\).
\end{proof}

\begin{corollary}\label{cor:quotients}
Every direct mixed sum has quotients $\Z/p$ and $\Z/q$ in first homology.  In particular, no direct mixed sum in this paper is simply connected or has the integral homology of $\#_k(S^2\times S^2)$.
\end{corollary}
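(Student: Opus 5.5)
The plan is to construct explicit surjections from $H_1(Z;\Z)$ onto $\Z/p$ and onto $\Z/q$, using the presentation of Proposition~\ref{prop:MV}. These surjections should hold for an arbitrary direct gluing, so the framing term $\lambda$ is allowed to be nonzero.

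The natural candidate for the $\Z/p$ quotient is the covering character of the first complement.

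\begin{enumerate}[label=\arabic*.]
\item Let $\chi_p:H_1(\Sigma_g;\Z)\to\Z/p$ be the character defining $F_{g,p}$; it is onto because $\xi$ is primitive.
\item Define $\Phi_p$ on the row module $H_1(\Sigma_g)\oplus\Z\mu_p\oplus H_1(\Sigma_h)\oplus\Z\mu_q$ by
\[
(x,m,y,m')\longmapsto \chi_p(x)+\psi(m,m'),
\]
where $\psi$ is a linear form on the meridional coordinates, still to be chosen.
\item Check that $\Phi_p$ kills every column of the four-block matrix.
\begin{itemize}
\item The surface columns give $\chi_p(Q_{p,g}v)-\psi(0,\lambda(v))$. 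The first term vanishes by Lemma~\ref{lem:integralimage}, since the image of $Q_{p,g}$ is the kernel of the character.
\item The columns for $r$, $\rho_p$ and $\rho_q$ require $\psi(1,1)=0$, $\psi(p,0)=0$ and $\psi(0,q)=0$.
\end{itemize}
\item The simplest choice is $\psi=0$, which satisfies all of these and also removes the $\lambda$ term. The map $\Phi_p$ is then onto $\Z/p$ via the $H_1(\Sigma_g)$ rows, and it descends to a surjection $H_1(Z;\Z)\to\Z/p$.
\item Applying the same construction with $\chi_q$ on the $H_1(\Sigma_h)$ rows gives $\Z/q$. Here the relevant column entry is $\chi_q(Q_{q,h}Sv)=0$, which holds for any $S$.
\end{enumerate}

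The consequences then follow directly. Since $p\ge2$, the group $H_1(Z;\Z)$ is nonzero, so $Z$ is not simply connected. The integral homology of $\#_k(S^2\times S^2)$ has $H_1=0$, so $Z$ does not have that integral homology either.

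The only step I expect to need care is the well-definedness check. One must confirm that the framing term only ever appears in meridional coordinates, which is what Lemma~\ref{lem:boundarymap} asserts, so that choosing $\psi=0$ makes $\lambda$ irrelevant. One must also confirm that the surface row of each complement meets only the image of the covering push-forward. Once Lemma~\ref{lem:integralimage} is invoked for both sides, nothing else is needed.

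I also note that the $\Z/d$ meridional summand does not by itself give the full $\Z/p$ and $\Z/q$. That is why the base character, rather than the meridian, is used as the witness.
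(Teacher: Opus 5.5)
Your proposal is correct and follows essentially the same route as the paper's proof. The paper also projects onto $H_1(\Sigma_g;\Z)/\operatorname{im}Q_{p,g}\cong\Z/p$ (your $\chi_p$) and sends the other surface summand and both meridional classes to zero (your choice $\psi=0$), so that every boundary relation, including the framing term $\lambda$, dies. It then reverses the roles of the two sides to obtain $\Z/q$.
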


\begin{proof}
By Lemma~\ref{lem:integralimage}, the image of $H_1(F_{g,p};\Z)$ in the base homology is the kernel of the covering character
\[
H_1(\Sigma_g;\Z)\to \Z/p.
\]
Project the target of the Mayer--Vietoris map to
\[
H_1(\Sigma_g;\mathbb Z)/\operatorname{im}Q_{p,g}\cong\mathbb Z/p
\]
and send the second summand and both meridional classes to zero.  Every
boundary relation, including the framing term, maps to zero, so the map
descends to a surjection from \(H_1(Z;\mathbb Z)\).  Reversing the roles of
the two sides gives the quotient \(\mathbb Z/q\).
\end{proof}

For $k>1$, a complete description of all finite abelian groups obtained
from the rectangular matrix is a separate integral problem.  For a
product-framed gluing, let
\[
A_S=\begin{pmatrix}Q_{p,g}\\-Q_{q,h}S\end{pmatrix}.
\]
If $A_S$ has full row rank, then
\[
|\operatorname{coker}A_S|
=\gcd\{\text{maximal minors of }A_S\},\qquad
|H_1(Z;\mathbb Z)|=\gcd(p,q)\,|\operatorname{coker}A_S|.
\]
For a nonzero framing term one must instead use the maximal minors of
the full reduced presentation in Proposition~\ref{prop:MV}.  This differs
from the square determinant calculation in the $k=1$ case.

\section{Boundary incompressibility, Bass--Serre splittings, and lattice obstructions}

The fundamental-group obstruction is independent of the numerical size.

\begin{lemma}\label{lem:productcover}
Let $q:F_{g,p}\to\Sigma_g$ be the explicit cyclic multisection cover of
Proposition~\ref{prop:multi}.  The pullback of the complement bundle admits
a natural fiberwise trivialization after the defining root coordinate and
equivariant tubular neighborhoods are fixed:
\[
q^*M_{g,p}\cong F_{g,p}\times P_p.
\]
\end{lemma}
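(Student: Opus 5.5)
The plan is to build the trivialization directly from the defining root coordinate. Write points of $q^*M_{g,p}$ as pairs $(y,w)$ with $y=(x,z)\in F_{g,p}$, so $z^p=u(x)$, and $w\in S^2$ lying in the fiber of $M_{g,p}$ over $x=q(y)$. That fiber is $S^2$ with small disks removed around the $p$ roots $\{\zeta\in S^1:\zeta^p=u(x)\}$. On $F_{g,p}$ the coordinate $z$ gives a globally defined, continuous choice of one root, and the full root set is $\{z\omega^j\}$ with $\omega=e^{2\pi i/p}$. I would use the rotation $R_z$ of $S^2$ about the polar axis that sends $1\in S^1$ to $z$. It carries the standard configuration of $p$th roots of unity $\{\omega^j\}$ onto the root set over $x$.

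The key steps, in order, are as follows.

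\begin{enumerate}
\item Fix the equivariant tubular neighborhood. Choose a rotation-invariant disk $D_0$ about $1\in S^2$ and let $\nu F_{g,p}$ meet the fiber over $x$ in the disks $R_{\zeta}(D_0)$, where $\zeta$ runs over the roots. This is exactly what ``equivariant tubular neighborhoods are fixed'' means, and it makes the removed disks depend only on the root set.
\item Put $P_p=S^2\setminus\bigcup_j \operatorname{int}R_{\omega^j}(D_0)$.
\item Define
\[
\Phi:F_{g,p}\times P_p\longrightarrow q^*M_{g,p},\qquad \Phi(y,w)=(y,R_z w).
\]
Since $R_z R_{\omega^j}=R_{z\omega^j}$, the map $R_z$ sends $P_p$ diffeomorphically onto the fiber of $M_{g,p}$ over $x$.
\item Check that $\Phi$ is a smooth fiber-preserving bijection whose inverse is $(y,w')\mapsto(y,R_z^{-1}w')$. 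Both maps are smooth because $z$ is a smooth function on $F_{g,p}$; this uses that $F_{g,p}$ is a smooth fiber product, as in Proposition~\ref{prop:multi}.
\end{enumerate}

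Naturality comes from the construction using only the root coordinate $z$. The deck transformation $z\mapsto\omega z$ of $q$ then corresponds on the product to the action of $R_\omega$ on the $P_p$ factor, and $R_\omega$ cyclically permutes the boundary circles. This matches the cyclic monodromy described in Section~4 and confirms that the trivialization is the expected one: the monodromy of $M_{g,p}$ is killed by pulling back along the cover that it defines.

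The main obstacle I expect is the tubular-neighborhood bookkeeping in step~1. The complement $M_{g,p}$ was defined with some neighborhood $\nu F_{g,p}$, and the fiberwise trivialization needs this neighborhood to be rotation-equivariant. Near critical points of $u$ there is no issue, because the construction depends only on $u(x)\in S^1$ and not on $du$. In general, however, one must replace $\nu F_{g,p}$ by the equivariant model through an isotopy, and that isotopy changes the complement only up to fiber-preserving diffeomorphism. I would handle this by noting that any two tubular neighborhoods are isotopic. The hypothesis of the lemma states that the neighborhoods are fixed, so in the proof I would simply take the equivariant model as the definition.
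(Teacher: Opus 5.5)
Your proposal is correct and follows the paper's proof. Both trivialize $q^*M_{g,p}$ using the polar-axis rotation determined by the root coordinate $z$, after fixing equivariant tubular disks: the paper rotates by $z^{-1}$ from the pullback to the product, and you use the inverse map $R_z$ from the product to the pullback. Your explicit $\Phi$, its inverse, and the deck-transformation check only spell out details the paper leaves implicit; with your conventions the deck transformation acts by $R_\omega^{-1}$ rather than $R_\omega$ on $P_p$, a harmless slip.
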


\begin{proof}
A point of $F_{g,p}$ is a pair $(x,z)$ with $z^p=u(x)$.  Over $x$, the
punctures of the sphere fiber are the points
\[
z,\zeta z,\ldots,\zeta^{p-1}z,
\qquad \zeta=e^{2\pi i/p},
\]
on the equator.  Rotation of the sphere by multiplication with $z^{-1}$
fixes the two poles and carries this configuration to the fixed set of
$p$th roots of unity.  These rotations vary smoothly with $(x,z)$ and,
after choosing equivariant tubular disks around the punctures, give a
fiber-preserving diffeomorphism from the pullback complement to
$F_{g,p}\times P_p$.  Thus no residual pure braid or point-pushing
monodromy remains in this explicit root construction.
\end{proof}

\begin{theorem}\label{thm:incomp}
For the explicit cyclic equatorial multisection
$F_{g,p}\subset\Sigma_g\times S^2$ of degree $p\geq2$, the homomorphism
\[
\pi_1(\partial\nu F_{g,p})\longrightarrow \pi_1(M_{g,p})
\]
is injective.
\end{theorem}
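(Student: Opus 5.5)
The plan is to pass to the finite cover supplied by Lemma~\ref{lem:productcover}. Injectivity of a homomorphism can be checked after restricting to a finite-index subgroup of the source, provided that subgroup maps into a covering space of the target. Let $\widetilde M=q^*M_{g,p}$ be the pullback of the complement bundle along the degree-$p$ cover $q:F_{g,p}\to\Sigma_g$. Then $\widetilde M\to M_{g,p}$ is a $p$-fold covering, and by the lemma
\[
\widetilde M\cong F_{g,p}\times P_p .
\]
The boundary $\partial\nu F_{g,p}=F_{g,p}\times S^1$ is a union of circle fibers of the boundary of the complement bundle. Its preimage in $\widetilde M$ consists of boundary components of the form $F_{g,p}\times x_j$, where $x_j$ ranges over boundary circles of $P_p$. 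Since $F_{g,p}\times S^1$ lifts homeomorphically to one such component, it suffices to show that
\[
\pi_1(F_{g,p}\times x_j)\to\pi_1(F_{g,p}\times P_p)
\]
is injective.

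First I would make the lift explicit. The tautological section $(x,z)\mapsto z$ of $q^*$ of the equatorial configuration selects a puncture over each point of $F_{g,p}$. After the rotation by $z^{-1}$ this puncture becomes the fixed root $1$. The normal circle around it is then the fixed boundary circle $x_1\subset P_p$, and the surface directions of $\partial\nu F_{g,p}$ are carried to $F_{g,p}\times\{\text{pt}\}$ up to the fixed framing. So the lifted boundary component is $F_{g,p}\times x_1$. I would check this against the inclusion computed in Lemma~\ref{lem:boundarymap}: the surface factor maps identically and the normal circle maps to $x_1$. Up to isotopy, the inclusion is the product of $\mathrm{id}_{F_{g,p}}$ with the inclusion $x_1\hookrightarrow P_p$.

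On fundamental groups the map is then
\[
\pi_1(F_{g,p})\times\Z\longrightarrow\pi_1(F_{g,p})\times F_{p-1}.
\]
It is the identity on the first factor and sends the generator of $\Z$ to the peripheral loop $x_1$ in the free group $\pi_1(P_p)$. Because $p\geq2$, $P_p$ is not a disk, so $\pi_1(P_p)$ is free of rank $p-1\ge1$ and has no torsion. When $p\ge3$, $x_1$ is a primitive basis element. When $p=2$, $P_2$ is an annulus and $x_1$ generates $\Z$. In both cases $\langle x_1\rangle\to\pi_1(P_p)$ is injective, and a product of injections is injective. Finally, the composite $\pi_1(\partial\nu F)\cong\pi_1(\widetilde{\partial})\to\pi_1(\widetilde M)\hookrightarrow\pi_1(M_{g,p})$ is injective, since covering maps are $\pi_1$-injective. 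This gives the theorem.

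The main obstacle is the identification step. One has to verify that the trivialization of Lemma~\ref{lem:productcover} carries the lifted boundary torus to a product $F_{g,p}\times x_1$, without a twist that would send the surface generators to elements with components in $\pi_1(P_p)$. Even a twisted map, of the form $v\mapsto(v,\phi(v))$ and $r\mapsto(1,x_1^{\pm1})$, would still be injective, because projection to the first factor already separates surface classes. I would therefore argue injectivity via the projection to $\pi_1(F_{g,p})$ together with the restriction to the kernel $\langle r\rangle$. This argument is robust to the framing choice.
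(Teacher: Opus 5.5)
Your proposal is correct and follows essentially the same route as the paper's proof. Both pass to the pullback cover $q^*M_{g,p}\cong F_{g,p}\times P_p$ of Lemma~\ref{lem:productcover}, show that the boundary lifts homeomorphically to a component $F_{g,p}\times x_j$, and conclude from the incompressibility of a boundary circle of $P_p$ for $p\geq 2$ together with $\pi_1$-injectivity of coverings. The paper obtains the lift from the splitting of $F_{g,p}\times_{\Sigma_g}F_{g,p}$ into graphs of deck transformations, while you use the tautological puncture $z$; your robustness remark about twisted identifications is valid but not needed, since the fiber-preserving trivialization already carries the lifted boundary component onto a product boundary component.
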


\begin{proof}
Let $\chi:\pi_1(\Sigma_g)\to\mathbb Z/p$ be the covering character and
pull $M_{g,p}$ back to the corresponding cover.  By
Lemma~\ref{lem:productcover}, the resulting complement cover is
$F_{g,p}\times P_p$.  The character defining this cover is trivial on
the boundary group: $q_*$ is injective,
$q_*\pi_1(F_{g,p})=\ker\chi$, and the normal circle is vertical.  More
explicitly, the pullback of the boundary is
\[
F_{g,p}\times_{\Sigma_g}F_{g,p}\times S^1.
\]
Since $q$ is a regular cyclic cover, the fiber product is the disjoint
union of the graphs of its $p$ deck transformations.  Thus each lifted
boundary component is naturally $F_{g,p}\times S^1$ and maps
homeomorphically to the original boundary.
For such a component the inclusion is
\[
\pi_1(F_{g,p})\times\mathbb Z\longrightarrow
\pi_1(F_{g,p})\times\pi_1(P_p),
\]
and is injective because a boundary circle of $P_p$ is incompressible for
$p\geq2$.  If a boundary loop is null-homotopic in $M_{g,p}$, its closed
lift to this boundary component has a lifted null-homotopy in the product
cover.  Injectivity upstairs makes the lifted loop trivial.  Since the
chosen boundary component maps homeomorphically to the original one, the
original loop is trivial as well.
\end{proof}

\begin{corollary}\label{cor:pi1}
For every direct mixed sum in Theorem~\ref{thm:classification},
\[
\pi_1(\Sigma_G)\times\Z\hookrightarrow \pi_1(Z).
\]
Consequently $\pi_1(Z)$ is nontrivial and contains $\Z^2$.
\end{corollary}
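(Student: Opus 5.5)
The plan is to deduce the corollary from Theorem~\ref{thm:incomp} using the van Kampen theorem for an amalgamated product along an incompressible boundary. Write $Z=M_{g,p}\cup_{\varphi}M_{h,q}$, where the common boundary is the torus-like three-manifold $Y=\Sigma_G\times S^1$. Its fundamental group is $\pi_1(\Sigma_G)\times\Z$, because $\partial\nu F_{g,p}$ is the trivial circle bundle $F_{g,p}\times S^1$ in the chosen framing and $F_{g,p}\cong\Sigma_G$.

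First, van Kampen gives
\[
\pi_1(Z)\cong\pi_1(M_{g,p})\ast_{\pi_1(Y)}\pi_1(M_{h,q}),
\]
where the two edge homomorphisms are the inclusion-induced maps. On the second side the inclusion is precomposed with $\varphi_*$. This requires $Y$, $M_{g,p}$ and $M_{h,q}$ to be connected with a common basepoint on $Y$, and that is clear since $Y$ is connected.

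Second, Theorem~\ref{thm:incomp} shows that $\pi_1(Y)\to\pi_1(M_{g,p})$ is injective. The same theorem, applied with $(h,q)$ in place of $(g,p)$, shows that $\pi_1(Y)\to\pi_1(M_{h,q})$ is injective. Since $\varphi$ is a diffeomorphism, $\varphi_*$ is an isomorphism and does not affect injectivity.

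Third, I use the standard fact that in an amalgamated free product $A\ast_C B$ whose edge maps $C\to A$ and $C\to B$ are injective, the factors and hence $C$ embed. This follows from the normal form theorem, or from the Bass--Serre tree on which $C$ is an edge stabilizer. Therefore $\pi_1(\Sigma_G)\times\Z\hookrightarrow\pi_1(Z)$.

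For the final assertion, $G=n+1\ge2$, so $\pi_1(\Sigma_G)$ contains an infinite cyclic subgroup $\langle\gamma\rangle$ for any nontrivial $\gamma$. Then $\langle\gamma\rangle\times\Z\cong\Z^2$ embeds in $\pi_1(Y)$ and hence in $\pi_1(Z)$. In particular $\pi_1(Z)$ is nontrivial.

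The only point needing care is that the statement holds for every direct gluing, including framing changes $\lambda$. The boundary identification is still a diffeomorphism of $Y$, and Theorem~\ref{thm:incomp} concerns only the inclusion of $\partial\nu F$, independent of the framing. So $\lambda$ enters only through $\varphi_*$, which is an isomorphism, and the injectivity is unaffected.
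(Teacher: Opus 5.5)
Your proposal is correct and follows essentially the same route as the paper: van Kampen presents $\pi_1(Z)$ as an amalgam over the boundary group, Theorem~\ref{thm:incomp} gives injectivity of both edge maps, and the normal form theorem embeds $\pi_1(\Sigma_G)\times\Z$. Your explicit remarks on the framing term $\lambda$ and on extracting $\Z^2$ from a nontrivial $\gamma\in\pi_1(\Sigma_G)$ spell out details that the paper leaves implicit.
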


\begin{proof}
Van Kampen identifies the group with an amalgamated product of the two complement groups over the common boundary group.  By Theorem~\ref{thm:incomp}, both boundary maps are injective, and the normal form theorem for amalgamated products injects the boundary group into the final group.
\end{proof}

\begin{proposition}\label{prop:bassserre}
For every direct mixed sum, $\pi_1(Z)$ acts without inversion on a Bass--Serre tree $T$ whose quotient is a single edge.  The two vertex stabilizers are conjugate to $\pi_1(M_{g,p})$ and $\pi_1(M_{h,q})$, and the edge stabilizer is conjugate to $\pi_1(\Sigma_G)\times\mathbb Z$.  Moreover, writing $F_r$ for the free group of rank $r$,
\[
\pi_1(M_{g,p})\text{ contains an index-}p\text{ subgroup }\pi_1(\Sigma_G)\times F_{p-1},
\]
and similarly $\pi_1(M_{h,q})$ contains an index-$q$ subgroup $\pi_1(\Sigma_G)\times F_{q-1}$.  The splitting is nontrivial.
\end{proposition}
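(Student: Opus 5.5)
The argument has three parts: the tree comes from the splitting, the finite-index subgroups come from the product structure upstairs, and nontriviality is a short group-theoretic check.

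\emph{The tree.} By Corollary~\ref{cor:pi1} and van Kampen, $\pi_1(Z)\cong \pi_1(M_{g,p})*_{E}\pi_1(M_{h,q})$ with $E=\pi_1(F_{G}\times S^1)\cong\pi_1(\Sigma_G)\times\mathbb Z$. Theorem~\ref{thm:incomp} makes both edge maps injective, so this is a genuine amalgam. Bass--Serre theory for an amalgam then gives a tree $T$ with vertex set $\pi_1(Z)/\pi_1(M_{g,p})\sqcup\pi_1(Z)/\pi_1(M_{h,q})$ and edge set $\pi_1(Z)/E$. The action is without inversion because the two vertex types are distinct orbits. The quotient is a single edge, and the stabilizers are the stated conjugates.

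\emph{The finite-index subgroups.} Let $\chi$ be the covering character. The composite $\pi_1(M_{g,p})\to\pi_1(\Sigma_g)\xrightarrow{\chi}\mathbb Z/p$ is onto, and its kernel $K$ has index $p$. By covering theory, $K$ is the fundamental group of the pullback $q^*M_{g,p}$. Lemma~\ref{lem:productcover} identifies this pullback with $F_{g,p}\times P_p$. Since $F_{g,p}$ has genus $G$ and $\pi_1(P_p)\cong F_{p-1}$, this gives $K\cong\pi_1(\Sigma_G)\times F_{p-1}$. The same argument on the other side gives an index-$q$ subgroup $\pi_1(\Sigma_G)\times F_{q-1}$.

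\emph{Nontriviality.} I must show that $E$ is a proper subgroup of each vertex group; then no vertex is fixed and the splitting is nontrivial. Argue by index. $E$ lies in $K$, because the boundary maps into the kernel of $\chi$ (as in the proof of Theorem~\ref{thm:incomp}). Under the product identification, the image of $E$ in $K$ is $\pi_1(F_{g,p})\times\langle x_1\rangle$, where $x_1$ is a boundary loop of $P_p$. For $p\ge2$ the free group $F_{p-1}$ is nontrivial, while $\langle x_1\rangle\subset F_{p-1}$ is cyclic. If $p\ge3$, then $F_{p-1}$ is nonabelian, so $\langle x_1\rangle$ is proper and $E\subsetneq K$. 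If $p=2$, then $P_2$ is an annulus and $x_1$ generates $F_1$. In that case $E=K$, which still has index $2$ in $\pi_1(M_{g,p})$, so $E$ is again proper. Either way $E\ne\pi_1(M_{g,p})$, and symmetrically $E\ne\pi_1(M_{h,q})$.

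The main obstacle I expect is the bookkeeping that identifies the inclusion $E\hookrightarrow K$ through the fiber-product description in the proof of Theorem~\ref{thm:incomp}, with the boundary mapping to a chosen lifted component. This is where I will reuse that proof's description of the lifted boundary as $F_{g,p}\times S^1$. The index argument above avoids needing anything finer, since $[\pi_1(M_{g,p}):E]\ge p\ge2$ suffices on its own.
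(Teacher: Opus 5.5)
Your proposal is correct and follows essentially the same route as the paper: van Kampen plus Theorem~\ref{thm:incomp} gives the amalgam and its tree, and Lemma~\ref{lem:productcover} identifies the index-$p$ kernel of the composite character with $\pi_1(\Sigma_G)\times F_{p-1}$. Nontriviality comes from the boundary group lying in the kernel of a surjection onto $\mathbb Z/p$, which the paper phrases as $H_1(M_{g,p};\mathbb Z)/\operatorname{im}H_1(\partial M_{g,p};\mathbb Z)\twoheadrightarrow\mathbb Z/p$; your case split into $p\ge3$ and $p=2$ is correct but unnecessary, since $E\subseteq K$ with $[\pi_1(M_{g,p}):K]=p\ge2$ already suffices.
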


\begin{proof}
Van Kampen and Theorem~\ref{thm:incomp} give
\[
\pi_1(Z)\cong
\pi_1(M_{g,p})*_{\pi_1(\Sigma_G)\times\mathbb Z}\pi_1(M_{h,q}).
\]
This gives the Bass--Serre action and the stabilizers.  Lemma~\ref{lem:productcover} identifies the pullback of $M_{g,p}\to\Sigma_g$ by the cyclic cover $\Sigma_G\to\Sigma_g$ with $\Sigma_G\times P_p$.  Hence the corresponding index-$p$ subgroup is
\[
\pi_1(\Sigma_G)\times\pi_1(P_p)=\pi_1(\Sigma_G)\times F_{p-1}.
\]
The same argument applies on the other side.  Finally, each boundary
inclusion is proper.  On the first side, projection to the base quotient
gives
\[
H_1(M_{g,p};\mathbb Z)/\operatorname{im}H_1(\partial M_{g,p};\mathbb Z)
\twoheadrightarrow\mathbb Z/p,
\]
and the analogous quotient on the second side surjects onto \(\mathbb Z/q\).
Thus neither vertex group equals the edge group, and the splitting is
nontrivial.
\end{proof}

The resulting lattice obstructions are independent of $k$.  Here a
lattice means a discrete subgroup of finite covolume.  For a lattice in
$\operatorname{PSL}_2(\mathbb R)\times\operatorname{PSL}_2(\mathbb R)$,
irreducibility means that its projection to each factor is dense.

\begin{lemma}\label{lem:centralizer}
Let $\Gamma<\operatorname{PSL}_2(\mathbb R)\times\operatorname{PSL}_2(\mathbb R)$.  Suppose that $\Gamma$ contains a subgroup $H\times\langle z\rangle$, where $H$ is a nonabelian surface group and $z$ has infinite order.  If both projections of $z$ to the two factors are nontrivial, then this is impossible.
\end{lemma}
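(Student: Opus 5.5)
The plan is to show that $H$ would be forced to be abelian, using only that centralizers of nontrivial elements of $\operatorname{PSL}_2(\mathbb R)$ are abelian. Discreteness of $\Gamma$ is not needed. Write $\pi_1,\pi_2$ for the two projections and $z=(z_1,z_2)$ with $z_1\neq1\neq z_2$. Every $h\in H$ commutes with $z$ in $\Gamma$, so $\pi_i(h)$ commutes with $z_i$. Hence
\[
\pi_i(H)\subset C_{\operatorname{PSL}_2(\mathbb R)}(z_i),\qquad i=1,2.
\]
If both centralizers are abelian, then every commutator $[h,h']$ with $h,h'\in H$ has trivial image under both projections. It is therefore trivial in $\operatorname{PSL}_2(\mathbb R)\times\operatorname{PSL}_2(\mathbb R)$, and so trivial in $\Gamma$. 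Since $H\times\langle z\rangle$ is a subgroup of $\Gamma$, the group $H$ itself would be abelian. This contradicts the assumption that $H$ is a nonabelian surface group (of genus at least two). The infinite order of $z$ is not used; nontriviality of both components is what matters.

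The key step, and the only real obstacle, is the centralizer claim: for $1\neq w\in\operatorname{PSL}_2(\mathbb R)$, the centralizer $C(w)$ is abelian. I would lift $w$ to a non-scalar matrix $A\in\operatorname{SL}_2(\mathbb R)$ and handle two cases.

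\emph{Commuting lifts.} Consider $g\in\operatorname{SL}_2(\mathbb R)$ with $gA=Ag$. Since $A$ is non-scalar, it has a cyclic vector, so its commutant in $M_2(\mathbb R)$ is $\{xI+yA\}$. These centralizing elements are polynomials in $A$ and commute with one another.

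\emph{Sign-twisted lifts.} The new feature in $\operatorname{PSL}_2$ is $gAg^{-1}=-A$. Taking traces forces $\operatorname{tr}A=0$, so $A$ is conjugate in $\operatorname{SL}_2(\mathbb R)$ to $J=\begin{pmatrix}0&-1\\1&0\end{pmatrix}$. One then checks that $gJg^{-1}=-J$ has no solution with $\det g=1$. Writing $gJ=-Jg$ directly forces $g=\begin{pmatrix}\alpha&\beta\\\beta&-\alpha\end{pmatrix}$, whose determinant $-\alpha^2-\beta^2$ is negative. Equivalently, conjugating $J$ to $-J=J^{-1}$ reverses the rotation direction, which requires orientation reversal.

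Hence the preimage of $C(w)$ in $\operatorname{SL}_2(\mathbb R)$ is contained in $\{xI+yA\}\cap\operatorname{SL}_2(\mathbb R)$, which is abelian, and so its image $C(w)$ is abelian. This is exactly the classical fact that elliptic, parabolic and hyperbolic elements have one-parameter abelian centralizers. I would present it via the matrix argument above rather than the trichotomy.
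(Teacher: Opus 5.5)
Your proposal is correct and follows the paper's proof: the projections $\pi_i(H)$ lie in the centralizers $C(z_1)$ and $C(z_2)$, these are abelian, and so every commutator in $H$ is trivial. The only difference is that you prove the abelian-centralizer fact by a direct $\operatorname{SL}_2(\mathbb R)$ matrix computation, including the sign-twisted case $gAg^{-1}=-A$, where the paper cites the hyperbolic/parabolic/elliptic trichotomy; strictly, a trace-zero $A$ is $\operatorname{SL}_2(\mathbb R)$-conjugate to $J$ or $-J$, which is harmless because the equation is symmetric under $A\mapsto -A$.
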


\begin{proof}
Write $z=(z_1,z_2)$.  Since $z$ centralizes $H$, the image of $H$ lies in
\[
C_{\operatorname{PSL}_2(\mathbb R)}(z_1)\times C_{\operatorname{PSL}_2(\mathbb R)}(z_2).
\]
The centralizer of a nonidentity element of $\operatorname{PSL}_2(\mathbb R)$ is abelian.  Indeed, according as the element is hyperbolic, parabolic, or elliptic, its centralizer is conjugate to a one-parameter diagonal subgroup, a one-parameter unipotent subgroup, or the rotation subgroup.  If both $z_1$ and $z_2$ are nontrivial, the displayed product is abelian, contradicting the fact that $H$ is nonabelian.
\end{proof}

\begin{theorem}\label{thm:lattice}
Let $Z$ be a direct mixed sum formed from the explicit cyclic equatorial
multisections of Section~2.  Then:
\begin{enumerate}[label=(\arabic*)]
\item $\pi_1(Z)$ is not word-hyperbolic;
\item $\pi_1(Z)$ is not a cocompact lattice in $\operatorname{PO}(n,1)$ for any $n\geq2$;
\item $\pi_1(Z)$ is not a cocompact lattice in $\operatorname{PU}(n,1)$ for any $n\geq2$;
\item $\pi_1(Z)$ is not a torsion-free cocompact irreducible lattice in $\operatorname{PSL}_2(\mathbb R)\times\operatorname{PSL}_2(\mathbb R)$.
\end{enumerate}
In particular, it is not a cocompact Deligne--Mostow lattice in $\operatorname{PU}(2,1)$.
\end{theorem}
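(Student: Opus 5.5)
The common input for all four parts is Corollary~\ref{cor:pi1}: $\pi_1(Z)$ contains $\pi_1(\Sigma_G)\times\mathbb Z$ with $G\geq2$. In particular it contains $\mathbb Z^2$, for instance a nonseparating curve class times the normal circle. Each part reduces to the fact that the relevant class of groups cannot contain such a subgroup.

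For (1), a word-hyperbolic group contains no subgroup isomorphic to $\mathbb Z^2$. This is standard: the centralizer of an infinite-order element is virtually cyclic. So the embedded $\mathbb Z^2$ suffices.

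For (2) and (3), a cocompact lattice in $\operatorname{PO}(n,1)$ or $\operatorname{PU}(n,1)$ acts properly and cocompactly by isometries on real or complex hyperbolic space. Both are CAT$(-1)$ (or negatively pinched) spaces, so by \v{S}varc--Milnor the lattice is quasi-isometric to that space. Hence it is word-hyperbolic, and part (1) applies. Torsion in the lattice causes no difficulty, since hyperbolicity passes through the quasi-isometry. The Deligne--Mostow conclusion is the special case $\operatorname{PU}(2,1)$ of (3).

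For (4), suppose $\pi_1(Z)=\Gamma$ is a torsion-free cocompact irreducible lattice in $\operatorname{PSL}_2(\mathbb R)\times\operatorname{PSL}_2(\mathbb R)$. Apply Lemma~\ref{lem:centralizer} with $H=\pi_1(\Sigma_G)$ and $z$ the normal-circle generator. It then suffices to show that both projections $z_1,z_2$ of $z$ are nontrivial. Suppose instead that $z_2=1$, so $z=(z_1,1)$ with $z_1\neq1$ because $z$ has infinite order. Then $\Gamma\cap(\operatorname{PSL}_2(\mathbb R)\times\{1\})$ is a nontrivial subgroup. It is normal in $\Gamma$ since it is the kernel of the second projection. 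Its image in the first factor is normalized by the projection $\mathrm{pr}_1(\Gamma)$, which is dense by irreducibility. Being discrete, this image would then be normalized by all of $\operatorname{PSL}_2(\mathbb R)$, by continuity of the normalizer condition on the closure. A discrete normal subgroup of a connected simple group is central, hence trivial, which is a contradiction. The case $z_1=1$ is symmetric.

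The main obstacle is the discreteness step in (4). The kernel $\Gamma\cap(\operatorname{PSL}_2\times 1)$ is discrete in $\operatorname{PSL}_2(\mathbb R)$ because $\Gamma$ is discrete. Its normalizer is closed and contains the dense $\mathrm{pr}_1(\Gamma)$, so it is everything. A discrete normal subgroup of the connected group $\operatorname{PSL}_2(\mathbb R)$ is central, since conjugation orbits of a point are connected and lie in a discrete set, and the center is trivial. This closes the argument. Torsion-freeness is not strictly needed here, but it matches the hypothesis as stated.
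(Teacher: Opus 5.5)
Your proposal is correct and follows essentially the same route as the paper. It uses the $\mathbb Z^2\subset\pi_1(\Sigma_G)\times\mathbb Z$ obstruction for (1) and Milnor--\v{S}varc for (2)--(3); for (4) it uses density of the projections plus the closed-normalizer argument to show $\Gamma\cap(\operatorname{PSL}_2(\mathbb R)\times\{1\})$ is trivial, and then applies Lemma~\ref{lem:centralizer}. The only difference is cosmetic: the paper first proves that both coordinate projections are injective on all of $\Gamma$ and then specializes to $z$, whereas you run the same argument by contradiction directly on $z$.
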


\begin{proof}
By Corollary~\ref{cor:pi1}, the group $\pi_1(Z)$ contains
$\pi_1(\Sigma_G)\times\mathbb Z$, and hence contains $\mathbb Z^2$.
Every abelian subgroup of a word-hyperbolic group is virtually cyclic.
It follows that $\pi_1(Z)$ is not word-hyperbolic, proving (1).

Let $X$ be either real hyperbolic $n$-space or complex hyperbolic
$n$-space.  A cocompact lattice in $\operatorname{Isom}(X)$ acts properly
and cocompactly on $X$.  By the Milnor--\v{S}varc lemma it is
quasi-isometric to $X$, and is therefore word-hyperbolic because $X$ is a
proper geodesic Gromov-hyperbolic space; see \cite{BridsonHaefliger}.
Thus (1) implies (2) and (3).

For (4), suppose that $\pi_1(Z)$ is isomorphic to a torsion-free
cocompact irreducible lattice
\[
\Gamma<\operatorname{PSL}_2(\mathbb R)\times
\operatorname{PSL}_2(\mathbb R).
\]
Let $\operatorname{pr}_i$ denote projection to the $i$th factor.  By
irreducibility, both $\operatorname{pr}_1(\Gamma)$ and
$\operatorname{pr}_2(\Gamma)$ are dense.  Set
\[
K_1=\Gamma\cap
\bigl(\operatorname{PSL}_2(\mathbb R)\times\{1\}\bigr).
\]
Then $K_1$ is discrete and normal in $\Gamma$.  Its projection to the
first factor is normalized by the dense subgroup
$\operatorname{pr}_1(\Gamma)$.  Since the normalizer of a closed subgroup
is closed, $\operatorname{pr}_1(K_1)$ is normal in all of
$\operatorname{PSL}_2(\mathbb R)$.  It is therefore a discrete normal
subgroup of the connected center-free simple group
$\operatorname{PSL}_2(\mathbb R)$, and hence is trivial.  Thus $K_1$ is
trivial.  The same argument for the other factor shows that both
coordinate projections are injective on $\Gamma$.

Under the assumed isomorphism, Corollary~\ref{cor:pi1} gives a subgroup
$\pi_1(\Sigma_G)\times\langle z\rangle$ of $\Gamma$, where $z$ has
infinite order.  Here $G=p(g-1)+1\geq3$, so $\pi_1(\Sigma_G)$ is
nonabelian.  The injectivity of the two coordinate projections implies
that both coordinates of $z$ are nontrivial, contradicting
Lemma~\ref{lem:centralizer}.  This proves (4).  Finally, any
Deligne--Mostow lattice in $\operatorname{PU}(2,1)$ is a
complex-hyperbolic lattice, so its cocompact instances are excluded by
(3) \cite{DeligneMostow}.
\end{proof}

\subsection{Real and complex hyperbolic hybrids}
Gromov and Piatetski-Shapiro \cite{GPS} constructed nonarithmetic lattices in real-hyperbolic isometry groups by cutting arithmetic hyperbolic manifolds along totally geodesic hypersurfaces and gluing pieces coming from noncommensurable arithmetic manifolds.  On fundamental groups their construction is governed by an amalgam over the hypersurface group.  The direct braided sums above also give an amalgam,
\[
\pi_1(Z)=\pi_1(M_{g,p})*_{\pi_1(\Sigma_G)\times\mathbb Z}\pi_1(M_{h,q}),
\]
but the edge group contains $\mathbb Z^2$.  Theorem~\ref{thm:lattice} therefore rules out a cocompact real-hyperbolic lattice.  The common feature is only the graph-of-groups decomposition, not a hyperbolic uniformization.

There is a parallel comparison in complex hyperbolic geometry.  In
complex dimension two, Deligne and Mostow \cite{DeligneMostow}
constructed lattices in $\operatorname{PU}(2,1)$ from hypergeometric
monodromy, including nonarithmetic examples.  Falbel and Pasquinelli
proved that the five nonarithmetic complex two-dimensional
Deligne--Mostow lattices treated in their theorem can be
realized as hybrids generated by two noncommensurable arithmetic
Fuchsian lattices in $\operatorname{PU}(1,1)$
\cite[Theorem~4.2]{FalbelPasquinelli}.  For the three smallest mixed sums,
the degree data give the Euclidean triangle triples $(2,3,6)$,
$(2,4,4)$, and $(3,3,3)$; see \cite{AkhYeung}.  The higher cases with the
same numerical relation are described as follows.

\begin{proposition}\label{prop:triangletypes}
Let $k=2t-1$ and suppose that $t\mid n$.  Put $r=n/t$.  Up to
interchanging $p$ and $q$, the numerical types satisfying
\[
\frac1p+\frac1q+\frac1r=1
\]
are exactly the following:
\[
\begin{array}{c|c|c|c}
(p,q,r)&n&(g-1,h-1)&\text{condition}\ \\ \hline
(3,3,3)&3t&(t,t)&\text{all }t\\
(4,2,4)&4t&(t,2t)&\text{all }t\\
(4,4,2)&2t&(t/2,t/2)&2\mid t\\
(3,2,6)&6t&(2t,3t)&\text{all }t\\
(6,2,3)&3t&(t/2,3t/2)&2\mid t\\
(6,3,2)&2t&(t/3,2t/3)&3\mid t.
\end{array}
\]
\end{proposition}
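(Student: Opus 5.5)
The plan is to reduce everything to the enumeration of positive integer solutions of $\frac1p+\frac1q+\frac1r=1$. The only such triples, up to permutation, are the Euclidean triangle triples $\{3,3,3\}$, $\{2,4,4\}$, $\{2,3,6\}$. These are then matched with the numerical data of Theorem~\ref{thm:classification}. The key identity comes from \eqref{eq:class}: since $a=n/p$, $b=n/q$ and $n=a+b+t$, we get
\[
1=\frac1p+\frac1q+\frac tn .
\]
So for any numerical type we have $\frac1p+\frac1q<1$. Moreover, if $t\mid n$ and $r=n/t$, the displayed relation holds automatically. Conversely, the relation $\frac1p+\frac1q+\frac1r=1$ with $r=n/t$ is equivalent to \eqref{eq:class}. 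The hypothesis $t\mid n$ is exactly the statement that $r$ is an integer.

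First, I will show that the condition in the proposition is automatic for every type with $t\mid n$, so the task is to list those types. With $p,q\geq2$ and $r\geq1$ an integer, $\frac1r=1-\frac1p-\frac1q>0$ forces $r\geq2$. The standard argument then applies: order the three values, note that the smallest is at most $3$, and do a short case check. This leaves exactly the unordered triples $\{3,3,3\},\{2,4,4\},\{2,3,6\}$.

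Second, I will distribute these values among $(p,q,r)$ up to swapping $p$ and $q$. The triple $\{3,3,3\}$ gives one ordered type. The triple $\{2,4,4\}$ gives $r=2$ or $r=4$, that is, $(4,4,2)$ and $(4,2,4)$. The triple $\{2,3,6\}$ gives $r\in\{2,3,6\}$, that is, $(6,3,2)$, $(6,2,3)$ and $(3,2,6)$. This yields six rows, matching the table.

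Third, for each row I will compute $n=rt$, $g-1=n/p$ and $h-1=n/q$. For example, the row $(6,3,2)$ gives $n=2t$, $a=t/3$ and $b=2t/3$. The integrality conditions $a,b\in\Z_{>0}$ then produce the listed divisibility conditions on $t$: none, $2\mid t$, or $3\mid t$. I also need $g,h\geq2$, i.e.\ $a,b\geq1$, which holds whenever $a,b$ are positive integers.

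Finally, I will check the converse through Theorem~\ref{thm:classification}. Each listed $(a,b)$ satisfies $n=a+b+t$, because $\frac1p+\frac1q+\frac1r=1$. Integrality of $p=n/a$ and $q=n/b$ is built in, so \eqref{eq:div} holds and each row is a genuine numerical type.

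I expect no step to be a real obstacle; the main care is bookkeeping. In particular, $t\mid n$ must be used only to define $r$, and it must not be confused with the integrality of $a$ and $b$.
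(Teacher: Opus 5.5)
Your proposal is correct and follows the paper's proof essentially step for step. You rewrite \eqref{eq:class} as $\frac1p+\frac1q+\frac tn=1$, so that $t\mid n$ makes the relation automatic with $r=n/t$; you enumerate the Euclidean triples with $r$ placed in the distinguished slot to obtain the six rows; and you read off the integrality conditions on $g-1=n/p$ and $h-1=n/q$.

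One small slip: $r\geq2$ follows from $\frac1r=1-\frac1p-\frac1q<1$ (equivalently $n=a+b+t>t$), not from the positivity of $1-\frac1p-\frac1q$.
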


\begin{proof}
The classification equation may be written as
\[
\frac1p+\frac1q+\frac{t}{n}=1.
\]
Whenever $t\mid n$, write $n=tr$.  Then
\[
\frac1p+\frac1q+\frac1r=1.
\]
Since $p,q,r\geq2$, the triple $(p,q,r)$ is a permutation of
\[
(2,3,6),\qquad (2,4,4),\qquad (3,3,3).
\]
After ordering the three integers, the smallest is at most three; the
cases two and three give exactly the triples above.  Since $r$ is
distinguished, placing it in each of the possible positions gives the six
rows of the table.  A placement gives a numerical type precisely when
\[
g-1=\frac{tr}{p},\qquad h-1=\frac{tr}{q}
\]
are integers.  These are exactly the divisibility conditions displayed
in the last column.
\end{proof}

Proposition~\ref{prop:triangletypes} gives only a numerical connection.  It
supplies neither a homomorphism from the mixed-sum group to a Euclidean
triangle group nor an identification of the cyclic covering monodromy
with a triangle-group representation or with Deligne--Mostow
hypergeometric monodromy.  Moreover, these Euclidean triangle groups are
different from the hyperbolic Fuchsian triangle lattices used in the
Falbel--Pasquinelli hybrid construction.  We will study the possible
geometric significance of this relation in a subsequent work.  Since a
cocompact complex-hyperbolic lattice is word-hyperbolic and every direct
mixed-sum group contains $\mathbb Z^2$, the present construction cannot
produce such a lattice.

The cocompact qualification is essential in the Deligne--Mostow comparison.
Nonuniform complex-hyperbolic lattices have parabolic subgroups, so the
$\mathbb Z^2$ argument alone does not exclude them.  The bidisk has higher
rank and products of surface groups contain $\mathbb Z^2$; consequently,
the obstruction to an irreducible bidisk lattice requires the centralizer
argument of Lemma~\ref{lem:centralizer}.  This argument does not exclude
reducible bidisk lattices.

\section{Minimality}

\begin{theorem}\label{thm:min}
Every symplectic direct mixed sum considered in this paper is symplectically minimal.
\end{theorem}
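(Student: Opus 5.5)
The plan is to apply Usher's minimality theorem for symplectic sums. That theorem says the following. Let $X=X_1\#_{F_1=F_2}X_2$ be the symplectic sum along surfaces $F_i$ of positive genus $G$ with $F_i^2=0$. Then $X$ is minimal unless one of two degenerate situations occurs:
\begin{enumerate}[label=(\roman*)]
\item some $X_i\setminus F_i$ contains an exceptional sphere disjoint from $F_i$;
\item some $X_i$ is a ruled surface (or blown-up rational surface) in which $F_i$ is a section or a multisection of a specific low-degree type, namely $X_i$ is an $S^2$-bundle over a surface and $F_i$ is a section.
\end{enumerate}
Concretely, Usher's exceptional cases are: $X_1=\mathbb{CP}^2\#\ldots$ with $F$ a sphere/conic, or $X_i$ an $S^2$-bundle over $\Sigma_G$ with $F_i$ a section. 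So the proof reduces to checking that neither summand falls into Usher's list.

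First, each summand $\Sigma_g\times S^2$ with $g\geq2$ is minimal. Its second homotopy group consists only of multiples of the fiber sphere, the fiber class has square zero, and no class of square $-1$ is represented by a sphere. In fact $\Sigma_g\times S^2$ is relatively minimal with $b_2=2$ and has no classes of square $-1$ at all, since its intersection form is $H$. Hence no exceptional sphere exists in $X_i\setminus F_i$.

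Second, I must rule out the ruled exceptional case. Here $F_{g,p}$ has genus $G=p(g-1)+1>g$ and class $p[\Sigma_g\times\{\mathrm{pt}\}]$ with $p\geq2$. It is therefore not a section, whose genus would equal $g$ and whose fiber intersection number would be $1$. The key point is that Usher's theorem, in the form used for ruled surfaces, excludes exactly the case where $F_i$ is a section of an $S^2$-bundle over a genus-$G$ base. Here the base genus is $g<G$, and $F$ meets the fiber $p\geq2$ times. I would quote the precise statement, Usher, ``Minimality and symplectic sums,'' Theorem~1.1: the sum is minimal unless some $X_i$ is $S^2$-bundle over $\Sigma$ with $F_i$ a section, or $F_i$ is a sphere. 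Since $G\geq3$, $F$ is not a sphere. Since $F\cdot[\mathrm{fiber}]=p\geq2$, $F$ is not a section. Both exceptions therefore fail.

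The main obstacle is verifying that the multisection case is genuinely outside Usher's exceptions. The fiber $S^2$ meets $F$ in $p$ points, so after removing $\nu F$ the fiber becomes a $p$-punctured sphere. In the section case it would instead become a disk, and that disk is what could glue to a sphere of square $-1$. I would double-check this via the mechanism of Usher's proof. An exceptional class in $Z$ must meet the gluing region, and its pieces in each complement are holomorphic curves relative to the boundary. In $M_{g,p}$ the only genus-zero candidates are the punctured fibers $P_p$, which have $p\geq2$ boundary components. Matching across the neck would require the other side to supply disks capping each puncture. Incompressibility (Theorem~\ref{thm:incomp}) and the absence of spherical classes meeting $F$ once prevent any sphere from closing up. This is consistent with Usher's exclusion.
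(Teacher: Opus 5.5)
Your proposal is correct and follows the paper's proof: apply Usher's minimality theorem, observe that $\Sigma_g\times S^2$ has even intersection form $H$ and so contains no exceptional sphere, and note that $F_{g,p}$, of genus $p(g-1)+1>g$, cannot be a section of any ruling. Your paraphrase of Usher's exceptional list is looser than the actual statement, and your closing heuristic about holomorphic pieces and incompressibility is not needed. Neither point affects the argument.
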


\begin{proof}
Each summand $\Sigma_g\times S^2$, with $g\geq 2$, is a minimal ruled surface.  The gluing surface $F_{g,p}$ has genus
\[
p(g-1)+1>g
\]
for $p\geq 2$.  It cannot be a section of a ruling, since a section has the genus of the base.  Usher's minimality theorem for symplectic sums \cite{Usher} therefore applies.  Indeed, neither summand contains an exceptional sphere, and the ruled exception in Usher's theorem occurs only when the sum surface is a section.  Both exceptional alternatives are absent.
\end{proof}

\begin{corollary}
Every rational-cohomology example of Theorem~\ref{thm:existence} has symplectic Kodaira dimension two.
\end{corollary}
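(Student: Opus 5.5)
The plan is to get Kodaira dimension two from minimality together with the two numerical invariants $K^2$ and $b_2^+$. For a closed minimal symplectic four-manifold $(Z,\omega)$, the symplectic Kodaira dimension is determined by the signs of
\[
K_\omega^2 \quad\text{and}\quad K_\omega\cdot[\omega].
\]
It equals $2$ exactly when $K_\omega^2>0$ and $K_\omega\cdot[\omega]>0$. Theorem~\ref{thm:min} says every example of Theorem~\ref{thm:existence} is minimal, so only these two signs need to be checked.

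First, compute $K^2$. By Theorem~\ref{thm:existence} we have $e(Z)=2+2k$ and $\sigma(Z)=0$, so
\[
K^2=2e+3\sigma=4+4k>0 .
\]
Once $K^2>0$ is known for a minimal manifold, the only alternatives to $\kappa=2$ are $\kappa=-\infty$ (rational or ruled) and the case of negative $K\cdot[\omega]$. The case $\kappa=0$ is excluded because it forces $K$ to be torsion, hence $K^2=0$. The case $\kappa=1$ is excluded because it forces $K^2=0$.

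Second, exclude $\kappa=-\infty$. A minimal rational or ruled manifold with $b_1=0$ is $\mathbb{CP}^2$ or $S^2\times S^2$, and these have $b_2\le 2$. Here $b_2(Z)=2k\ge 2$, with $b_1=0$. For $k\ge 3$ this is already impossible. For $k=1$ the alternative would be $S^2\times S^2$, which has $K^2=8$ and so agrees numerically. It is excluded instead by Corollary~\ref{cor:pi1}: $\pi_1(Z)$ is nontrivial, while rational and ruled surfaces with $b_1=0$ are simply connected. This argument is uniform in $k$, so I would use it for all $k$.

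A cleaner route to $\kappa\ge 0$ is also available. Use $b_2^+=k\ge 1$ together with Taubes: for $b_2^+>1$ this gives $K\cdot[\omega]\ge 0$, and in that case the Seiberg--Witten/Liu argument shows that minimal manifolds with $K^2>0$ have $K\cdot[\omega]>0$. When $b_2^+=1$ I would rely on the $\pi_1$ argument above, since minimal manifolds of negative Kodaira dimension are exactly $\mathbb{CP}^2$, the $S^2$-bundles over surfaces, and their blowups. The $S^2$-bundles over surfaces with $b_1=0$ are the simply connected ones.

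The main obstacle is citing the correct form of Li's characterization of symplectic Kodaira dimension, namely that minimal manifolds with $K^2>0$ and $\kappa\ne-\infty$ have $\kappa=2$. Everything else reduces to $K^2=4+4k$ and the nontriviality of $\pi_1$.
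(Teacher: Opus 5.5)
Your proposal is correct and follows essentially the same route as the paper: minimality from Theorem~\ref{thm:min}, $K_\omega^2=2e+3\sigma=4(k+1)>0$, exclusion of the rational and ruled alternatives using $b_1(Z)=0$ and the nontrivial fundamental group of Corollary~\ref{cor:pi1}, and then Liu's theorem to obtain $K_\omega\cdot[\omega]>0$. The differences are minor. The paper rules out positive-genus ruled surfaces directly by $b_1>0$ rather than by simple connectivity. Your optional Taubes argument for $b_2^+>1$ and your $b_2$ count for $k\geq 3$ are correct, but the paper does not need them.
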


\begin{proof}
The examples are minimal.  They are not rational, since rational
four-manifolds are simply connected whereas Corollary~\ref{cor:pi1}
gives a nontrivial fundamental group containing $\mathbb Z^2$.  They are
not ruled over a positive-genus surface, since such a ruled manifold has
positive first Betti number whereas $b_1(Z)=0$.  They have
\[
K_\omega^2=2e+3\sigma=4(k+1)>0.
\]
Since the manifold is minimal, nonrational, and nonruled, the main theorem of Liu \cite{Liu}, together with $K_\omega^2>0$, gives $K_\omega\cdot[\omega]>0$.
\end{proof}

\section{Spin gluings}

We prove that every higher numerical type has a spin realization.  A
spin structure on a closed oriented surface will be identified with its
quadratic refinement on $H_1(-;\Z/2)$.  The normal framings in this
section are chosen to be compatible with the prescribed spin structures;
they need not agree with a fixed equatorial framing.

\begin{lemma}\label{lem:spinpull}
Let $q:\Sigma_G\to\Sigma_g$ be a connected cyclic covering of degree $p$
defined by a character of exact order $p$.
As the spin structure $s$ on $\Sigma_g$ varies, both Arf invariants occur
among the pullbacks $q^*s$.
\end{lemma}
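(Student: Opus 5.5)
The plan is to compare two spin structures on $\Sigma_g$ that differ by a single class in $H^1(\Sigma_g;\Z/2)$, and to show that their pullbacks to $\Sigma_G$ have different Arf invariants. Spin structures on $\Sigma_g$ form a torsor over $H^1(\Sigma_g;\Z/2)$, and pullback is equivariant: $q^*(s+c)=q^*s+q^*c$. I will use the standard formula for quadratic refinements $Q$ of the mod-$2$ intersection form on a closed surface:
\[
\operatorname{Arf}(Q+x)=\operatorname{Arf}(Q)+Q(x^\vee),
\]
where $x^\vee$ is the Poincar\'e dual of $x$, that is, the class with $x(y)=x^\vee\cdot y$. It therefore suffices to find $s$ and $c$ with $Q_{q^*s}\bigl((q^*c)^\vee\bigr)=1$.

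\textbf{Choice of basis and of $c$.} Choose the symplectic basis $a_1,b_1,\ldots,a_g,b_g$ of Lemma~\ref{lem:integralimage}, so that the character sends $a_1$ to a generator of $\Z/p$ and kills the other basis elements. Take $c$ to be the class dual to $a_1$, so that $c(b_1)=1$ and $c$ vanishes on the other basis elements, giving $c^\vee=a_1$.

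Under Poincar\'e duality, pullback in cohomology corresponds to the transfer in homology. Hence $(q^*c)^\vee=q^!(a_1)$ is the class of the full preimage of a curve representing $a_1$. Since $a_1$ maps to a generator of $\Z/p$, this preimage is a single connected curve $\tilde a_1$, which maps to $a_1$ as a $p$-fold cover. The change in the Arf invariant is therefore $Q_{q^*s}(\tilde a_1)$.

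\textbf{Evaluating $Q_{q^*s}(\tilde a_1)$.} I would describe a spin structure by the mod-$2$ winding number of a tangent framing, extended over the surface away from a point, along embedded loops; this gives $Q(\gamma)=w(\gamma)+1 \pmod 2$. The pulled-back spin structure is represented by the lifted framing $q^*$. Along $\tilde a_1$, which is embedded and traverses $a_1$ exactly $p$ times, the winding of the tangent vector relative to the lifted framing is $p\,w_s(a_1)$. Hence
\[
Q_{q^*s}(\tilde a_1)\equiv p\,w_s(a_1)+1 .
\]

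Now choose $s$ with $Q_s(a_1)=1$, that is, $w_s(a_1)=0$. This is possible because the values of $Q_s$ on a basis can be prescribed arbitrarily. Then $Q_{q^*s}(\tilde a_1)=1$, so
\[
\operatorname{Arf}(q^*(s+c))=\operatorname{Arf}(q^*s)+1,
\]
and both Arf invariants occur among the pullbacks. The argument works for every $p\geq 2$, with no parity split needed.

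\textbf{Main obstacle.} The delicate step is the winding computation, namely the claim that the lifted framing has winding $p\,w_s(a_1)$ along the connected lift. I would verify it locally: $q$ restricted to an annular neighborhood of $\tilde a_1$ is the $p$-fold cyclic cover of an annular neighborhood of $a_1$. Both the tangent field and the framing lift, so the relative rotation number is multiplied by the covering degree. The rest of the argument is linear algebra over $\Z/2$, provided one checks the transfer/duality identification $(q^*c)^\vee=q^!(c^\vee)$ with mod-$2$ coefficients.
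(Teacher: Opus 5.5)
Your proof is correct, and it takes a genuinely different route from the paper's.

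The paper's route is complex-analytic. It chooses a complex structure and represents $s$ by a theta characteristic $L$. The projection formula gives $h^0(\Sigma_G,q^*L)=\sum_j h^0(\Sigma_g,L\otimes A^j)$. Serre duality and Riemann--Roch pair the characters $j$ and $-j$, and Atiyah's mod-two index theorem converts $h^0 \bmod 2$ into the Arf invariant. This forces a parity split. For $p$ odd the paper gets $\operatorname{Arf}(q^*s)=\operatorname{Arf}(s)$. For $p$ even it gets $\operatorname{Arf}(q^*s)=q_s(\operatorname{PD}\eta)$ with $\eta=A^{p/2}$, and only then varies $s$.

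Your route stays topological. You combine the Arf difference formula, the transfer identification $(q^*c)^\vee=q^!(a_1)=[\tilde a_1]$, and the fact that winding numbers multiply by $p$ under the $p$-fold cover $\tilde a_1\to a_1$. This gives $\operatorname{Arf}(q^*(s+c))-\operatorname{Arf}(q^*s)\equiv p\,w_s(a_1)+1$ uniformly in $p$.

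The two computations agree:
\begin{itemize}
\item For $p$ odd, your difference is $Q_s(a_1)=\operatorname{Arf}(s+c)-\operatorname{Arf}(s)$, consistent with $\operatorname{Arf}(q^*s)=\operatorname{Arf}(s)$.
\item For $p$ even, your difference is always $1$. This matches $q_{s+c}(b_1)=q_s(b_1)+1$, because $\operatorname{PD}\eta=b_1$ in your basis and $c(b_1)=1$.
\end{itemize}

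Your argument avoids the auxiliary complex structure, the mod-two index theorem, and the theta-characteristic description of $q_s$, and it needs no case split. The paper's argument yields more: the exact value of $\operatorname{Arf}(q^*s)$, not just how it changes when $s$ is shifted.

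One detail is worth stating explicitly. The lifted vector field has $p$ zeros, each of the even index $2-2g$. Mod-two winding numbers with respect to it therefore compute $Q_{q^*s}$, which justifies your formula $Q_{q^*s}(\tilde a_1)\equiv p\,w_s(a_1)+1$.
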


\begin{proof}
Choose a complex structure downstairs and lift it to the cover.  Represent
$s$ by a theta characteristic $L$.  If $A$ is the flat line bundle of
exact order $p$ defining the cyclic cover, then, up to replacing $A$ by
its inverse,
\[
q_*\mathcal O_{\Sigma_G}=\bigoplus_{j=0}^{p-1}A^j.
\]
The projection formula gives
\[
h^0(\Sigma_G,q^*L)=\sum_{j=0}^{p-1}h^0(\Sigma_g,L\otimes A^j).
\]
Serre duality, Riemann--Roch, and $L^2=K$ identify modulo two the terms
indexed by $j$ and $-j$.
They therefore cancel in pairs modulo two.  If $p$ is odd, only the
trivial character remains, and Atiyah's mod-two index theorem
\cite{AtiyahSpin} gives
\[
\operatorname{Arf}(q^*s)=\operatorname{Arf}(s).
\]
Both values occur among spin structures on $\Sigma_g$.

Suppose that $p$ is even.  The two unpaired characters are the trivial
character and the nontrivial order-two character $\eta=A^{p/2}$.  We also write
$\eta$ for the corresponding nonzero class in
$H^1(\Sigma_g;\mathbb Z/2)$.  If $q_s$ is the
quadratic refinement associated to $s$, then
\[
q_s(\operatorname{PD}\eta)
\equiv h^0(\Sigma_g,L\otimes\eta)+h^0(\Sigma_g,L)\pmod2;
\]
this is the theta-characteristic definition of the quadratic refinement
\cite{MumfordTheta}.  The projection formula and the cancellation of the
paired characters therefore give
\[
\operatorname{Arf}(q^*s)=q_s(\operatorname{PD}\eta).
\]
For $\alpha\in H^1(\Sigma_g;\mathbb Z/2)$,
\[
q_{s+\alpha}(\operatorname{PD}\eta)
=q_s(\operatorname{PD}\eta)
+\langle\alpha\smile\eta,[\Sigma_g]\rangle.
\]
Since $\eta\ne0$ and the intersection pairing is nondegenerate, the last
term takes both values.  Hence both pullback parities occur.  The mod-two
index is topological, so the result is independent of the auxiliary
complex structure.
\end{proof}

\begin{lemma}\label{lem:spinframing}
Let $X$ be an oriented spin four-manifold and let $F\subset X$ be an
oriented embedded surface with trivial oriented normal bundle.  Given a
spin structure $s_F$ on $F$, there is an oriented normal framing for which
the spin structure induced on $TF$ from
\[
TX|_F=TF\oplus\nu F
\]
is $s_F$.  In the corresponding tubular coordinates the boundary spin
structure on $F\times S^1$ is
$s_F\boxtimes s_{\mathrm{bnd}}$, where $s_{\mathrm{bnd}}$ is the
bounding spin structure on the circle.  If $F$ is symplectic and
$F^2=0$, the framing may be chosen symplectic.
\end{lemma}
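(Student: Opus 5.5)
The plan is to treat spin structures on $X$ restricted to $F$ through the splitting $TX|_F=TF\oplus\nu F$, and to show that changing the normal framing moves the induced spin structure on $TF$ through every possible value.

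\emph{Step 1: reduce to an affine-torsor statement.} Fix the spin structure on $X$ and restrict it to $TX|_F$. A choice of oriented normal framing trivializes $\nu F$ as an $SO(2)$-bundle. Give $\nu F\cong F\times\R^2$ the product spin structure, namely the one pulled back from a point. Spin structures on $TX|_F$ then correspond bijectively to spin structures on $TF$, via the standard two-out-of-three principle for $w_2$ and spin structures on direct sums. Call the resulting spin structure $s(\tau)$, where $\tau$ is the framing.

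\emph{Step 2: change the framing.} Two framings differ by a map $f\colon F\to SO(2)=U(1)$, whose class lies in $H^1(F;\Z)$. Changing $\tau$ by $f$ changes the chosen spin structure on the trivial normal bundle by the reduction $f^*\theta \bmod 2\in H^1(F;\Z/2)$. The reason is that $\pi_1(SO(2))\to\pi_1(SO)$ is surjective onto $\Z/2$: one full twist of the normal plane along a loop flips the spin lift along that loop. By the additivity in Step 1, $s(\tau\cdot f)=s(\tau)+(f\bmod 2)$. Reduction $H^1(F;\Z)\to H^1(F;\Z/2)$ is surjective, and spin structures on $F$ form a torsor over $H^1(F;\Z/2)$. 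Hence every $s_F$ arises for a suitable framing. This is the step I expect to be the main obstacle, because the sign conventions in identifying the framing change with the $H^1$ action must be checked honestly. I would verify it on a single loop $\gamma\times D^2$, where the comparison is the generator of $\pi_1(SO(3))$ restricted from $\pi_1(SO(2))$.

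\emph{Step 3: the boundary spin structure.} In tubular coordinates $\nu F=F\times D^2$, the spin structure on $X$ restricts to $s_F\boxtimes s_{D^2}$. Restricting to $F\times S^1=\partial(F\times D^2)$, the disk structure induces the bounding structure on $S^1$, so the boundary carries $s_F\boxtimes s_{\mathrm{bnd}}$. The outward-normal convention contributes only the standard trivial factor.

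\emph{Step 4: the symplectic case.} If $F$ is symplectic with $F^2=0$, the symplectic normal bundle is a trivial $U(1)$-bundle. Fix one symplectic trivialization, as used in the symplectic neighborhood theorem. All other trivializations differ by maps $F\to U(1)$, and these realize all of $H^1(F;\Z)$, as noted in Section~2. Step 2 then shows that some symplectic trivialization induces $s_F$. The symplectic neighborhood theorem then provides symplectic tubular coordinates compatible with this framing.
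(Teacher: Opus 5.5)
Your proposal is correct and follows the paper's proof. Both arguments start from one framing and note that framings differ by maps $F\to SO(2)=U(1)$ realizing all of $H^1(F;\Z)$. Both then use that twisting changes the induced tangential spin structure by the mod-two reduction, which is onto $H^1(F;\Z/2)$, read off the bounding structure on the normal circle from the disk, and observe that in the symplectic case the twists are $U(1)$-valued. Your Step 1 bookkeeping and the loop-by-loop check in Step 2 simply spell out what the paper asserts without detail.
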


\begin{proof}
Start with a normal framing inducing a spin structure $s_0$ on $F$.  The
difference $s_F-s_0$ is a class $\delta\in H^1(F;\mathbb Z/2)$.  The
reduction map $H^1(F;\mathbb Z)\to H^1(F;\mathbb Z/2)$ is onto.  Choose
an integral lift of $\delta$ and represent it by a map $F\to SO(2)$.
Twisting the framing by this map changes the induced tangential spin
structure by $\delta$.  The spin structure on the normal disk restricts
to the bounding spin structure on its boundary.  In the symplectic case
the same twist lies in $SO(2)=U(1)$ and preserves the symplectic normal
trivialization.
\end{proof}

\begin{theorem}\label{thm:spin}
For every numerical type in Theorem~\ref{thm:classification}, there are
spin-compatible symplectic normal framings and a product gluing relative
to those framings such that
\[
H_1(Z;\Q)=0,\qquad w_2(Z)=0.
\]
For such a gluing,
\[
Q_{Z,\mathrm{free}}\cong kH.
\]
\end{theorem}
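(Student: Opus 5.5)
We prove Theorem~\ref{thm:spin} in three stages: choose spin structures on the multisections so that they can be matched, arrange the surface gluing to be simultaneously spin-preserving and of maximal rank, and then read off the lattice from the unimodular classification.

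\textbf{Stage 1: matching spin structures.} Each summand $\Sigma_g\times S^2$ is spin. Its spin structures restrict to $F_{g,p}$ as pullbacks $q^*s$ twisted by the normal framing, and by Lemma~\ref{lem:spinframing} we may prescribe the induced spin structure on $F_{g,p}$ by choosing a symplectic normal framing. The useful constraint is on the tangential spin structure of $F_{g,p}$ inside a spin ambient with trivial normal bundle. Since the framing can be twisted by any class of $H^1(F;\Z/2)$, any spin structure on $F_{g,p}$ can be realized. Lemma~\ref{lem:spinpull} is a convenient way to see that both Arf invariants occur with the untwisted product-type choice. On each side we fix spin structures $s_1$ on $F_{g,p}$ and $s_2$ on $F_{h,q}$ with equal Arf invariant, together with the corresponding spin-compatible symplectic framings. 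The boundary spin structures are then $s_i\boxtimes s_{\mathrm{bnd}}$. A product gluing $\varphi=f\times(\text{reflection})$ carries one to the other precisely when $f^*s_2=s_1$, since reflection of the circle preserves the bounding structure. Spin structures on the two pieces then glue to a spin structure on $Z$, giving $w_2(Z)=0$.

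\textbf{Stage 2: a gluing that is spin and rationally acyclic (main obstacle).} We need $f$ to realize a symplectic matrix $T$ with $\rk\Theta_T=2g+2h$, as in Theorem~\ref{thm:existence}, and at the same time to satisfy $f^*s_2=s_1$. Equal Arf invariant gives one diffeomorphism $f_0$ with $f_0^*s_2=s_1$. Every other such gluing is $f_0\circ h$ with $h$ in the spin mapping class group of $(\Sigma_G,s_1)$. Its image in $\Sp(2G,\Z)$ is the stabilizer of the quadratic form $q_{s_1}$, which contains the level-two congruence subgroup $\Gamma(2)$. That subgroup has finite index, so it is still Zariski dense in $\Sp(2G,\R)$ by \cite{Borel}. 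The maximal-rank condition on $T_0H$ is a nonempty Zariski-open condition in $H$: it is the translate of the open set produced by Lemma~\ref{lem:sympposition}, and it is nonempty over $\R$. Hence some $H\in\Gamma(2)$ works. We realize $H$ by a spin mapping class and isotope the result to be area-preserving by Moser. Since this isotopy preserves the spin structure, we obtain a spin product gluing with $\Theta_T$ onto. Exactly as in Theorem~\ref{thm:existence}, the meridional torsion dies rationally, so $H_1(Z;\Q)=0$.

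\textbf{Stage 3: the lattice.} With $b_1=0$, $e=2+2k$ and $\sigma=0$, we get $b_2=2k$ and $b_2^\pm=k$. Poincaré duality makes $Q_{Z,\mathrm{free}}$ unimodular. Because $Z$ is spin, $w_2=0$, so by Wu's formula $x\cdot x\equiv\langle w_2,x\rangle=0\pmod 2$ for every integral class. The form is therefore even and indefinite, and the classification cited in Theorem~\ref{thm:existence} gives $Q_{Z,\mathrm{free}}\cong kH$. If $k=1$ the form is still indefinite since it has signature $(1,1)$, so the argument covers all odd $k$.
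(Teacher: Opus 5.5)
Your proposal is correct and follows the paper's proof essentially step for step. You match the boundary spin structures $s\boxtimes s_{\mathrm{bnd}}$ using spin-compatible symplectic framings, choose the surface gluing in a coset $T_0\Gamma(2)$ that meets the nonempty Zariski-open maximal-rank locus, and identify the even indefinite unimodular form as $kH$. The differences are minor: you note that Lemma~\ref{lem:spinpull} is not actually needed, since Lemma~\ref{lem:spinframing} already realizes every spin structure on the multisection (the paper instead matches Arf invariants of pullback structures), and you justify evenness via Wu's formula, which the paper simply asserts.
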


\begin{proof}
Choose spin structures $s_g$ and $s_h$ on the base surfaces so that their
pullbacks to $\Sigma_G$ have the same Arf invariant.  This is possible by
Lemma~\ref{lem:spinpull}.  Equip the two products with the corresponding
product spin structures.  We retain these pullback spin structures because
they record the restrictions of the product spin structures on the two ruled
summands; Lemma~\ref{lem:spinframing} then changes only the normal framings.
By that lemma, choose
symplectic normal framings which induce $q_g^*s_g$ and $q_h^*s_h$ on the
two multisections.  The boundary spin structures are then
\[
q_g^*s_g\boxtimes s_{\mathrm{bnd}},\qquad
q_h^*s_h\boxtimes s_{\mathrm{bnd}}.
\]

The group $\Sp(2G,\Z/2)$ is transitive on quadratic refinements of fixed
Arf invariant \cite{MumfordTheta}.  Reduction
\[
\Sp(2G,\Z)\longrightarrow \Sp(2G,\Z/2)
\]
is onto: the elementary symplectic transvections generating the target
lift integrally.  The mapping class group surjects onto $\Sp(2G,\Z)$.
Write $\bar q_g$ and $\bar q_h$ for the two pulled-back quadratic
refinements.  Fix an integral symplectic matrix $T_0$ whose mod-two action
satisfies
\[
T_0^*\bar q_h=\bar q_g.
\]
Every matrix in $T_0\Gamma(2)$ has the same mod-two action.
Here
\[
\Gamma(2)=\ker\bigl(\Sp(2G,\mathbb Z)\longrightarrow
\Sp(2G,\mathbb Z/2)\bigr).
\]

Let $U\subset\Sp(2G,\R)$ be the locus on which
\[
(Q_{p,g},-Q_{q,h}T)
\]
has rank $2g+2h$.  The proof of Theorem~\ref{thm:existence} shows that
$U$ is nonempty and Zariski open.  The subgroup $\Gamma(2)$ has finite
index in $\Sp(2G,\mathbb Z)$ and is therefore Zariski dense.  Hence its
translate $T_0\Gamma(2)$ meets the nonempty Zariski-open set $U$.  Choose a
matrix in the intersection, realize it by a surface diffeomorphism, and
isotope that diffeomorphism to preserve an area form.

Glue this surface map to an orientation-reversing diffeomorphism of the
normal circle.  The surface map identifies the tangential spin
structures.  With the boundary orientations induced from the two
complements, the reversal of the normal circle is exactly the reversal
required by the gluing, and it preserves the bounding spin structure on
$S^1$.  The boundary spin structures therefore match and the complement
spin structures glue.  Hence $w_2(Z)=0$.  Since the chosen
matrix lies in $U$, Mayer--Vietoris gives $H_1(Z;\Q)=0$.

The free intersection form is even and unimodular, of rank $2k$ and
signature zero.  The indefinite even unimodular lattice of signature
$(k,k)$ is unique \cite[Chapter~II, Sections~4--5]{MilnorHusemoller},
hence it is $kH$.
\end{proof}

\begin{remark}
The spin conclusion concerns the closed four-manifold, not merely parity of the free lattice.  When $H_1$ has $2$-torsion, a nonzero $w_2$ can lie in the Ext term and be invisible on $H_2/\Tor$.  The theorem avoids this ambiguity by constructing a genuine spin structure.
\end{remark}

\subsection{The free intersection lattice and the canonical class}
For the spin realizations the integral lattice is completely determined.  This is the higher-$k$ analogue of the even lattice calculation in the rational cohomology quadric case.

\begin{proposition}\label{prop:canonical}
Let $Z$ be a spin realization from Theorem~\ref{thm:spin}.  Then
\[
Q_{Z,\mathrm{free}}\cong kH,
\qquad K_\omega^2=4(k+1).
\]
Moreover the image of the symplectic canonical class in $H^2(Z;\mathbb Z)/\Tor$ is characteristic and divisible by two.
\end{proposition}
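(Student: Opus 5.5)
The plan has three parts: read off the lattice from Theorem~\ref{thm:spin}, compute $K_\omega^2$ from the characteristic numbers, and identify the image of $K_\omega$ through the Wu formula.

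First, the identification $Q_{Z,\mathrm{free}}\cong kH$ is already the last assertion of Theorem~\ref{thm:spin}. For completeness we recall the ingredients. Since $H_1(Z;\Q)=0$, Poincar\'e duality makes the pairing on $H_2(Z;\Z)/\Tor$ unimodular. Its rank $2k$ and signature $0$ come from Theorem~\ref{thm:classification}, and the uniqueness of indefinite even unimodular forms then gives $kH$. Evenness follows from $w_2(Z)=0$: for any integral class $x$, $x\cdot x\equiv\langle w_2(Z),x\rangle\equiv0\pmod 2$.

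Second, for the canonical class we use the standard identity $c_1^2=2e+3\sigma$ for an almost complex closed four-manifold. With $e=2+2k$ and $\sigma=0$ this gives $K_\omega^2=4(k+1)$, matching the computation already made in the Kodaira-dimension corollary.

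Third, we show that the image $\bar K$ of $K_\omega$ in $H^2(Z;\Z)/\Tor$ is characteristic and divisible by two.
\begin{itemize}
\item Since $c_1(Z,\omega)$ reduces mod two to $w_2(Z)$, for every integral class $x$ we have $K_\omega\cdot x\equiv x\cdot x\pmod 2$. This congruence depends only on the image in $H^2/\Tor$, because torsion classes pair trivially with $H_2$. Hence $\bar K$ is characteristic for $Q_{Z,\mathrm{free}}$.
\item Because the form is even, $0$ is also characteristic. Two characteristic vectors differ by an element of $2L^*$, and $L^*=L$ by unimodularity. Therefore $\bar K\in 2(H^2/\Tor)$.
\end{itemize}
The conclusion is the general fact that characteristic vectors of an even unimodular lattice are exactly the elements of $2L$.

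The point needing the most care is the passage from $w_2(Z)=0$ to $\bar K\in 2(H^2/\Tor)$ when $H_1(Z;\Z)$ has $2$-torsion. Divisibility of $K_\omega$ itself in $H^2(Z;\Z)$ could fail through the Ext term, which is exactly the ambiguity noted in the remark after Theorem~\ref{thm:spin}. I would therefore state and prove divisibility only modulo torsion, by the lattice argument above: it uses only the pairing with $H_2/\Tor$, which is the pairing the characteristic condition involves. If one wants the stronger integral statement, one would instead use $c_1\equiv w_2=0$ in $H^2(Z;\Z/2)$ and the Bockstein sequence. This gives $K_\omega=2y+\tau$ with $\tau$ torsion, which again suffices for the statement modulo $\Tor$.
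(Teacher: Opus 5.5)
Your argument is correct. The lattice identification and the computation of $K_\omega^2$ coincide with the paper's proof. For divisibility, the paper argues directly: $K_\omega\equiv w_2(Z)=0 \pmod 2$, and it then passes to the torsion-free quotient. Your main argument instead uses only that $\bar K$ is characteristic for an \emph{even} unimodular lattice $L$, which forces $\bar K\in 2L^*=2L$.

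The two routes buy different things. Yours is slightly more general, since it needs only evenness of $Q_{Z,\mathrm{free}}$. It would still work if $w_2(Z)$ were a nonzero class invisible on $H_2/\Tor$. The paper's route uses the genuine spin structure and gives a stronger conclusion. The coefficient sequence
\[
H^2(Z;\Z)\xrightarrow{\ 2\ }H^2(Z;\Z)\longrightarrow H^2(Z;\Z/2)
\]
is exact, so $K_\omega \bmod 2 = w_2(Z)=0$ gives $K_\omega=2y$ in $H^2(Z;\Z)$ itself.

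Your caveat therefore points the wrong way. For the spin realizations of Theorem~\ref{thm:spin}, integral divisibility cannot fail through the Ext term, and the torsion correction $\tau$ in your Bockstein sentence can be taken to be $0$. The remark after Theorem~\ref{thm:spin} concerns the converse: an even free lattice need not imply $w_2(Z)=0$ when $H_1$ has $2$-torsion.

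A minor point: unimodularity of the pairing on $H_2(Z;\Z)/\Tor$ holds for every closed oriented four-manifold. The hypothesis $H_1(Z;\Q)=0$ is needed only to get rank $2k$.
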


\begin{proof}
The lattice statement is Theorem~\ref{thm:spin}.  Since $e(Z)=2+2k$ and $\sigma(Z)=0$,
\[
K_\omega^2=2e(Z)+3\sigma(Z)=4(k+1).
\]
The canonical class is characteristic, and on a spin four-manifold its
mod-two reduction is zero.  After passage to the torsion-free quotient,
its image therefore lies in $2(kH)$ and is divisible by two.
\end{proof}

Choose a hyperbolic basis $A_1,B_1,\ldots,A_k,B_k$ for $kH$.  Writing
\[
K_{\omega,\mathrm{free}}=2\sum_{i=1}^k(x_iA_i+y_iB_i)
\]
gives the numerical constraint
\[
2\sum_{i=1}^k x_i y_i=k+1.
\]
Unlike the rank-two case, this equation does not determine the canonical class up to lattice isometry without additional information.  For $k>1$, determining the lattice orbit of the canonical class is therefore an additional problem.

\section{Cyclic covers associated to the mixed-sum presentation}

For the smallest mixed sums of \cite{AkhYeung}, the common integer $n$ happened to equal $\operatorname{lcm}(p,q)$.  This need not hold in the higher classification, so we use the least common multiple as the covering degree.

Put
\[
\ell=\operatorname{lcm}(p,q).
\]
After fixing identifications of the two cyclic deck groups, let
\[
\alpha_p:\pi_1(M_{g,p})\to\Z/p,
\qquad
\alpha_q:\pi_1(M_{h,q})\to\Z/q
\]
be the characters obtained by composing the bundle projections to the
base groups with the two cyclic covering characters.

\begin{proposition}\label{prop:cover}
After fixing the two cyclic characters, identifications of their target
groups, and the standard injections into $\mathbb Z/\ell$, a direct
mixed-sum presentation determines a regular cyclic cover of degree $\ell$.
Over the first complement its inverse image is the disjoint union of $\ell/p$ copies of
\[
\Sigma_G\times P_p,
\]
and over the second complement it is the disjoint union of $\ell/q$ copies of
\[
\Sigma_G\times P_q.
\]
\end{proposition}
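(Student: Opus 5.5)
The plan is to build a homomorphism $\Phi:\pi_1(Z)\to\Z/\ell$ from the two cyclic characters and take the cover of $Z$ associated to its kernel.  The steps are: define $\Phi$ through the amalgam, check surjectivity so the cover is connected and regular of degree $\ell$, and read off the preimages of the two complements from the restricted characters.

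\emph{Defining $\Phi$.}  Let $\iota_p:\Z/p\to\Z/\ell$ and $\iota_q:\Z/q\to\Z/\ell$ be the standard injections, $1\mapsto \ell/p$ and $1\mapsto\ell/q$.  By the amalgam in Proposition~\ref{prop:bassserre}, a homomorphism out of $\pi_1(Z)$ is the same as a pair of homomorphisms on the vertex groups that agree on the edge group $\pi_1(\Sigma_G)\times\Z$.  I will use $\Phi_1=\iota_p\circ\alpha_p$ and $\Phi_2=\iota_q\circ\alpha_q$, and I claim both are trivial on the edge group.  On the first side, the surface factor $\pi_1(F_{g,p})$ maps under the covering projection into $\ker\chi$, exactly as in the proof of Theorem~\ref{thm:incomp}.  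The normal circle is vertical, so the bundle projection kills it.  The second side is symmetric, so $\alpha_q$ vanishes on the image of $\pi_1(F_{h,q})\times\Z$ as well.  Hence both composites are trivial on the edge group and $\Phi$ is well defined.  It depends only on the fixed characters and their chosen embeddings, which is why the statement fixes those data.

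\emph{Surjectivity and degree.}  Each $\alpha_p$ is onto $\Z/p$, because the base character is onto and the north-pole section splits the bundle projection.  So the image of $\Phi$ contains $\iota_p(\Z/p)+\iota_q(\Z/q)$.  This is the subgroup of $\Z/\ell$ generated by $\ell/p$ and $\ell/q$, and since $\gcd(\ell/p,\ell/q)=1$ it is all of $\Z/\ell$.  Thus $\ker\Phi$ is normal of index $\ell$, and it defines a connected regular cyclic cover $\widetilde Z\to Z$ with deck group $\Z/\ell$.

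\emph{Preimage of the first complement.}  The preimage of $M_{g,p}$ is the $\Z/\ell$-set $\Z/\ell/\operatorname{im}\Phi_1$ of components, each of which is the cover of $M_{g,p}$ associated to $\ker\Phi_1=\ker\alpha_p$.  Since $\operatorname{im}\Phi_1$ has order $p$, there are $\ell/p$ components.  The cover associated to $\ker\alpha_p$ is the pullback $q^*M_{g,p}$ along $F_{g,p}\cong\Sigma_G\to\Sigma_g$, and Lemma~\ref{lem:productcover} identifies it with $\Sigma_G\times P_p$.  The same argument gives $\ell/q$ copies of $\Sigma_G\times P_q$ over the second complement.

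\emph{Expected obstacle.}  The delicate point is not the counting but checking that the two characters vanish on the full edge group, including any framing twist $\lambda$ in the gluing.  A framing change adds meridional components to surface loops.  This is harmless here because the meridian lies in a fiber and is killed by the base projection.  Equally, the glued boundary surface loops on either side project into $\ker\chi$, because $S$ is only a reidentification of $\Sigma_G$.  I would make this explicit using Lemma~\ref{lem:boundarymap}.
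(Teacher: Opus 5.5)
Your proposal is correct and follows essentially the same route as the paper. Both define the character by composing $\alpha_p,\alpha_q$ with the standard injections and check that each vanishes on the boundary subgroup, since surface loops project into $\ker\chi$ and the meridian is vertical. Both then extend across the amalgam, obtain surjectivity from the images of orders $p$ and $q$ (your $\gcd(\ell/p,\ell/q)=1$ is the same fact as the paper's order-$\operatorname{lcm}(p,q)$ argument), count $[\Z/\ell:\operatorname{im}\beta_p]=\ell/p$ components, and identify each with $\Sigma_G\times P_p$ via Lemma~\ref{lem:productcover}. Your remark on the framing term $\lambda$ is a harmless explicit check that the paper leaves implicit.
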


\begin{proof}
Define
\[
\beta_p=\frac{\ell}{p}\alpha_p:\pi_1(M_{g,p})\to\Z/\ell,
\]
and similarly define $\beta_q$.  The restriction of either character to the boundary subgroup is zero: the surface subgroup maps to the kernel of the covering character, and the meridian lies in the punctured-sphere fiber group.  Hence the two characters extend across the amalgamated product to a homomorphism
\[
\beta:\pi_1(Z)\to\Z/\ell.
\]
Here multiplication by $\ell/p$ means the standard injection
$\mathbb Z/p\hookrightarrow\mathbb Z/\ell$ sending $1$ to $\ell/p$.
The two images have orders $p$ and $q$.  In a cyclic group the subgroup
generated by subgroups of these orders has order
$\operatorname{lcm}(p,q)=\ell$, so $\beta$ is onto.  Its kernel is a
normal subgroup of index $\ell$ and therefore determines a connected
regular cyclic cover with deck group $\mathbb Z/\ell$.  The number of
components over the first complement is
$[\mathbb Z/\ell:\operatorname{im}\beta_p]=\ell/p$, and similarly it is
$\ell/q$ over the second.  Lemma~\ref{lem:productcover} identifies every
component with $\Sigma_G\times P_p$, respectively
$\Sigma_G\times P_q$.  The stated decomposition follows.
\end{proof}

\begin{remark}
For $k=1$, one has $\ell=n$ in all three numerical cases.  In higher cases the two numbers may differ.  For example, the $k=3$ type $(p,q,n)=(3,3,6)$ has $\ell=3$.
\end{remark}

\section{A conditional twisted ruled analogue}

Let $E_g\to\Sigma_g$ be the nontrivial oriented $S^2$-bundle.  Choose a section $S$ with $S^2=1$ and let $F$ be the sphere fiber.  A square-zero degree-$p$ class has the form
\[
A_{g,p}=pS-\frac p2F,
\]
so $p$ must be even.  Adjunction gives
\[
g(A_{g,p})=p(g-1)+1.
\]
Thus the same numerical classification applies, with the additional requirement that both degrees be even.
The calculation alone does not produce an unbranched symplectic
representative.  The result below is therefore conditional on the
existence of connected representatives with the stated covering
property.  When such a representative is chosen, we denote the embedded
surface by $\mathcal A_{g,p}\subset E_g$ and reserve $A_{g,p}$ for its
homology class.

\begin{proposition}\label{prop:twisted}
Suppose that a numerical type in Theorem~\ref{thm:classification} is
represented on both nontrivial ruled surfaces by connected unbranched
symplectic multisections $\mathcal A_{g,p}$ and $\mathcal A_{h,q}$ in the
square-zero classes $A_{g,p}$ and $A_{h,q}$.  Then $p$ and $q$ are both
even.  For such a representative,
\[
H_1(E_g\setminus\operatorname{int}\nu \mathcal A_{g,p};\Z)
\cong \Z^{2g}\oplus \Z/(p/2).
\]
Whenever both sides exist as connected unbranched symplectic multisections, a suitable gluing has vanishing rational first homology and hence the rational cohomology ring of $\#_k(S^2\times S^2)$.
\end{proposition}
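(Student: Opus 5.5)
The proof has three parts: parity from integrality of the class, the complement's first homology from the exact sequence of the pair, and the rational realization by rerunning the argument of Theorem~\ref{thm:existence}.

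\textbf{Parity.} Write a degree-$p$ class as $pS+bF$. Its self-intersection is $(pS+bF)^2=p^2+2pb$, using $S^2=1$, $S\cdot F=1$ and $F^2=0$. Square zero forces $b=-p/2$. Integrality of the class $A_{g,p}$ therefore forces $p$ to be even. Applying this to both summands shows that $p$ and $q$ are even. Adjunction with $K_{E_g}=-2S+(2g-1)F$ recovers the stated genus, so the numerical classification of Theorem~\ref{thm:classification} carries over unchanged.

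\textbf{First homology of the complement.} I will not use the bundle coinvariant computation here. Instead I use the long exact sequence of the pair $(E_g,E_g\setminus\nu\mathcal A_{g,p})$. By excision and the Thom isomorphism,
\[
H_2(E_g,E_g\setminus\nu\mathcal A_{g,p})\cong H_0(\mathcal A_{g,p})=\Z,
\qquad
H_1(E_g,E_g\setminus\nu\mathcal A_{g,p})=0.
\]
This gives the exact sequence
\[
H_2(E_g)\xrightarrow{\ \cdot A_{g,p}\ }\Z\longrightarrow H_1(E_g\setminus\nu\mathcal A_{g,p})\longrightarrow H_1(E_g)\longrightarrow 0.
\]
The first map is intersection with $A_{g,p}$. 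Its image is generated by
\[
S\cdot A_{g,p}=p-\tfrac p2=\tfrac p2,\qquad F\cdot A_{g,p}=p,
\]
so the image is $(p/2)\Z$. The cokernel is $\Z/(p/2)$, generated by the meridian. Since $H_1(E_g)\cong\Z^{2g}$ is free, the sequence splits, giving $\Z^{2g}\oplus\Z/(p/2)$. Connectedness of $\mathcal A_{g,p}$ is used so that $H_0(\mathcal A_{g,p})=\Z$.

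\textbf{Rational realization.} Over $\Q$ the meridian dies, and the bundle projection identifies $H_1(E_g\setminus\nu\mathcal A_{g,p};\Q)$ with $H_1(\Sigma_g;\Q)$. The unbranched hypothesis makes the projection $\mathcal A_{g,p}\to\Sigma_g$ a degree-$p$ covering. Hence the surface part of the boundary map is the covering push-forward, just as in Lemma~\ref{lem:boundarymap}. Any framing choice changes only the meridional component, which vanishes rationally.

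Lemma~\ref{lem:covermap} then applies verbatim: the kernels $K_A$ and $K_B$ are symplectic subspaces of dimensions $2G-2g$ and $2G-2h$. Lemma~\ref{lem:gap} gives $G\ge g+h$. Lemma~\ref{lem:sympposition} supplies $T\in\Sp(2G,\R)$ making $\Theta_T$ surjective. From here I follow Theorem~\ref{thm:existence} step by step: Zariski density of $\Sp(2G,\Z)$, surjectivity of the mapping class group onto it, and Moser isotopy after matching the areas of the two multisections. This produces a symplectic sum with $H_1(Z;\Q)=0$. The Euler characteristic is still $2+2k$ by the fiber-sum formula, and the signature is still zero by Novikov additivity, since $\sigma(E_g)=0$. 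The unimodular-form argument of Theorem~\ref{thm:existence} then gives the rational cohomology ring.

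\textbf{Main obstacle.} The delicate step is justifying that the complement is a punctured-sphere bundle over $\Sigma_g$, whose projection induces a rational isomorphism on $H_1$ and whose composition with the boundary inclusion is the covering push-forward. I would argue this from the hypothesis that the projection restricted to $\mathcal A_{g,p}$ is an unbranched covering. Such a multisection is transverse to the fibers, so removing it fiberwise gives a locally trivial bundle with fiber $P_p$. The rational isomorphism on $H_1$ also follows independently from the exact sequence above, which reduces the bundle argument to identifying the boundary map.
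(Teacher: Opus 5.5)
Your proposal is correct and follows essentially the same route as the paper: parity from the square-zero equation, then the pair sequence, where the intersection image is $(p/2)\Z$ and the extension splits because $H_1(E_g;\Z)$ is free, then the symplectic-position argument of Theorem~\ref{thm:existence} applied to the rational covering push-forwards. Your extra checks fill in details the paper leaves implicit: the adjunction genus, the explicit values $S\cdot A_{g,p}=p/2$ and $F\cdot A_{g,p}=p$, the use of $\sigma(E_g)=0$ for the Euler and signature count, and the remark that the rational identification of $H_1$ of the complement with $H_1(\Sigma_g;\Q)$ already follows from the pair sequence.
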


\begin{proof}
The parity restriction follows from the square-zero equation.  Excision
and the Thom isomorphism give the exact segment
\[
H_2(E_g;\mathbb Z)\longrightarrow
H_2(E_g,E_g\setminus\nu \mathcal A_{g,p};\mathbb Z)\cong\mathbb Z
\longrightarrow H_1(E_g\setminus\nu \mathcal A_{g,p};\mathbb Z)
\longrightarrow H_1(E_g;\mathbb Z)\longrightarrow0.
\]
The first arrow is the intersection map.  Its image is generated by
\[
\gcd\left(p,\frac p2\right)=\frac p2,
\]
the divisibility of $A_{g,p}$.  Since
$H_1(E_g;\mathbb Z)\cong\mathbb Z^{2g}$ is free, the resulting extension
splits and gives the asserted complement homology.  The finite meridional
term disappears over $\Q$.  Because the multisections are unbranched,
their covering push-forwards remain surjective with symplectic kernels
over $\Q$.  The symplectic-position argument of
Theorem~\ref{thm:existence} therefore supplies a gluing for which the
rational Mayer--Vietoris map has full row rank.
\end{proof}

\begin{remark}
If $p$ is divisible by four, then $A_{g,p}\equiv0\pmod2$.  In the
mod-two pair sequence the kernel of
\[
H^2(E_g;\mathbb Z/2)\longrightarrow
H^2(E_g\setminus\nu \mathcal A_{g,p};\mathbb Z/2)
\]
is generated by $\operatorname{PD}(A_{g,p})$, hence is zero.  The nonzero
$w_2(E_g)$ therefore restricts nontrivially to the complement.  Any mixed
sum containing such a complement is nonspin.  This recovers the behavior
of the twisted degree-four side in the smallest $(2,4)$ construction.
\end{remark}

\section{The first two higher levels: \texorpdfstring{$3(S^2\times S^2)$ and $5(S^2\times S^2)$}{3(S2 x S2) and 5(S2 x S2)}}

The first two higher levels already differ from $k=1$.  The Mayer--Vietoris surface map is rectangular and
\[
G-g-h=\frac{k-1}{2}>0.
\]
These cases supply the integral, spin, and covering calculations below.

\begin{theorem}\label{thm:lowlevels}
At the $k=3$ level there are exactly five unordered numerical types, and at the $k=5$ level there are exactly seven.  Every one of these twelve types admits a minimal equatorially product-framed symplectic realization with the required rational cohomology.  It also admits a spin realization, with free intersection lattice $3H$ for $k=3$ and $5H$ for $k=5$.  None of the direct realizations is simply connected.  For every rational-cohomology realization the first homology is finite.  The persistent quotients $\mathbb Z/p$ and $\mathbb Z/q$ imply that its order is divisible by $\operatorname{lcm}(p,q)$.
\end{theorem}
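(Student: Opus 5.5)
The theorem assembles earlier results specialized to $t=2$ and $t=3$, so the proof will cite them in turn.

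\textbf{Counting.} For the counts, apply Theorem~\ref{thm:param}. Fix $t$, assume $a\le b=a+d$, and run over $0\le d\le t$. When $d<t$, list the $a$ with $a\mid t+d$ and $a+d\mid t-d$. When $d=t$, list the divisors $a$ of $2t$.

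For $t=2$:
\begin{itemize}
\item $d=0$: $a\mid 2$ gives $a=1,2$.
\item $d=1$: $a\mid 3$ and $a+1\mid 1$, which has no solution.
\item $d=2$: $a\mid 4$ gives $a=1,2,4$.
\end{itemize}
This yields five types.

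For $t=3$:
\begin{itemize}
\item $d=0$: $a\mid 3$ gives $a=1,3$.
\item $d=1$: $a\mid 4$ and $a+1\mid 2$ give $a=1$.
\item $d=2$: $a\mid 5$ and $a+2\mid 1$, which has no solution.
\item $d=3$: $a\mid 6$ gives $a=1,2,3,6$.
\end{itemize}
This yields seven types. Both counts match the table in Section~3 and Corollary~\ref{cor:counts}.

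\textbf{Realizations.} For each of the twelve types:
\begin{itemize}
\item Theorem~\ref{thm:existence} gives an equatorially product-framed symplectic gluing with $H_1(Z;\Q)=0$, and hence the rational cohomology ring of $\#_k(S^2\times S^2)$.
\item Theorem~\ref{thm:min} gives minimality.
\item Theorem~\ref{thm:spin} gives a spin realization with $Q_{Z,\mathrm{free}}\cong kH$, which is $3H$ or $5H$ here.
\item Corollary~\ref{cor:pi1} (or Corollary~\ref{cor:quotients}) shows that no direct realization is simply connected.
\end{itemize}

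\textbf{First homology.} Take any rational-cohomology realization. Then $b_1=0$, so the finitely generated group $H_1(Z;\Z)$ is finite. By Corollary~\ref{cor:quotients}, $H_1(Z;\Z)$ surjects onto both $\Z/p$ and $\Z/q$, so both $p$ and $q$ divide its order. Hence $\operatorname{lcm}(p,q)$ divides the order.

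For a stronger, intrinsic argument, combine the two characters exactly as in Proposition~\ref{prop:cover}. They vanish on the edge group, so they induce a surjection $H_1(Z;\Z)\to\Z/\ell$. This works for any direct gluing, including ones with a framing term $\lambda$. The reason is that the characters factor through the base projections, and the meridian and surface boundary classes map into the kernel.

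\textbf{Main obstacle.} The only real risk is the enumeration, where a case could be omitted. To guard against this, I will cross-check each type by recomputing $p=n/a$ and $q=n/b$ against the table. Everything else is citation.
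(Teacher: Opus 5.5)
Your proposal is correct and follows the paper's proof. Your explicit enumeration of the $t=2$ and $t=3$ cases of Theorem~\ref{thm:param} is accurate, the realization, minimality and spin claims are cited from Theorems~\ref{thm:existence}, \ref{thm:min} and \ref{thm:spin} just as the paper does, and the divisibility comes from the two surjections of Corollary~\ref{cor:quotients}. Your extra route through the character of Proposition~\ref{prop:cover} is also valid but not needed, since a finite abelian group that surjects onto $\mathbb Z/p$ and $\mathbb Z/q$ already surjects onto $\mathbb Z/\operatorname{lcm}(p,q)$.
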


\begin{proof}
The lists follow from Theorem~\ref{thm:param}.  Existence and minimality are Theorems~\ref{thm:existence} and~\ref{thm:min}; the spin statement is Theorem~\ref{thm:spin}.  Boundary incompressibility gives the nontrivial fundamental group.  By Corollary~\ref{cor:quotients}, a finite first homology group surjects onto both $\mathbb Z/p$ and $\mathbb Z/q$.  Its order is therefore divisible by both $p$ and $q$, and hence by $\operatorname{lcm}(p,q)$.
\end{proof}

\subsection{The five types for \texorpdfstring{$3(S^2\times S^2)$}{3(S2 x S2)}}
Here $t=2$.  There are exactly five unordered types:
\[
(4,4;2,2;5),\ (3,3;3,3;7),\ (6,2;2,4;7),\ (4,2;3,5;9),\ (3,2;5,7;13),
\]
where the notation is $(p,q;g,h;G)$ and the corresponding values of $n$ are $4,6,6,8,12$.

For every type, $e=8$ and $\sigma=0$, while $G=g+h+1$.  Hence a maximal-rank rational Mayer--Vietoris map has two-dimensional kernel.  Theorem~\ref{thm:existence} gives an equatorially product-framed gluing with
\[
b_1=0,\quad b_2=6,\quad b_2^+=b_2^-=3,
\]
and therefore the rational cohomology of $\#_3(S^2\times S^2)$.  The spin gluing of Theorem~\ref{thm:spin} has free lattice $3H$.

The symmetric $(4,4)$ type uses two copies of $\Sigma_2\times S^2$ and a genus-five degree-four multisection.  Its two complements have $H_1\cong\Z^4\oplus\Z/4$.  The symmetric $(3,3)$ type uses two copies of $\Sigma_3\times S^2$ and a genus-seven degree-three multisection.  The mixed $(6,2)$ type has the same gluing genus seven, but uses $\Sigma_2\times S^2$ and $\Sigma_4\times S^2$.  Thus the gluing genus alone does not determine the complement topology.

The $(4,2)$ type glues $\Sigma_3\times S^2$ to $\Sigma_5\times S^2$ along $\Sigma_9$, while the $(3,2)$ type glues $\Sigma_5\times S^2$ to $\Sigma_7\times S^2$ along $\Sigma_{13}$.  In every case the persistent quotients $\Z/p$ and $\Z/q$ show that the direct sum does not have the integral homology of $\#_3(S^2\times S^2)$.

The associated cyclic-cover degrees $\ell=\operatorname{lcm}(p,q)$ are respectively
\[
4,\ 3,\ 6,\ 4,\ 6.
\]
In particular, for the $(3,3)$ type one has $n=6$ but $\ell=3$.  This is the first level where the two natural covering integers separate.

\subsection{Three model constructions at the \texorpdfstring{$k=3$}{k=3} level}
Three of the five types will be examined: the symmetric even-degree case $(4,4)$, the symmetric odd-degree case $(3,3)$, and the asymmetric case $(6,2)$.  The last two have the same gluing genus but different complement topology.

For later use, recall the complement presentation.  If $x_1,\ldots,x_p$ are the peripheral loops of the $p$-punctured sphere and the covering character sends $a_1$ to $1\in\mathbb Z/p$ and the remaining standard generators to zero, then
\begin{align*}
\pi_1(M_{g,p})=\langle a_i,b_i,x_1,\ldots,x_p\mid {}&
\prod_{i=1}^g[a_i,b_i]=1,\quad x_1\cdots x_p=1,\\
&a_1x_ja_1^{-1}=x_{j+1},\quad [b_1,x_j]=1,\\
&[a_i,x_j]=[b_i,x_j]=1\quad (i\ge2)\rangle ,
\end{align*}
where the indices on the $x_j$ are read modulo $p$.  The north-pole section
fixes the fiber basepoint, so this is the semidirect-product presentation
$F_{p-1}\rtimes\pi_1(\Sigma_g)$ with literal cyclic rotation
$x_j\mapsto x_{j+1}$.  Its abelianization is
\[
H_1(M_{g,p};\mathbb Z)\cong\mathbb Z^{2g}\oplus\mathbb Z/p.
\]
The boundary group $\pi_1(\Sigma_G)\times\mathbb Z$ injects into this group.  Consequently the three constructions below are amalgamated products along the incompressible boundary subgroup.

The integral surface maps can be written uniformly.

\begin{lemma}\label{lem:integralbasis}
For the cyclic cover $\Sigma_G\to\Sigma_g$ determined by a primitive
character, there are symplectic bases
\[
(e_0,f_0),\qquad (e_{i,r},f_{i,r}),\quad 1\leq i\leq g-1,\quad 0\leq r<p,
\]
upstairs and $(a_1,b_1,\ldots,a_g,b_g)$ downstairs such that
\begin{equation}\label{eq:Qexplicit}
Q_{p,g}(e_0)=pa_1,\qquad Q_{p,g}(f_0)=b_1,
\end{equation}
while
\[
Q_{p,g}(e_{i,r})=a_{i+1},\qquad
Q_{p,g}(f_{i,r})=b_{i+1}.
\]
\end{lemma}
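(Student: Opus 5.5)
Build the cover geometrically by cutting $\Sigma_g$ along a curve, so that explicit lifts of the standard curves give the basis upstairs. Write
\[
\Sigma_g=T\,\#\,\Sigma_{g-1},
\]
where $T$ is the torus carrying $a_1,b_1$ and $\Sigma_{g-1}$ carries $a_i,b_i$ for $i\geq2$.

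By the choice of basis (as in Lemma~\ref{lem:integralimage}), the character $\chi$ sends $a_1\mapsto1$ and kills all other generators.

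\textbf{Step 1 (construct the cover).} Cut $\Sigma_g$ along $b_1$. Since $\chi(b_1)=0$ and $b_1$ is dual to $a_1$, crossing $b_1$ is exactly what increments the character. Cutting along the nonseparating curve $b_1$ gives a genus-$(g-1)$ surface $W$ with two boundary circles $b_1^\pm$. Take $p$ copies $W_0,\dots,W_{p-1}$ and glue $b_1^+\subset W_r$ to $b_1^-\subset W_{r+1}$, with indices mod $p$. This is the cyclic cover with deck group $\mathbb Z/p$, and it is connected. Its genus is $p(g-1)+1=G$, in agreement with Proposition~\ref{prop:multi}.

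\textbf{Step 2 (basis upstairs).} In each $W_r$ lie lifts $e_{i,r},f_{i,r}$ of $a_{i+1},b_{i+1}$; these are closed because $\chi$ vanishes on them. Lifts in distinct sheets are disjoint, and within a sheet they intersect as downstairs, giving $p(g-1)$ standard hyperbolic pairs. Let $e_0$ be the closed lift of $a_1^p$, which crosses each sheet once. Let $f_0$ be a single lift of $b_1$, namely the glued circle between $W_0$ and $W_1$. Then $e_0\cdot f_0=1$, since $e_0$ meets that gluing circle once. Both $e_0$ and $f_0$ can be isotoped to run inside a collar or near the chain of gluing circles, disjoint from the handles carrying $e_{i,r},f_{i,r}$. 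The handles of $\Sigma_{g-1}$ can be pushed off a neighborhood of $a_1\cup b_1$ downstairs, so this disjointness is achievable.

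This gives $2+2p(g-1)=2G$ classes with the standard intersection matrix. A collection of $2G$ classes with unimodular standard pairing is automatically a $\mathbb Z$-basis of $H_1(\Sigma_G;\mathbb Z)$: the Gram matrix has determinant $\pm1$, so the classes span a unimodular sublattice of full rank, which must be everything. Hence they form a symplectic basis.

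\textbf{Step 3 (push-forwards).} The images are read off directly: $e_0\mapsto pa_1$, $f_0\mapsto b_1$, $e_{i,r}\mapsto a_{i+1}$, and $f_{i,r}\mapsto b_{i+1}$, which is \eqref{eq:Qexplicit}. As a consistency check, these images generate $\langle pa_1,b_1,a_2,\dots,b_g\rangle$, matching Lemma~\ref{lem:integralimage}.

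\textbf{Main obstacle.} The delicate point is arranging genuine disjointness of $e_0$ and $f_0$ from all the $e_{i,r},f_{i,r}$. The other lifts of $b_1$, on the remaining gluing circles, are homologous to $f_0$ up to the handle classes, but we never use them, so they cause no difficulty. The plan is to realize the disjointness concretely by putting $a_1,b_1$ in a separating torus-with-one-boundary $T^\circ\subset\Sigma_g$. The preimage of $T^\circ$ is a connected $p$-fold cover of $T^\circ$, namely a torus with $p$ boundary circles. It contains $e_0$ and $f_0$, while all the other lifts sit in the $p$ disjoint copies of $\Sigma_{g-1}^\circ$.
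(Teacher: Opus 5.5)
Your proposal is correct and is essentially the paper's own proof. Both split $\Sigma_g$ into a once-punctured torus and a genus-$(g-1)$ piece, take $e_0,f_0$ in the connected preimage torus with $p$ boundary circles and the $e_{i,r},f_{i,r}$ in the $p$ disjoint copies of the genus-$(g-1)$ piece, and conclude by the unimodularity argument. Your cut-along-$b_1$ model is just an equivalent description of the same cover, and the disjointness you flag as the main obstacle is automatic: preimages of disjoint curves downstairs are disjoint upstairs.
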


\begin{proof}
Choose the downstairs basis so that the covering character takes $a_1$
to $1\in\mathbb Z/p$ and vanishes on all other basis elements.  Decompose
$\Sigma_g$ as the union of a once-punctured torus $T$, carrying
$a_1,b_1$, and a genus-$(g-1)$ surface $R$ with one boundary component,
carrying the remaining symplectic pairs.  The induced cover of $T$ is a
connected genus-one surface $\widetilde T$ with $p$ boundary components:
it has Euler characteristic $-p$, its boundary monodromy is trivial, and
the monodromy along $a_1$ is a $p$-cycle.  The inverse image of $R$ is the
disjoint union of $p$ copies $R_0,\ldots,R_{p-1}$.  The closed covering
surface is obtained by gluing the boundary of $R_r$ to the corresponding
boundary component of $\widetilde T$.

Choose a symplectic pair $e_0,f_0$ in the genus-one part of
$\widetilde T$, disjoint from its boundary, so that $e_0$ traverses the
$a_1$ direction through all $p$ sheets and $f_0$ is a lift of $b_1$.
Then
\[
\begin{aligned}
e_0\cdot f_0&=1,\\
Q_{p,g}(e_0)&=pa_1,\qquad Q_{p,g}(f_0)=b_1.
\end{aligned}
\]
In each $R_r$, lift the standard symplectic pairs on $R$ and denote them
by $(e_{i,r},f_{i,r})$, $1\leq i\leq g-1$.  Pairs in different $R_r$'s
are disjoint, all of them are disjoint from $e_0,f_0$, and their only
nonzero intersections are
$e_{i,r}\cdot f_{i,r}=1$.  Their push-forwards are
$a_{i+1}$ and $b_{i+1}$, respectively.

Thus the displayed curves have the standard symplectic intersection
matrix and give
\[
1+p(g-1)=G
\]
symplectic pairs, hence $2G$ homology classes.  Since a genus-$G$ surface
has $H_1$ of rank $2G$, the unimodular intersection matrix shows that
these classes form an integral basis.  This proves all the asserted
push-forward formulas.
\end{proof}

Thus, after grouping columns by symplectic pairs, $Q_{p,g}$ has one
distinguished block $\operatorname{diag}(p,1)$ and, for each $i\geq1$,
$p$ identical copies mapping onto the $(i+1)$st symplectic plane.
In particular the three model cases below are governed by
\[
Q_{4,2}:\mathbb Z^{10}\to\mathbb Z^4,\qquad
Q_{3,3}:\mathbb Z^{14}\to\mathbb Z^6,
\]
and
\[
Q_{6,2}:\mathbb Z^{14}\to\mathbb Z^4,\qquad
Q_{2,4}:\mathbb Z^{14}\to\mathbb Z^8.
\]
In this basis the index defect is explicit: the cokernel of $Q_{p,g}$ is $\mathbb Z/p$.

\subsubsection{The symmetric $(4,4)$ construction}
Let
\[
F_{2,4}\subset \Sigma_2\times S^2
\]
be the connected cyclic degree-four multisection.  It has genus five, square zero, and class $4[\Sigma_2\times\{\mathrm{pt}\}]$.  Put
\[
M_{4}=M_{2,4}=(\Sigma_2\times S^2)\setminus\operatorname{int}\nu F_{2,4}.
\]
The first higher symmetric sum is
\[
Z^{4,4}_{2,2}(\varphi)=M_4\cup_\varphi M_4.
\]
It satisfies
\[
e(Z^{4,4}_{2,2})=8,\qquad \sigma(Z^{4,4}_{2,2})=0.
\]
Moreover
\[
H_1(M_4;\mathbb Z)\cong\mathbb Z^4\oplus\mathbb Z/4,
\qquad
\pi_1(Z^{4,4}_{2,2})\cong
\pi_1(M_4)*_{\pi_1(\Sigma_5)\times\mathbb Z}\pi_1(M_4).
\]
For an equatorially product-framed gluing induced by $S\in\operatorname{Sp}(10,\mathbb Z)$,
\[
H_1(Z^{4,4}_{2,2};\mathbb Z)
\cong
\operatorname{coker}
\begin{pmatrix}Q_{4,2}\\-Q_{4,2}S\end{pmatrix}
\oplus\mathbb Z/4.
\]
Here the surface matrix has size $8\times10$.  Thus a rational-cohomology realization is obtained exactly when it has rank eight over $\mathbb Q$.  In that case $|H_1|$ is divisible by $4$.  After replacing the equatorial framings by suitable spin-compatible symplectic framings, Theorem~\ref{thm:spin} gives a possibly different product gluing which is spin and has free intersection lattice $3H$.  Relative to the original equatorial framings, that gluing may have a nonzero framing term $\lambda$.

The cyclic cover associated to the fixed choices of Proposition~\ref{prop:cover} has degree four.  Its restriction to either complement is the product
\[
\Sigma_5\times P_4.
\]
Thus the lifted decomposition consists of two product pieces glued along copies of $\Sigma_5\times S^1$.  The $(4,4)$ case provides an even-degree model for the higher cyclic cover.

\subsubsection{The symmetric $(3,3)$ construction}
Now take the degree-three multisection
\[
F_{3,3}\subset\Sigma_3\times S^2.
\]
Its genus is seven.  With
\[
M_3=M_{3,3},
\]
we form
\[
Z^{3,3}_{3,3}(\varphi)=M_3\cup_\varphi M_3.
\]
The complement has
\[
H_1(M_3;\mathbb Z)\cong\mathbb Z^6\oplus\mathbb Z/3,
\]
and
\[
\pi_1(Z^{3,3}_{3,3})\cong
\pi_1(M_3)*_{\pi_1(\Sigma_7)\times\mathbb Z}\pi_1(M_3).
\]
For an equatorially product-framed gluing induced by $S\in\operatorname{Sp}(14,\mathbb Z)$,
\[
H_1(Z^{3,3}_{3,3};\mathbb Z)
\cong
\operatorname{coker}
\begin{pmatrix}Q_{3,3}\\-Q_{3,3}S\end{pmatrix}
\oplus\mathbb Z/3.
\]
The surface matrix is $12\times14$.  Full row rank gives the rational cohomology of $\#_3(S^2\times S^2)$, and every such equatorially product-framed realization has first homology of order divisible by three.

Since the covering degree is odd, pullback preserves the Arf invariant of a spin structure on the base.  One may begin with spin structures of the same Arf invariant on the two copies of $\Sigma_3$, pull them back to $\Sigma_7$, and then choose spin-compatible symplectic framings and a gluing in the corresponding level-two congruence coset while retaining full rational rank.  This may be a different realization from the equatorially product-framed one in the preceding paragraph.  The resulting manifold is spin with free lattice $3H$.

Here
\[
n=6,\qquad \ell=\operatorname{lcm}(3,3)=3.
\]
Thus the associated cyclic cover has degree three, not six.  Over either side it is a single product piece
\[
\Sigma_7\times P_3.
\]
Here the integer controlling the genus differs from the degree of the associated cyclic cover.  Identifying the closed cover after the boundary gluing remains open.

\subsubsection{The asymmetric $(6,2)$ construction}
The third model uses
\[
F_{2,6}\subset\Sigma_2\times S^2,
\qquad
F_{4,2}\subset\Sigma_4\times S^2.
\]
Both surfaces have genus seven, since
\[
6(2-1)=2(4-1)=6.
\]
Set
\[
M_6=M_{2,6},\qquad M_2=M_{4,2}.
\]
Then
\[
Z^{6,2}_{2,4}(\varphi)=M_6\cup_\varphi M_2
\]
has $e=8$ and signature zero, while
\[
H_1(M_6;\mathbb Z)\cong\mathbb Z^4\oplus\mathbb Z/6,
\qquad
H_1(M_2;\mathbb Z)\cong\mathbb Z^8\oplus\mathbb Z/2.
\]
The fundamental group is
\[
\pi_1(Z^{6,2}_{2,4})\cong
\pi_1(M_6)*_{\pi_1(\Sigma_7)\times\mathbb Z}\pi_1(M_2).
\]

For an equatorially product-framed gluing,
\[
H_1(Z^{6,2}_{2,4};\mathbb Z)
\cong
\operatorname{coker}
\begin{pmatrix}Q_{6,2}\\-Q_{2,4}S\end{pmatrix}
\oplus\mathbb Z/2,
\]
where the surface matrix again has size $12\times14$.  Hence the $(6,2)$ and $(3,3)$ constructions have identical rational matrix dimensions but different integral covering defects.  In the present case the order of the first homology group of every rational-cohomology equatorially product-framed realization is divisible by six.

The cyclic cover associated to the fixed choices of Proposition~\ref{prop:cover} has degree six.  Its inverse image over $M_6$ is one copy of
\[
\Sigma_7\times P_6,
\]
whereas its inverse image over $M_2$ consists of three copies of
\[
\Sigma_7\times P_2.
\]
The lifted decomposition is asymmetric in the number of components, unlike the symmetric $(3,3)$ construction with the same genus-seven boundary.

\begin{proposition}\label{prop:k3comparison}
The three $k=3$ model constructions above have the same Euler characteristic, signature, and rational Betti numbers and admit spin realizations with free lattice $3H$, but their mixed-sum presentations have different complement homology and associated cyclic covers.  In particular, the common gluing genus does not determine either the integral complement data or the cyclic-cover decomposition.  No claim is made here that arbitrary closed realizations of the three types are pairwise nondiffeomorphic.
\end{proposition}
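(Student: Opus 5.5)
The statement is a comparison assembled from results already proved, so the plan is to verify each clause by invoking the appropriate earlier statement and then compare explicit invariants of the three presentations.

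\textbf{Common invariants.} For the types $(4,4;2,2;5)$, $(3,3;3,3;7)$ and $(6,2;2,4;7)$, Theorem~\ref{thm:classification} with $t=2$ gives $e=8$ and $\sigma=0$. Theorem~\ref{thm:existence} then gives $b_1=0$ and $b_2=6$ with $b_2^\pm=3$. Theorem~\ref{thm:spin} supplies a spin realization with $Q_{Z,\mathrm{free}}\cong 3H$. This settles the first sentence.

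\textbf{Complement homology differs.} By \eqref{eq:H1comp}, the unordered pairs of complement groups are as follows.
\[
\{\mathbb Z^4\oplus\mathbb Z/4,\ \mathbb Z^4\oplus\mathbb Z/4\},\qquad
\{\mathbb Z^6\oplus\mathbb Z/3,\ \mathbb Z^6\oplus\mathbb Z/3\},\qquad
\{\mathbb Z^4\oplus\mathbb Z/6,\ \mathbb Z^8\oplus\mathbb Z/2\}.
\]
These multisets of abelian groups are pairwise distinct: free ranks or torsion orders already separate them.

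\textbf{Cyclic covers differ.} Proposition~\ref{prop:cover} gives the lifted decompositions. For $(4,4)$ it is degree $4$ with one $\Sigma_5\times P_4$ on each side. For $(3,3)$ it is degree $3$ with one $\Sigma_7\times P_3$ on each side. For $(6,2)$ it is degree $6$, with one $\Sigma_7\times P_6$ over $M_6$ and three copies of $\Sigma_7\times P_2$ over $M_2$. The deck-group orders $4,3,6$ distinguish them.

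\textbf{Gluing genus is not enough.} The $(3,3)$ and $(6,2)$ presentations share $G=7$ but differ in both the complement data and the cover decomposition. This gives the ``in particular'' clause. The final sentence is a disclaimer and needs no proof.

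\textbf{Main obstacle.} The only real care needed is to state the comparison as one between presentations, not between manifolds. The listed invariants depend on the chosen decomposition and the fixed characters, not on the closed manifold. For the cover decomposition in particular, I would stress that the identification with $\Sigma_G\times P_p$ rests on Lemma~\ref{lem:productcover} for the explicit root multisection.
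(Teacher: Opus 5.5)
Your proposal is correct and follows essentially the same route as the paper: the common numerical, rational, and spin data are quoted from Theorems~\ref{thm:classification}, \ref{thm:existence}, and~\ref{thm:spin}, and the distinction is read off from the complement groups given by \eqref{eq:H1comp} and the lifted decompositions of Proposition~\ref{prop:cover}, with the shared genus $G=7$ of the $(3,3)$ and $(6,2)$ types giving the ``in particular'' clause. Your remark that these are invariants of the chosen presentations rather than of the closed manifolds matches the paper's own disclaimer.
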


\begin{proof}
The numerical invariants and spin statement were established above.  The complement homology groups are respectively
\[
\mathbb Z^4\oplus\mathbb Z/4,
\qquad
\mathbb Z^6\oplus\mathbb Z/3,
\qquad
(\mathbb Z^4\oplus\mathbb Z/6,\ \mathbb Z^8\oplus\mathbb Z/2),
\]
and the cyclic-cover degrees are $4,3,6$.  For the two genus-seven cases, the $(3,3)$ cover has one $\Sigma_7\times P_3$ piece over each side, whereas the $(6,2)$ cover has one $\Sigma_7\times P_6$ piece on one side and three $\Sigma_7\times P_2$ pieces on the other.  This proves the assertion.
\end{proof}

\begin{corollary}\label{cor:k3lattice}
For each of the three model constructions there is a minimal spin symplectic gluing $Z$ with
\[
H^*(Z;\mathbb Q)\cong H^*(\#_3(S^2\times S^2);\mathbb Q),\qquad
Q_{Z,\mathrm{free}}\cong 3H.
\]
Its fundamental group contains $\pi_1(\Sigma_G)\times\mathbb Z$.  Consequently none of these direct sums is homotopy equivalent to a closed real-hyperbolic four-manifold or to a compact quotient of the bidisk by a torsion-free cocompact irreducible lattice.
\end{corollary}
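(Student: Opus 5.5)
The corollary combines results already proved, so the plan is to assemble them in order for each of the three model types $(4,4;2,2;5)$, $(3,3;3,3;7)$ and $(6,2;2,4;7)$. First I invoke Theorem~\ref{thm:spin} for each type. It supplies spin-compatible symplectic normal framings and a product gluing relative to them, chosen in the coset $T_0\Gamma(2)$ intersected with the Zariski-open full-rank locus. This gives a gluing with $H_1(Z;\mathbb Q)=0$, $w_2(Z)=0$ and $Q_{Z,\mathrm{free}}\cong 3H$. The gluing is a genuine symplectic sum along the multisections, so Theorem~\ref{thm:min} gives minimality. Since $b_1=0$ and the free form is $3H$, the argument at the end of Theorem~\ref{thm:existence} identifies the rational cohomology ring with that of $\#_3(S^2\times S^2)$. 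The point that the spin gluing is not equatorially product-framed does not matter: minimality and the ring computation depend only on the gluing being a symplectic sum with full rational rank.

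Next, Corollary~\ref{cor:pi1} applies to every direct gluing, including the spin one with its possibly nonzero framing term. Theorem~\ref{thm:incomp} is a statement about the complement and does not depend on the chosen framing, so $\pi_1(\Sigma_G)\times\mathbb Z\hookrightarrow\pi_1(Z)$ with $G=5$ or $7$.

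For the homotopy statements I will argue through fundamental groups. Suppose $Z$ were homotopy equivalent to a closed real-hyperbolic four-manifold $X=\mathbb H^4/\Gamma$. Then $\pi_1(Z)\cong\Gamma$, a torsion-free cocompact lattice in $\operatorname{PO}(4,1)$. Theorem~\ref{thm:lattice}(2) excludes this; directly, $\Gamma$ would be word-hyperbolic yet contain $\mathbb Z^2$. Suppose instead $Z$ were homotopy equivalent to $(\mathbb H^2\times\mathbb H^2)/\Gamma$ with $\Gamma$ a torsion-free cocompact irreducible lattice. The quotient is aspherical, so $\pi_1(Z)\cong\Gamma$. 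Theorem~\ref{thm:lattice}(4) excludes this; its proof uses only the abstract isomorphism type of the group together with the injectivity of the coordinate projections established there. Here I need to check that $\Gamma$ sits in $\operatorname{PSL}_2(\mathbb R)\times\operatorname{PSL}_2(\mathbb R)$ rather than in the full isometry group. Any factor-swapping elements lie in an index-two extension, and a torsion-free lattice acting on the bidisk by holomorphic isometries preserving the orientation lies in the identity component up to that swap. If needed, I would restrict to the index-at-most-two subgroup of $\pi_1(Z)$. This subgroup still contains a finite-index subgroup of $\pi_1(\Sigma_G)\times\mathbb Z$, namely a surface group times $\mathbb Z$, and the centralizer argument of Lemma~\ref{lem:centralizer} applies verbatim to it.

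I expect the main obstacle to be this bookkeeping about the isometry group of the bidisk and the passage to finite index. The remaining steps are direct citations.
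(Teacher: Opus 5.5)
Your proposal is correct and is essentially the paper's proof, which chooses the gluing by Theorem~\ref{thm:spin} and then cites Theorem~\ref{thm:min}, Corollary~\ref{cor:pi1} and Theorem~\ref{thm:lattice}. Your extra bookkeeping is a harmless refinement the paper leaves implicit: the rational ring from $b_1=0$, the framing-independence of incompressibility, and the passage to a finite-index subgroup of $\Gamma$ lying in $\operatorname{PSL}_2(\mathbb R)\times\operatorname{PSL}_2(\mathbb R)$. That last step works for any finite index, not only index two, since a finite-index subgroup of $\pi_1(\Sigma_G)\times\mathbb Z$ still contains a nonabelian surface group times $\langle z^m\rangle$ for some $m\geq1$.
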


\begin{proof}
Choose the gluing by Theorem~\ref{thm:spin}.  Minimality follows from Theorem~\ref{thm:min}, and the subgroup statement follows from Corollary~\ref{cor:pi1}.  The two lattice exclusions follow from Theorem~\ref{thm:lattice}.
\end{proof}

\subsection{The remaining two \texorpdfstring{$k=3$}{k=3} types}
The two remaining types are
\[
(4,2;3,5;9),\qquad (3,2;5,7;13).
\]
Their complement homology is
\[
\mathbb Z^6\oplus\mathbb Z/4\quad\hbox{and}\quad
\mathbb Z^{10}\oplus\mathbb Z/2
\]
in the first case, and
\[
\mathbb Z^{10}\oplus\mathbb Z/3\quad\hbox{and}\quad
\mathbb Z^{14}\oplus\mathbb Z/2
\]
in the second.  Their associated cyclic-cover degrees are four and six.  The rational Mayer--Vietoris maps have sizes $16\times18$ and $24\times26$, respectively.  The two-dimensional excess persists throughout the $k=3$ list, while the integral matrices grow with the gluing genus.

\subsection{The seven types for \texorpdfstring{$5(S^2\times S^2)$}{5(S2 x S2)}}
Here $t=3$. The complete list is
\[
\begin{array}{c|c|c|c}
(p,q)&n&(g,h)&G\\ \hline
(5,5)&5&(2,2)&6\\ (6,3)&6&(2,3)&7\\ (8,2)&8&(2,5)&9\\
(3,3)&9&(4,4)&10\\ (5,2)&10&(3,6)&11\\ (4,2)&12&(4,7)&13\\ (3,2)&18&(7,10)&19.
\end{array}
\]
For each row, $e=12$, $\sigma=0$, and $G=g+h+2$.  A maximal-rank rational surface map has four-dimensional kernel.  Thus suitable gluings have
\[
b_1=0,\quad b_2=10,\quad b_2^+=b_2^-=5,
\]
and the rational cohomology of $\#_5(S^2\times S^2)$.  Spin gluings have free lattice $5H$.

There are two symmetric types.  The $(5,5)$ case uses two copies of $\Sigma_2\times S^2$ and a genus-six multisection; the $(3,3)$ case uses two copies of $\Sigma_4\times S^2$ and a genus-ten multisection.  The other five types are asymmetric.  Four have a degree-two side, and the gluing genus reaches nineteen for $(3,2)$.

The integral complement data already distinguish these constructions.  For example,
\[
H_1(M_{2,5};\Z)\cong\Z^4\oplus\Z/5,
\]
whereas the $(8,2)$ type uses
\[
H_1(M_{2,8};\Z)\cong\Z^4\oplus\Z/8,\qquad H_1(M_{5,2};\Z)\cong\Z^{10}\oplus\Z/2.
\]
The cyclic-cover degrees are
\[
5,\ 6,\ 8,\ 3,\ 10,\ 4,\ 6.
\]
Thus neither $G$ nor the cyclic-cover degree alone determines the higher type.

\subsection{What changes from \texorpdfstring{$k=1$}{k=1}}
For $k=3$ and $k=5$ the surface map is rectangular with excess dimensions two and four.  The rational realization argument remains uniform, but the integral problem is no longer controlled by one determinant: it involves the determinantal divisors of all maximal minors.  The spin theorem and boundary incompressibility apply uniformly.  Thus all twelve types admit spin rational-cohomology realizations with free form $kH$, and none of the direct sums is simply connected.

\section{Integral and complex-geometric consequences}

We next consider integral first homology and possible complex structures.

\subsection{Integral first homology}
Let $Z$ be an equatorially product-framed higher mixed sum with $H_1(Z;\Q)=0$.  The integral Mayer--Vietoris presentation of Section~5 shows that $H_1(Z;\Z)$ is finite.  The two covering defects survive as quotients $\Z/p$ and $\Z/q$.  Therefore
\[
\operatorname{lcm}(p,q)\mid |H_1(Z;\Z)|.
\]
For the five $k=3$ types the resulting lower divisors are
\[
4,\ 3,\ 6,\ 4,\ 6,
\]
and in the seven $k=5$ types they are
\[
5,\ 6,\ 8,\ 3,\ 10,\ 4,\ 6.
\]
Stronger product divisibility may hold for particular integral gluings, but it requires information about the maximal minors of the rectangular surface matrix and is not asserted here without that calculation.

For a rectangular integral presentation matrix of full row rank, the order of its cokernel is the greatest common divisor of its maximal minors.  The higher integral problem is therefore a determinantal-divisor problem, replacing the single determinant that appears when $k=1$.  A one-parameter family of higher gluings has unbounded first homology if the greatest common divisor of its maximal minors is unbounded.

\subsection{Complex structures}

\begin{proposition}\label{prop:complexhigher}
Let $Z$ be one of the rational-cohomology higher mixed sums constructed above, with $k>1$, and suppose that $Z$ admits an orientation-compatible complex structure.  Then $Z$ is a minimal surface of general type and
\[
 q(Z)=0,\qquad p_g(Z)=\frac{k-1}{2},\qquad
 c_2(Z)=2(k+1),\qquad K_Z^2=4(k+1),
\]
so that
\[
 K_Z^2=2c_2(Z),\qquad \chi(\mathcal O_Z)=\frac{k+1}{2}.
\]
If $Z$ is a direct mixed sum, its fundamental group is not a torsion-free cocompact irreducible lattice in $\operatorname{PSL}_2(\R)\times\operatorname{PSL}_2(\R)$.
\end{proposition}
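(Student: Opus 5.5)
The plan is to combine the topological invariants already computed with the Enriques--Kodaira classification and Noether's formula. Suppose $Z$ carries a complex structure compatible with its orientation. By Theorem~\ref{thm:existence}, $b_1(Z)=0$, $b_2^+(Z)=k$, $e(Z)=2+2k$ and $\sigma(Z)=0$. For a compact complex surface, $b_1$ even gives K\"ahler type, and then $q=b_1/2=0$. Hodge theory gives $b_2^+=2p_g+1$, so $p_g=(k-1)/2$. Next, $c_2=e=2(k+1)$, and Noether's formula $\chi(\mathcal O_Z)=(K^2+c_2)/12$ together with $\chi(\mathcal O)=1-q+p_g=(k+1)/2$ gives $K_Z^2=6(k+1)-2(k+1)=4(k+1)=2c_2$. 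Alternatively, $K^2=2e+3\sigma$ gives the same value directly.

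To place $Z$ in the classification, I will use $k>1$, which gives $p_g\ge1$, so the Kodaira dimension is nonnegative and $Z$ is neither rational nor ruled. First I need minimality of the complex surface. A $(-1)$-curve would give a smooth embedded sphere of square $-1$, so the intersection form would represent an odd value. I plan to argue as follows. Theorem~\ref{thm:min} together with the symplectic minimality of the K\"ahler form would handle this only if the complex structure's K\"ahler form were deformation equivalent to ours, which is not known. So I would rather argue numerically. If the minimal model $Z_{\min}$ is obtained by $m\ge1$ blow-downs, then $K_{\min}^2=4(k+1)+m$ and $c_2(Z_{\min})=2(k+1)-m$. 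If $Z_{\min}$ had nonnegative Kodaira dimension but were not of general type, then $K_{\min}^2=0$, which is impossible. So $Z_{\min}$ is of general type, and the Bogomolov--Miyaoka--Yau inequality $K^2\le 3c_2$ gives $4(k+1)+m\le 6(k+1)-3m$, that is, $4m\le 2(k+1)$. This does not exclude small $m$.

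This is the main obstacle. My fallback is the Seiberg--Witten/symplectic argument. The complex surface is K\"ahler, so its canonical class is, up to sign, a Seiberg--Witten basic class. On the other hand, $Z$ with our symplectic form $\omega$ is minimal of Kodaira dimension two, by Theorem~\ref{thm:min} and the corollary following it. By Taubes, a minimal symplectic manifold with $b_2^+>1$ has SW basic classes only $\pm K_\omega$. Since $b_2^+=k\ge3$, the smooth invariants do not depend on a chamber. For a non-minimal K\"ahler surface the basic classes include $K_{\min}\pm E$ for each exceptional class $E$. These have different squares, $K^2_{\min}-1$ versus $K^2_{\min}+1$, so the basic classes would not all have the same square. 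In our $Z$, however, all basic classes are $\pm K_\omega$ with square $4(k+1)$, which forces $m=0$. Hence $Z$ is minimal of general type with the stated invariants.

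Finally, the lattice statement needs no complex input. It is item (4) of Theorem~\ref{thm:lattice}, applied through Corollary~\ref{cor:pi1}, since $\pi_1(\Sigma_G)\times\mathbb Z\hookrightarrow\pi_1(Z)$ for every direct sum. It is worth remarking that $K^2=2c_2$ does not contradict the bidisk equality $K^2=2c_2$. For that reason the group-theoretic obstruction is the part of the argument that excludes the irreducible bidisk quotient.
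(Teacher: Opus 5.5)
You correctly compute $q=0$, $p_g=(k-1)/2$ (from $b_2^+=2p_g+1$), $c_2$, $K^2$ and $\chi(\mathcal O_Z)$. Your argument that the minimal model of the complex surface is of general type is also correct, and so is your treatment of the lattice statement via Theorem~\ref{thm:lattice}. The gap is the minimality of the complex structure itself. Your Seiberg--Witten fallback does not establish it, because it rests on two false claims.

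First, Taubes does not prove that a minimal symplectic four-manifold with $b_2^+>1$ has only $\pm K_\omega$ as basic classes. He proves that $\pm K_\omega$ are basic, and that every basic class $\beta$ satisfies $|\beta\cdot[\omega]|\le K_\omega\cdot[\omega]$, with equality only for $\beta=\pm K_\omega$. The minimal elliptic surface $E(4)$, whose basic classes are $\pm 2F$ and $0$, already contradicts your claim. Second, since $K_{\min}\cdot E=0$, the classes $K_{\min}+E$ and $K_{\min}-E$ both have square $K_{\min}^2-1$. More generally, a K\"ahler surface with $b_2^+>1$ has simple type, so every basic class of $Z$ has square $2e+3\sigma=4(k+1)$, which is also $K_\omega^2$. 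Comparing squares can therefore never detect a blow-up.

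The missing ingredient is the theorem that for $b_2^+>1$ symplectic minimality implies smooth minimality, i.e.\ there is no smoothly embedded sphere of square $-1$. This is due to Li and Taubes; the paper invokes it in the form given by Hamilton--Kotschick. It removes exactly your concern about whether the K\"ahler form is deformation equivalent to $\omega$. By Theorem~\ref{thm:min} the symplectic form on $Z$ is minimal, and $b_2^+=k\ge3$, so $Z$ contains no smooth $(-1)$-sphere. Hence no complex structure on $Z$ has a $(-1)$-curve. With minimality in hand, your observation that a minimal surface of nonnegative Kodaira dimension with $K^2=4(k+1)>0$ is of general type finishes the argument, as in the paper.

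If you want to stay inside Seiberg--Witten theory, you would have to reprove that theorem:
\begin{enumerate}
\item Write $K_\omega=\pm K_{\min}\pm E_1\pm\cdots\pm E_m$.
\item Flip the sign of $E_1$ to get another basic class $K_\omega-2A$, where $A=\pm E_1$ and $A^2=K_\omega\cdot A=-1$.
\item Use Taubes' $\mathrm{SW}\Rightarrow\mathrm{Gr}$ to represent $A$ by an embedded $J$-holomorphic curve for generic $J$.
\item Extract from it a symplectic $(-1)$-sphere, contradicting Theorem~\ref{thm:min}.
\end{enumerate}
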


\begin{proof}
The rational cohomology assumption gives $b_1(Z)=0$, $e(Z)=2+2k$, and $\sigma(Z)=0$.  A compact complex surface with even first Betti number is K\"ahler \cite{BHPV}.  The manifolds constructed above are symplectically minimal by Theorem~\ref{thm:min}.  Since $k>1$ gives $b_2^+=k>1$, symplectic minimality is equivalent here to smooth minimality \cite{HamiltonKotschick}.  Therefore the complex surface is minimal as well.

It cannot be rational: a minimal rational surface has $b_2=1$ or $2$, whereas here $b_2=2k>2$.  It cannot be ruled over a positive-genus curve because $b_1=0$.  A minimal K\"ahler surface of Kodaira dimension zero or one has $K^2=0$, whereas
\[
K_Z^2=2e(Z)+3\sigma(Z)=4(k+1)>0.
\]
Thus $Z$ is of general type \cite{BHPV}.  Hodge decomposition gives $q=b_1/2=0$.  Noether's formula gives
\[
\chi(\mathcal O_Z)=\frac{K_Z^2+c_2(Z)}{12}
=\frac{4(k+1)+2(k+1)}{12}=\frac{k+1}{2}.
\]
Since $\chi(\mathcal O_Z)=1-q+p_g$, we obtain $p_g=(k-1)/2$.  The last assertion follows from Theorem~\ref{thm:lattice}.
\end{proof}

For $k=3$, any such complex surface would therefore have
\[
(p_g,q,c_2,K^2)=(1,0,8,16),
\]
and for $k=5$ it would have
\[
(p_g,q,c_2,K^2)=(2,0,12,24).
\]
The equality $K^2=2c_2$ is the numerical equality satisfied by bidisk quotients.  Our group-theoretic argument excludes the irreducible bidisk-lattice case for the direct sums.  It also excludes compact complex-hyperbolic uniformization, including cocompact Deligne--Mostow lattices, because the fundamental group contains $\mathbb Z^2$.  It does not exclude reducible bidisk quotients or complex surfaces with a different universal cover.
There is also an independent numerical obstruction to ball uniformization:
a compact complex two-ball quotient satisfies $c_1^2=3c_2$, whereas the
surfaces considered here would satisfy $c_1^2=2c_2$.

\subsection{Comparison with mixed surfaces isogenous to a product}
The word ``mixed'' has a different meaning in the theory of surfaces
isogenous to a product.  In Catanese's terminology \cite{Catanese}, a
mixed action is a finite group action on $C\times C$ with elements
interchanging the two factors, and the resulting complex surface is a
free quotient of the product.  The mixed sums in this paper are instead
symplectic gluings of complements of multisections.  No finite quotient
presentation or mixed action is built into the construction.  If a
direct mixed sum were diffeomorphic to a surface isogenous to a product,
its fundamental group would have to contain a finite-index product of
surface groups, and its numerical invariants would have to agree with
the quotient formulas.  The Bass--Serre decomposition obtained here
constrains, but does not establish or exclude, such a virtual product
structure.  No identification of the two constructions is asserted.

\section{What remains open}

Theorems~\ref{thm:classification} and~\ref{thm:param} settle the numerical geography of the direct cyclic construction: all types occur at odd $k$, and none occurs at even $k$.  Every admissible type has both a minimal symplectic realization with the required rational cohomology and a spin realization with free intersection lattice $kH$.  Changing the boundary gluing alone cannot produce a manifold with the integral homology of $\#_k(S^2\times S^2)$ or a simply connected manifold.  Proposition~\ref{prop:MV} leaves the covering defects visible in first homology, while boundary incompressibility gives
\[
\pi_1(\Sigma_G)\times\mathbb Z\hookrightarrow\pi_1(Z).
\]
Proposition~\ref{prop:complexhigher} also determines the numerical invariants of any orientation-compatible complex structure and excludes irreducible bidisk uniformization for the direct sums.

Three questions remain.

\begin{question}\label{q:smithnormal}
For the five $k=3$ types, and already for the three model cases $(4,4)$, $(3,3)$, and $(6,2)$ studied in detail above, which finite abelian groups occur as $H_1(Z;\mathbb Z)$?  In particular, what are the minimal possible orders in these three cases?  The quotient argument gives the lower divisors
\[
4,\qquad 3,\qquad 6,
\]
for $(4,4)$, $(3,3)$, and $(6,2)$, respectively; determining whether stronger divisibility is forced is part of the Smith-form problem.
\end{question}

For an equatorially product-framed gluing, Question~\ref{q:smithnormal} is the Smith problem for
\[
A_S=\begin{pmatrix}Q_{p,g}\\-Q_{q,h}S\end{pmatrix},
\qquad S\in\operatorname{Sp}(2G,\mathbb Z).
\]
When $A_S$ has full row rank, the order of its cokernel is the greatest common divisor of its maximal minors.  The explicit covering bases in Section~11 therefore reduce Question~\ref{q:smithnormal} to a concrete family of determinantal-divisor calculations.  This is a genuine difference from $k=1$, where the corresponding matrix is square and a single determinant controls the finite cokernel.  The present paper proves the universal divisibility $\operatorname{lcm}(p,q)\mid |H_1(Z;\mathbb Z)|$; stronger divisibility and sharpness depend on the maximal determinantal divisors.

\begin{question}\label{q:smoothfamilies}
For a fixed odd $k>1$ and a fixed numerical type, are there infinitely many pairwise nondiffeomorphic direct mixed sums with the rational cohomology of $\#_k(S^2\times S^2)$?
\end{question}

A positive answer would follow from a one-parameter family $S_m\in\operatorname{Sp}(2G,\mathbb Z)$ for which the maximal determinantal divisor of $A_{S_m}$ is unbounded.  The resulting first homology groups would have unbounded order, giving pairwise nondiffeomorphic manifolds after passage to a subsequence.  The transvection argument used when $k=1$ still gives one-parameter symplectic families, but in the rectangular case it does not control the common divisor of all maximal minors.  The existence of such families remains an integral problem.

\begin{question}\label{q:complex}
Do any of the $k=3$ or $k=5$ direct mixed sums admit orientation-compatible complex structures?
\end{question}

At the $k=3$ level such a surface would have
\[
(p_g,q,c_2,K^2)=(1,0,8,16),
\]
and at the $k=5$ level it would have
\[
(p_g,q,c_2,K^2)=(2,0,12,24).
\]
In both cases $K^2=2c_2$.  The subgroup $\pi_1(\Sigma_G)\times\mathbb Z$ rules out a torsion-free cocompact irreducible bidisk lattice, while its $\mathbb Z^2$ subgroup rules out compact complex-hyperbolic uniformization, including cocompact Deligne--Mostow lattices.  Reducible bidisk quotients are not excluded by these arguments.  Determining whether one of the mixed sums is homotopy equivalent, or diffeomorphic, to a surface isogenous to a product appears to require substantially finer information about its fundamental group than its abelianization.

A related direction is to modify the complements before gluing.  The direct construction cannot become simply connected, but Luttinger surgeries on Lagrangian tori in the complements may change both the boundary image in first homology and the nonabelian relations without changing Euler characteristic or signature.  Such a construction is not a direct mixed sum in the sense of this paper and belongs to a separate surgery problem.

\appendix
\section{Complete numerical list through \texorpdfstring{$k=15$}{k=15}}\label{app:table}

The following table gives the complete list of unordered numerical types through $k=15$.  It records the output of Theorem~\ref{thm:param} and answers the finite-range classification question posed in \cite{AkhmedovBraided}.  Each row records $(p,q;n;g,h;G)$.

\begin{center}
\footnotesize
\setlength{\LTleft}{0pt}
\setlength{\LTright}{0pt}
\begingroup
\hbadness=10000
\begin{longtable}{@{}c c >{\raggedright\arraybackslash}p{0.79\textwidth}@{}}
\toprule
$k$ & number & complete unordered types $(p,q;n;g,h;G)$\\
\midrule
\endfirsthead
\toprule
$k$ & number & complete unordered types $(p,q;n;g,h;G)$\\
\midrule
\endhead
1&3&
$(3,3;3;2,2;4)$,\quad $(4,2;4;2,3;5)$,\quad
$(3,2;6;3,4;7)$\\
\midrule
3&5&
$(4,4;4;2,2;5)$,\quad $(3,3;6;3,3;7)$,\quad
$(6,2;6;2,4;7)$,\newline
$(4,2;8;3,5;9)$,\quad $(3,2;12;5,7;13)$\\
\midrule
5&7&
$(5,5;5;2,2;6)$,\quad $(3,3;9;4,4;10)$,\quad
$(6,3;6;2,3;7)$,\newline
$(8,2;8;2,5;9)$,\quad $(5,2;10;3,6;11)$,\quad
$(4,2;12;4,7;13)$,\newline
$(3,2;18;7,10;19)$\\
\midrule
7&7&
$(6,6;6;2,2;7)$,\quad $(4,4;8;3,3;9)$,\quad
$(3,3;12;5,5;13)$,\newline
$(10,2;10;2,6;11)$,\quad $(6,2;12;3,7;13)$,\quad
$(4,2;16;5,9;17)$,\newline
$(3,2;24;9,13;25)$\\
\midrule
9&9&
$(7,7;7;2,2;8)$,\quad $(3,3;15;6,6;16)$,\quad
$(8,4;8;2,3;9)$,\newline
$(4,3;12;4,5;13)$,\quad $(9,3;9;2,4;10)$,\quad
$(12,2;12;2,7;13)$,\newline
$(7,2;14;3,8;15)$,\quad $(4,2;20;6,11;21)$,\quad
$(3,2;30;11,16;31)$\\
\midrule
11&11&
$(8,8;8;2,2;9)$,\quad $(5,5;10;3,3;11)$,\quad
$(4,4;12;4,4;13)$,\newline
$(3,3;18;7,7;19)$,\quad $(6,3;12;3,5;13)$,\quad
$(14,2;14;2,8;15)$,\newline
$(8,2;16;3,9;17)$,\quad $(6,2;18;4,10;19)$,\quad
$(5,2;20;5,11;21)$,\newline
$(4,2;24;7,13;25)$,\quad $(3,2;36;13,19;37)$\\
\midrule
13&10&
$(9,9;9;2,2;10)$,\quad $(3,3;21;8,8;22)$,\quad
$(10,5;10;2,3;11)$,\newline
$(6,4;12;3,4;13)$,\quad $(5,3;15;4,6;16)$,\quad
$(12,3;12;2,5;13)$,\newline
$(16,2;16;2,9;17)$,\quad $(9,2;18;3,10;19)$,\quad
$(4,2;28;8,15;29)$,\newline
$(3,2;42;15,22;43)$\\
\midrule
15&10&
$(10,10;10;2,2;11)$,\quad $(6,6;12;3,3;13)$,\quad
$(4,4;16;5,5;17)$,\newline
$(3,3;24;9,9;25)$,\quad $(12,4;12;2,4;13)$,\quad
$(18,2;18;2,10;19)$,\newline
$(10,2;20;3,11;21)$,\quad $(6,2;24;5,13;25)$,\quad
$(4,2;32;9,17;33)$,\newline
$(3,2;48;17,25;49)$\\
\bottomrule
\end{longtable}
\endgroup
\end{center}

\enlargethispage{3.5\baselineskip}
\section*{Acknowledgments}
We are grateful to Sai-Kee Yeung for his interest in this work and for very helpful discussions.  This paper continues the mixed-sum construction introduced in \cite{AkhmedovBraided} and uses the spin and cyclic-cover viewpoint developed further in \cite{AkhYeung}.

An LLM-based tool was used for limited assistance with computational preparation and grammar checking.  The authors checked the mathematical arguments and take full responsibility for the contents of the paper.

\end{document}